\documentclass[12pt,usenames,dvipsnames]{amsart}
\usepackage{mathpazo}
\usepackage{hyperref}
\usepackage{cleveref}
\usepackage{bm}
\usepackage{comment}
\usepackage[margin=1in]{geometry}
\usepackage{enumitem}
\usepackage{calrsfs}
\DeclareMathAlphabet{\pazocal}{OMS}{zplm}{m}{n}

\usepackage{graphicx}
\usepackage{tikz-cd}

\usepackage[cmtip, all]{xy}
\usepackage{array}

\usepackage{amssymb,amsmath,latexsym,amsthm}

\newcommand{\bZ}{\mathbb{Z}}
\newcommand{\pO}{\pazocal{O}}
\DeclareMathOperator{\Hom}{Hom}

\DeclareMathOperator{\SL}{SL}

\DeclareMathOperator{\GSp}{GSp}
\DeclareMathOperator{\GL}{GL}

\DeclareMathOperator{\Spec}{Spec}

\DeclareMathOperator{\Aut}{Aut}
\DeclareMathOperator{\Jac}{Jac}
\DeclareMathOperator{\Sp}{Sp}

\DeclareMathOperator{\val}{val}

\DeclareMathOperator{\Span}{span}
\DeclareMathOperator{\Out}{Out}
\DeclareMathOperator{\id}{id}
\DeclareMathOperator{\image}{im}

\DeclareMathOperator{\alg}{alg}

\DeclareMathOperator{\et}{\acute{e}t}

\newcommand{\nhatell}{\hat{n}^{(\ell)}}
\newcommand{\cB}{\mathcal{B}}
\newcommand{\pB}{\pazocal{B}}
\newcommand{\cG}{\mathcal{G}}
\newcommand{\cM}{\mathcal{M}}
\newcommand{\field}[1]{\mathbb{#1}}

\newcommand{\Q}{\field{Q}}

\newcommand{\Zl}{\field{Z}_\ell}
\newcommand{\Z}{\field{Z}}

\newcommand{\Kbar}{\overline{K}}

\newcommand{\R}{\field{R}}
\newcommand{\C}{\field{C}}

\newcommand{\ra}{\rightarrow}

\newcommand{\bs}{\backslash}

\newcommand{\cI}{\mathcal{I}}

\newcommand{\aab}[3]{\alpha_{#1} \wedge \alpha_{#2} \wedge \beta_{#3} }
\newcommand{\abb}[3]{\alpha_{#1} \wedge \beta_{#2} \wedge \beta_{#3} }
\newcommand{\bbb}[3]{\beta_{#1} \wedge \beta_{#2} \wedge \beta_{#3} }

\newcommand{\LT}{\mathbf{LT}}
\newcommand{\pL}{\pazocal{L}}
\newcommand{\pH}{\pazocal{H}}
\newcommand{\pF}{\pazocal{F}}

\newcommand{\LH}{L/H}

\newcommand{\CCt}{\C(\!(t)\!)}

\newcommand{\cD}{\mathcal{D}}
\newcommand{\cK}{\mathcal{K}}
\newcommand{\cX}{\mathcal{X}}

\newcommand{\fX}{\mathfrak{X}}

\newcommand{\cY}{\mathcal{Y}}

\newcommand{\fY}{\mathfrak{Y}}

\newcommand{\bG}{\mathbf{G}}

\DeclareMathOperator{\rank}{rank}

\DeclareMathOperator{\gr}{gr}
\DeclareMathOperator{\coker}{coker}

\newtheorem{theorem}{Theorem}[section]
\newtheorem{lemma}[theorem]{Lemma}
\newtheorem{proposition}[theorem]{Proposition}
\newtheorem{corollary}[theorem]{Corollary}
\newtheorem{example}[theorem]{Example}
\newtheorem*{theorem*}{Theorem}

\theoremstyle{definition}

\theoremstyle{remark}
\newtheorem{remark}[theorem]{Remark}

\newtheorem{definition}[theorem]{Definition}  

\numberwithin{equation}{section}
\numberwithin{table}{section}
\numberwithin{figure}{section}

\newcommand\blfootnote[1]{%
  \begingroup
  \renewcommand\thefootnote{}\footnote{#1}%
  \addtocounter{footnote}{-1}%
  \endgroup
}




\newcommand\classification[2][]{%
  \gdef\@classification{%
    \href{http://www.ams.org/msc/}%
{\textit{2020 Mathematics Subject Classification}} \ignorespaces#2\unskip}}

\title{The Ceresa class:  tropical, topological, and algebraic}

\author{Daniel Corey}
\email{corey@math.tu-berlin.de}
\address{Technische Universit\"at Berlin, Stra{\ss}e des 17 Juni 135, Berlin 10623, Germany.}

\author{Jordan Ellenberg}
\email{ellenber@math.wisc.edu}
\address{UW-Madison, Van Vleck Hall, 480 Lincoln Dr. Madison, WI 53706.}

\author{Wanlin Li}
\email{wanlinli@mit.edu}
\address{MIT, 77 Massachusetts Avenue, Cambridge, MA 02139-4307}

\classification{14T25 (primary), and 14H30, 15C50, 57K20 (secondary)}


\begin{document}
	
\maketitle

\begin{abstract}
 The {\em Ceresa cycle} is an algebraic cycle attached to a smooth algebraic curve with a marked point, which is trivial when the curve is hyperelliptic with a marked Weierstrass point. The image of the Ceresa cycle under a certain cycle class map provides a class in \'etale cohomology called the {\em Ceresa class}. Describing the Ceresa class explicitly for non-hyperelliptic curves is in general not easy.  We present a ``combinatorialization'' of this problem, explaining how to define a Ceresa class for a tropical algebraic curve, and also for a topological surface endowed with a multiset of commuting Dehn twists (where it is related to the Morita cocycle on the mapping class group). We explain how these are related to the Ceresa class of a smooth algebraic curve over $\mathbb{C}(\!(t)\!)$, and show that the Ceresa class in each of these settings is torsion. 
	
\end{abstract}

\section{Introduction}

\blfootnote{\textit{2020 Mathematics Subject Classification}: 14T25 (primary), and 14H30, 15C50, 57K20 (secondary) \\ 
\textit{Keywords}: algebraic cycles, hyperelliptic curves, mapping class group, monodromy,  tropical curves}
When $X$ is a smooth algebraic curve with a marked point over a field, there is a canonical algebraic $1$-cycle on the Jacobian of $X$ called the {\em Ceresa cycle}.  The Ceresa cycle is homologically trivial, but, as Ceresa showed in \cite{Ceresa}, it is not algebraically equivalent to zero for a very general curve of genus greater than $2$.  In some sense it is the simplest non-trivial canonical algebraic cycle ``beyond homology" and as such it has found itself relevant to many natural problems in the geometry of curves and their Jacobians \cite{hainmatsumoto, totaro:chow16, Zhang}.

In recent years, many useful notions in algebraic geometry, and especially in the geometry of algebraic curves, have been seen to carry over to the tropical context, where they become interesting combinatorial notions.  The motivation for the present paper is to understand whether the theory of the Ceresa cycle (or, more precisely, a cohomology class associated to that cycle) can be given a meaningful interpretation in the tropical setting.  In particular, since a tropical curve is just a graph with positive real lengths assigned to the edges and integer weights assigned to the vertices, the Ceresa cycle would be a combinatorial invariant of such a graph.  We define such an invariant in the present paper and begin to investigate its properties.  We show, for example, that the Ceresa class of any hyperelliptic graph is zero (in conformity with the classical case) but that the Ceresa class of the complete graph on four vertices with all edges of length $1$ is a nonzero class of order 16, see Proposition \ref{prop:hyperellipticCeresaTrivial} and Remark \ref{rmk:K4All1}, respectively.  Moreover, we show in Example~\ref{ex:K4} that the Ceresa class is nonzero for {\em every} tropical curve whose underlying graph is the complete graph on four vertices.

Our approach is to model a tropical curve with integral edge lengths as the tropicalization of a curve that degenerates to a stable curve.  We start by considering an algebraic family of smooth complex curves of genus $g$ over a punctured disc $D \bs *$, which degenerates to a stable curve over the central fiber $*$.  There are several lenses through which one can view such a degeneration.

\begin{itemize}
    \item[-] {\em Topology}: The family of complex genus-$g$ curves over $D \bs *$, considered as a manifold, is homotopic to a family of genus-$g$ surfaces fibered over the circle, which we can think of as obtained by taking $\Sigma_g \times [0,1]$ and identifying $\Sigma_g \times 0$ with $\Sigma_g \times 1$ via a diffeomorphism of $\Sigma_g$ defined up to homotopy, i.e., an element of the mapping class group.  The stable reduction implies that this mapping class is a {\em multitwist}; that is, product of integral powers of commuting Dehn twists.  Which twists they are can be read off the dual graph of the stable fiber at $*$, and which powers of each twist appear is determined by the multiplicity of the nodes in the degeneration.
    \item[-] {\em Tropical geometry}:  It is well known that a stable degeneration gives rise to a {\em tropical curve}, which is to say a vertex weighted metric graph; in this case, it will be the dual graph of the stable fiber, with edge-lengths determined by the multiplicity with which the family of curves strikes various boundary components of $\overline{\cM}_g$.
    \item[-] {\em Algebraic geometry over a local field}: We can also restrict the holomorphic family to an infinitesimal neighborhood of $*$, yielding an algebraic curve over $\CCt$ with stable reduction.
\end{itemize}

In each case, there is a certain combinatorial datum which describes the degeneration: in the first case, the mapping class; in the second case, the tropical curve itself; and in the third case the action of the (pro-cyclic) absolute Galois group of $\CCt$ on the \'{e}tale fundamental group of $X_{\overline{\CCt}}$ (or, as we shall see, just on the quotient of that fundamental group by the third term of its lower central series.)  These three data agree in a sense made precise in \S \S \ref{sec:connectionToArithmeticGeometry},  \ref{sec:connectionToTropicalGeometry}. 

The only one of these contexts in which there is a literal Ceresa class is the third one.  But we shall see that we can in fact define the Ceresa cycle directly from the combinatorial datum.  Thus we may now speak of the Ceresa class of a multitwist in the mapping class group, or the Ceresa class of a unipotent automorphism of the geometric \'{e}tale fundamental group of a curve over $\CCt$, or the Ceresa class of a tropical curve with integral edge lengths.  (In this last case, our definition should be compared with that proposed by Zharkov in \cite{zharkov}; see Remark~\ref{rem:zharkov} for some speculations about this.) We explain in \S\ref{subsec:tropicalCeresa} how to extend the definition to non-integral edge lengths.

The topological definition rests fundamentally on the Morita cocycle on the mapping class group~\cite{Morita} (an extension of the Johnson homomorphism).  For the algebraic story  we use in a crucial way the work of Hain and Matsumoto~\cite{hainmatsumoto} relating the Ceresa class in \'{e}tale cohomology (over any field $K$) to the Galois action on the $2$-nilpotent fundamental group.  Indeed, we could just as well have described this paper as being about the ``Morita class" rather than the ``Ceresa class" --- it is the $\ell$-adic Harris-Pulte theorem of Hain and Matsumoto~\cite[\S 8]{hainmatsumoto} that relates the Morita class in group cohomology with the image of the Ceresa cycle under the cycle class map.  

In fact, most of the proofs and theorems in the paper are carried out not in an algebro-geometric context but in the setting of the mapping class group, which seems to be the easiest to work with in practice.  In this context, the Ceresa class of a multitwist can be described quite simply.  Recall that an element of the mapping class group $\Gamma_g$ is {\em hyperelliptic} if it commutes with some hyperelliptic involution $\tau \in \Gamma_g$.  The Ceresa class of a multitwist $\gamma \in \Gamma_g$ is an obstruction to the existence of an element $\tilde{\tau}$ of $\Gamma_g$ which acts as $-I$ on homology (a hyperelliptic involution being an example of such a $\tilde{\tau}$) such that the commutator $[\tilde{\tau},\gamma]$ lies in the Johnson kernel.  In shorter terms, one might say a multitwist has Ceresa class zero if it is ``hyperelliptic up to the Johnson kernel." 

Our main theorem, Theorem~\ref{thm:nonLagrangianTorsion}, is that the Ceresa class we define is torsion for any multitwist (and thus for any tropical curve with integral edge lengths). The Ceresa cycle of a very general complex algebraic curve is known to be non-torsion modulo algebraic equivalence \cite[Theorem~3.2.1]{Top}. So in some sense, our theorem shows that the \'etale Ceresa class defined here is throwing away a lot of information about the algebraic cycle; this is not surprising, since as we shall see it is determined by purely numerical data about the degeneration of a curve in a $1$-dimensional family.  On the other hand, the Ceresa class is readily computable and implies nontriviality of the Ceresa cycle if it is nonzero.  One might make the following analogy; if $K$ is a discrete valuation ring and $P$ is a point on an elliptic curve $E/K$ with bad reduction, then knowledge of the Ceresa cycle is akin to identifying $P$, while knowledge of the Ceresa class is more like knowing which component of the N\'{e}ron fiber $P$ reduces to.   

The fact that our Ceresa class is readily computable is significant because there are few examples of specific curves where the Ceresa cycle or \'etale Ceresa class is known to be trivial or non-trivial. One such example is the Fermat quartic curve, whose Ceresa cycle was found to be not algebraically equivalent to 0 in \cite{HarrisFermat} using the construction of harmonic volume in \cite{Harris}. The \'etale Ceresa class was computed and determined to be nontrivial (in fact, non-torsion) for some examples in \cite[\S 3.4]{Top}. The result \cite[Theorem 1.1]{BLLS} exhibits an explicit curve over a number field whose \'etale Ceresa class is torsion, but does not determine its exact order or prove it to be nontrivial. It is an open problem whether there exist positive dimensional families of non-hyperelliptic curves with torsion Ceresa cycle modulo algebraic equivalence \cite[p.28]{Top}.

The paper is structured as follows.  In \S2 we define the Ceresa class of a multitwist.  In \S3 we explain the relation between the topological definition and the \'etale Ceresa class in algebraic geometry, and in \S4 we explain how the definition extends to a tropical curve with arbitrary real edge lengths.  In \S\S5-6 we prove Theorem~\ref{thm:nonLagrangianTorsion} and describe a finite group in which the tropical Ceresa class naturally lies, a group which might be thought of as a sort of tropical intermediate Jacobian.  Finally, in \S7, we compute the Ceresa classes of several low-genus graphs.  We close with a question: are there non-hyperelliptic tropical curves with Ceresa class zero?

\section{The topological Ceresa class}
	\label{sec:topologicalCeresa}
	
\subsection{The Mapping Class Group and the Symplectic Representation}
\label{subsec:mappingClassGroup}
In this subsection, we recall some basic facts about the mapping class group, see \cite{FarbMargalit} for a detailed treatment. 

Throughout the paper, let $\Sigma_g$ denote a closed genus $g$ surface and $\Sigma_g^1$  denote a genus $g$ surface with one puncture.  Let $\Gamma_g$ (resp. $\Gamma_g^1$) be the mapping class group of $\Sigma_g$ (resp. $\Sigma_g^1$), i.e., the group of isotopy classes of orientation-preserving diffeomorphisms of the surface, and $\Pi_g = \pi_1(\Sigma_g)$.  These groups fit into the Birman exact sequence: 
\begin{equation}
\label{eq:Birman}
1 \to \Pi_g \to \Gamma_g^1 \to \Gamma_g \to 1.
\end{equation}
Given a simple closed curve $a$ in $\Sigma_{g}$ (resp. $\Sigma_g^1$), denote by $T_a$ the left-handed \textit{Dehn twist} of $a$. A \textit{separating twist} is a Dehn twist $T_a$ where $a$ is a separating curve, and a \textit{bounding pair map} is $T_{a}T_{b}^{-1}$ where $a$ and $b$ are homologous non-separating, non-intersecting, simple closed curves.

The singular homology group $H_1(\Sigma_g^1,\mathbb{Z}) \cong H_1(\Sigma_g,\mathbb{Z})$, which we denote by $H$, has a symplectic structure given by the the algebraic intersection pairing $\hat{i}: H \wedge H \to \mathbb{Z}$. The action of $\Gamma_g$ (resp. $\Gamma_g^1$) on $H$ respects this pairing. This yields the symplectic representation of $\Gamma_g$ (resp. $\Gamma_g^1$), and we have the short exact sequence
\begin{equation}
\label{eq:symplecticRepresentation}
    1 \to \cI_g \text{ (resp. }\cI_g^1 \text{)} \to \Gamma_g \text{ (resp. }\Gamma_g^1 \text{)} \to \Sp(2g,\mathbb{Z}) \to 1 
\end{equation}
where $\cI_g$ (resp. $\cI_g^1$) is called the \textit{Torelli group}. By \cite{birman, powell}, the Torelli group is generated by separating twists and bounding pair maps.

The \textit{Johnson homomorphism} was introduced by Johnson in \cite{Johnson80} to study the action of the Torelli group on the third nilpotent quotient of a surface group. We provide the following characterization. Recall that for any symplectic free $\Z$-module $V$ with symplectic basis $\alpha_i,\beta_i$ ($i=1,\ldots,g$), the form
\begin{equation}
\label{eq:omegaV}
\omega_V= \left(\sum_{i=1}^g \alpha_i \wedge \beta_i\right).
 \end{equation}
does not depend on the choice of symplectic basis. When $V=H$, we simply write $\omega$ for this form. Set $L=\wedge^3H$, and view $H$ as a subgroup of $L$ via the embedding $h\mapsto \omega\wedge h$. The Johnson homomorphism for a once-punctured surface is a group homomorphism $J:\cI_{g}^1 \to L$; by the previous paragraph, it suffices to describe how $J$ operates on separating twists and bounding pair maps. If $T_a$ is a separating twist, then $J(T_a)=0$. Suppose $T_aT_{b}^{-1}$ is a bounding pair map. The curves $a$ and $b$ separate $\Sigma_{g}^1$ into two subsurfaces, let $S$ be the subsurface  which does not contain the puncture. The inclusion $S\hookrightarrow \Sigma_{g}^1$ induces an injective map $H_1(S,\Z) \to H$ which respects the symplectic forms on these spaces. Denote the image of this map by $W$. Then $\omega$ restricts to $\omega_W$ on $W$, and 
\begin{equation}
\label{eq:johnsonBPM}
    J(T_{a}T_{b}^{-1}) = \omega_{W}\wedge [a].
\end{equation}
The Johnson homomorphism for $\Sigma_{g}$ is a homomorphism $J:\cI_g\to L/H$ and operates on separating twists and bounding pair maps as above, except that $S$ may be either of the two subsurfaces cut off by $a$ and $b$.

\subsection{Construction of the Ceresa Class}\label{subsec:constructionCeresa}
In this section, we construct a class in $H^1(\Gamma_g,L/H)$ whose restriction to $\cI_g$
equals twice the Johnson homomorphism. By the work of Hain and Matsumoto \cite{hainmatsumoto}, this class with $\ell$-adic coefficients agrees with the universal Ceresa class over $\mathcal{M}_g$. We discuss this further in \S\ref{subsec:lAdicCeresa}. 

Let $F_{2g} = \pi_1(\Sigma_g^1)$, which is the rank-$2g$ free group, and 
\begin{equation*}
    F_{2g} = L^1F_{2g} \supseteq L^2F_{2g} \supseteq L^3F_{2g} \supseteq \ldots
\end{equation*}
be the lower central series of $F_{2g}$, i.e., $L^{k+1}F_{2g} = [F_{2g},L^kF_{2g}]$.  The $k$-th nilpotent quotient of $F_{2g}$ is $N_k = F_{2g} / L^{k} F_{2g}$. Note that $N_2\cong H$.
Set $\gr_L^k F_{2g} = L^kF_{2g}/L^{k+1}F_{2g}$, which is a central subgroup of $N_{k+1}$. We note that the $N_k$ and the $\gr_L^k F_{2g}$ are all characteristic quotients of $F_{2g}$ and thus carry natural actions of $\Aut(F_{2g})$; in particular, they carry actions of the mapping class group $\Gamma_g^1$.  What's more, the action of $\Aut(F_{2g})$ on $\gr_L^kF_{2g}$ factors through the natural homomorphism $\Aut(F_{2g})  \ra \GL(H)$.  

By \cite[Proposition~2.3]{Morita}, $\Aut(N_3)$ fits into an exact sequence of groups
\begin{equation}
\label{eq:N3Seq}
    1 \to \Hom(H,  \gr_L^2 F_{2g}) \xrightarrow{\phi} \Aut(N_3) \xrightarrow{p} \GL(H) \to 1. 
\end{equation}
Here, for any $f \in \Hom(H,\gr_L^2 F_{2g})$ the action of $\phi(f)$ on $N_3$ is to send $\gamma \in N_3$ to $\gamma f([\gamma])$, where $[\gamma]$ is the image of $\gamma$ under the natural projection to $H$. Because $\Hom(H,  \gr_L^2 F_{2g})$ is abelian, we write the group operation additively.  
The group $\Aut(N_3)$ acts on  $\Hom(H,  \gr_L^2F_{2g})$ by conjugation.

Let $\tau$ be an element of $\Aut(N_3)$ such that $p(\tau) = -I$.  Since $p(\tau)$ is central in $\GL(H)$, any commutator in $\Aut(N_3)$ of the form $[x,\tau]$ lies in $\Hom(H,  \gr_L^2F_{2g})$.  Define 
\begin{equation}
    \label{eq:muTauN3}
    \mu_{\tau}:\Aut(N_3) \to  \Hom(H, \gr_L^2 F_{2g}) \hspace{30 pt} x \mapsto [x,\tau]. 
\end{equation}

\begin{proposition}
\label{prop:muIndependentTau}
    The map $\mu_{\tau}$ is a crossed homomorphism, and its cohomology class 
    \begin{equation*}
        \mu \in H^1(\Aut(N_3), \Hom(H, \gr_L^2 F_{2g}))
    \end{equation*} 
    is independent of the choice of $\tau$.
\end{proposition}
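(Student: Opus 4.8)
The plan is to treat this as a purely cohomological statement about the extension
$1 \to A \xrightarrow{\phi} G \xrightarrow{p} \GL(H) \to 1$, where I abbreviate $G := \Aut(N_3)$ and $A := \Hom(H, \gr_L^2 F_{2g})$, with $G$ acting on $A$ by conjugation. Two structural facts will do all the work. First, since $A = \ker p$ is abelian, conjugation of $A$ by elements of $A$ is trivial, so the conjugation action of $G$ on $A$ factors through $G/A = \GL(H)$; this is what makes $A$ a genuine $G$-module and $H^1(G,A)$ meaningful. Second, $p(\tau) = -I$ is central in $\GL(H)$, so for every $x \in G$ one has $p([x,\tau]) = [p(x), -I] = I$, which is exactly what guarantees $\mu_\tau(x) = [x,\tau] \in \ker p = A$; thus $\mu_\tau$ is a well-defined map $G \to A$. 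Throughout I would fix the conventions $[a,b] = aba^{-1}b^{-1}$ and ${}^{x}a = xax^{-1}$ so that signs stay consistent.

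For the crossed-homomorphism property, I would expand $\mu_\tau(xy) = [xy,\tau]$ using the standard commutator identity $[xy,z] = {}^{x}[y,z]\cdot[x,z]$, giving $\mu_\tau(xy) = {}^{x}\mu_\tau(y)\cdot\mu_\tau(x)$. Rewriting the $G$-action on $A$ additively and using that $A$ is abelian, this is precisely the relation $\mu_\tau(xy) = \mu_\tau(x) + x\cdot\mu_\tau(y)$, i.e. $\mu_\tau$ is a crossed homomorphism. The only input needed here is that $\mu_\tau$ takes values in the abelian module $A$, which the centrality of $-I$ already supplied.

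For independence of $\tau$, I would let $\tau'$ be another element with $p(\tau') = -I$, so that $a_0 := \tau'\tau^{-1} \in \ker p = A$ and $\tau' = a_0\tau$. Expanding $\mu_{\tau'}(x) = [x, a_0\tau]$ with the identity $[x,yz] = [x,y]\cdot{}^{y}[x,z]$ yields $[x,a_0]\cdot{}^{a_0}[x,\tau]$. Now $[x,a_0] = {}^{x}a_0\cdot a_0^{-1}$ is, additively, the coboundary $x \mapsto x\cdot a_0 - a_0$ of the $0$-cochain $a_0$, while ${}^{a_0}[x,\tau] = [x,\tau] = \mu_\tau(x)$ because $[x,\tau] \in A$ and conjugation of the abelian group $A$ by $a_0 \in A$ is trivial. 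Hence $\mu_{\tau'} = \mu_\tau + \partial a_0$, so the two cocycles are cohomologous and the class $\mu$ is independent of $\tau$.

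I do not expect a serious obstacle: the content is the general fact that for an extension with abelian kernel $A$ and a fixed lift $\tau$ of a central element of the quotient, $x\mapsto[x,\tau]$ is a $1$-cocycle whose class depends only on the central element and not on the chosen lift. The one thing to watch carefully is the bookkeeping of conventions --- which slot of the commutator $\tau$ occupies, the left-versus-right conjugation action, and the matching sign in the coboundary --- so that the two commutator expansions combine with consistent signs. Fixing $[a,b]=aba^{-1}b^{-1}$ and ${}^{x}a=xax^{-1}$ once at the start makes both computations mechanical.
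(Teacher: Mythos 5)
Your proof is correct and follows essentially the same route as the paper's: the same commutator identities $[xy,\tau] = {}^{x}[y,\tau]\cdot[x,\tau]$ and $[x,t\tau] = [x,t]\cdot{}^{t}[x,\tau]$, the same decomposition $\tau' = t\tau$ with $t \in \ker p$, and the same use of the abelianness of $\ker p$ to identify the difference $\mu_{\tau'} - \mu_{\tau}$ with the coboundary $x \mapsto x\cdot t - t$. Your added care about conventions and about why $\mu_\tau$ lands in $\Hom(H,\gr_L^2 F_{2g})$ (centrality of $-I$) is a fine elaboration of what the paper leaves implicit, but not a different argument.
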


\begin{proof}
That $\mu_\tau$ is a crossed homomorphism follows from the fact that $[xy,\tau] = [y,\tau]^x[x,\tau]$, and hence
\begin{equation*}
    \mu_\tau(xy) = [xy,\tau] = \mu_\tau(x) +  x\cdot \mu_\tau(y).
\end{equation*}
Now suppose we had made a different choice $\tau'$; then $\tau' = t\tau$ for some $t\in  \ker p$. One checks, using the fact that $\ker p$ is abelian, that  
\begin{equation*}
    [x,t\tau] = t^x t^{-1} [x,\tau] 
\end{equation*}
which is to say that \begin{equation*}
    \mu_{\tau'}(x) = \mu_{\tau}(x) - t + x \cdot t  
\end{equation*}
so $\mu_{\tau'}$ is cohomologous to $\mu_\tau$, as claimed.
\end{proof}

The preimage of $\ker p$ under the natural morphism $\Gamma_g^1 \to \Aut(N_3)$ is the Torelli group $\cI_g^1$, and the restriction of this morphism to $\cI_g^1$ is the Johnson homomorphism.  By the work of Johnson \cite{Johnson80}, the homomorphism $J$ is not surjective onto $\Hom(H, \gr_L^2 F_{2g})$; rather, its image is the natural $\GSp(H)$-equivariant inclusion of $L = \wedge^3 H$ into 
\begin{equation*}
    \Hom(H, \gr_L^2 F_{2g}) = \Hom(H, \wedge^2 H).
\end{equation*}
We can thus inflate $\mu$ to $\Gamma_g^1$ to get a cohomology class $\mu \in H^1(\Gamma^1_g, L)$ represented by the cocycle 
\begin{equation}
\label{eq:muTauJohnson}
\mu_{\tau}(\gamma) = J([\gamma,\tau])
\end{equation}
where $\tau \in \Gamma_g^1$ acts on $H$ as $-I$. We say that $\tau$ is a \textit{hyperelliptic quasi-involution}; a hyperelliptic quasi-involution that is an honest involution is called a  \textit{hyperelliptic involution}.
Following Proposition~\ref{prop:muIndependentTau}, the class $\mu$ is defined independent of the choice of $\tau$.

\begin{proposition}
\label{prop:mu2k}
The class $\mu$ is the unique element in $H^1(\Gamma_g^1,L)$ whose restriction to $\cI_g^1$ is $2J \in H^1(\cI_g^1,L)$. 
\end{proposition}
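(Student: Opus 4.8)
The plan is to establish the two halves of the statement separately: that $\mu$ restricts to $2J$ on $\cI_g^1$, and that no other class in $H^1(\Gamma_g^1,L)$ does so. The first is a direct cocycle computation with the representative \eqref{eq:muTauJohnson}; the second is a cohomological injectivity statement that I would extract from the inflation--restriction sequence of \eqref{eq:symplecticRepresentation}.

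For the restriction, I would evaluate $\mu_\tau(\gamma)=J([\gamma,\tau])$ on an element $\gamma\in\cI_g^1$. Since $\cI_g^1$ is normal in $\Gamma_g^1$, both $\gamma$ and $\tau\gamma^{-1}\tau^{-1}$ lie in $\cI_g^1$, so I can expand $[\gamma,\tau]=\gamma\cdot(\tau\gamma^{-1}\tau^{-1})$ and use that $J$ is a homomorphism on the Torelli group to write $J([\gamma,\tau])=J(\gamma)+J(\tau\gamma^{-1}\tau^{-1})$. The Johnson homomorphism is $\Gamma_g^1$-equivariant, meaning $J(\phi\delta\phi^{-1})=p(\phi)\cdot J(\delta)$, and $p(\tau)=-I$ acts on $L=\wedge^3 H$ by $(-1)^3=-1$; hence $J(\tau\gamma^{-1}\tau^{-1})=-J(\gamma^{-1})=J(\gamma)$. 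Adding gives $\mu_\tau(\gamma)=2J(\gamma)$, so that $\mu|_{\cI_g^1}=2J$ as claimed. This settles existence, since $\mu$ is already represented by an honest crossed homomorphism.

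For uniqueness, I would exploit that $\cI_g^1$ acts trivially on $L$, so $H^1(\cI_g^1,L)=\Hom(\cI_g^1,L)$ and $L^{\cI_g^1}=L$. The five-term exact sequence attached to \eqref{eq:symplecticRepresentation} then begins $0\to H^1(\Sp(2g,\Z),L)\to H^1(\Gamma_g^1,L)\xrightarrow{\mathrm{res}}\Hom(\cI_g^1,L)^{\Sp(2g,\Z)}$, so the kernel of the restriction map is precisely the image of inflation, which is isomorphic to $H^1(\Sp(2g,\Z),L)$. Any two classes restricting to $2J$ differ by an element of this kernel, so the uniqueness assertion is equivalent to the vanishing $H^1(\Sp(2g,\Z),\wedge^3 H)=0$.

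The main obstacle is therefore exactly this vanishing. I would first record the quick structural input: because $-I\in\Sp(2g,\Z)$ is central and acts on $\wedge^3 H$ by $-1$, the center-kills principle forces $2\cdot H^1(\Sp(2g,\Z),\wedge^3 H)=0$. Concretely, for a crossed homomorphism $c$ one compares $c(A\cdot(-I))$ with $c((-I)\cdot A)$ to find $2c(A)=c(-I)-A\cdot c(-I)$, exhibiting $2c$ as the coboundary of $-c(-I)$. This already yields $H^1(\Sp(2g,\Z),\wedge^3 H\otimes\Q)=0$ and hence uniqueness after tensoring with $\Q$. Upgrading to the integral statement requires ruling out $2$-torsion in $H^1(\Sp(2g,\Z),\wedge^3 H)$, which I would obtain from the known computation of the low-degree cohomology of $\Sp(2g,\Z)$ in these coefficients; this integral torsion bookkeeping is the only genuinely delicate step, everything else being formal.
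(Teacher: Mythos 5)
Your first half is exactly the paper's argument: the paper's proof of the restriction statement is the same three-step computation $J([\gamma,\tau]) = J(\gamma) + J(\tau\gamma^{-1}\tau^{-1}) = J(\gamma) + \tau\cdot J(\gamma^{-1}) = 2J(\gamma)$, using normality of $\cI_g^1$, equivariance of $J$, and the fact that $\tau$ acts as $(-1)^3=-1$ on $L$. No issues there.

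For uniqueness the routes diverge, and here your proposal has a real hole. The paper disposes of uniqueness in one line by citing \cite[Proposition 5.5]{hainmatsumoto}. You instead unwind it: the inflation--restriction reduction (uniqueness $\Leftrightarrow$ injectivity of restriction $\Leftrightarrow$ $H^1(\Sp(2g,\Z),\wedge^3 H)=0$) is correct, and your center-kills computation correctly shows this group is annihilated by $2$. But the step you then defer --- that $H^1(\Sp(2g,\Z),\wedge^3 H)$ has no $2$-torsion --- is not ``bookkeeping''; it is the entire nontrivial content of the uniqueness claim, and it cannot be finessed: since $\mu$ exists and restricts to $2J$, uniqueness is \emph{equivalent} to this integral vanishing, so any complete proof must establish it. You neither prove it nor point to a precise source; the ``known computation of the low-degree cohomology of $\Sp(2g,\Z)$ in these coefficients'' that you invoke is, in effect, exactly the proposition of Hain--Matsumoto that the paper cites. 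So your proposal is a correct skeleton whose one missing vertebra coincides with the content of the paper's citation; to close it you would need either to cite \cite[Proposition 5.5]{hainmatsumoto} (or an equivalent integral computation) or to supply an argument killing $2$-torsion, which the center-kills trick alone cannot do.
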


\begin{proof}
Pick an element $\gamma \in \cI_g^1$ and fix a hyperelliptic quasi-involution $\tau\in \Gamma_g^1$. Because $\tau$ acts as $-I$ on $H$, it also acts as $-I$ on $\wedge^3H$. Therefore, we have
\begin{equation*}
 J([\gamma,\tau]) = J(\gamma)+  J(\tau\gamma^{-1} \tau^{-1}) = J(\gamma)+ \tau \cdot J(\gamma^{-1}) = 2J(\gamma). 
\end{equation*}
The uniqueness of $\mu$ follows from \cite[Proposition 5.5]{hainmatsumoto}.
\end{proof}	

Let $\nu \in H^1(\Gamma_g, L/H)$ denote the image of $\mu$ under the composition 
\begin{equation*}
    H^1(\Gamma_g^1,L) \rightarrow H^1(\Gamma_g^1,L/H) \cong  H^1(\Gamma_g,L/H).
\end{equation*}
The map $H^1(\Gamma_g,L/H) \to  H^1(\Gamma_g^1,L/H)$ is induced by restriction, and is an isomorphism by \cite[Proposition 10.3]{hainmatsumoto}. Similar to the once-punctured case, the class $\nu$ is represented by the cocycle $\gamma \mapsto J([\gamma,\tau])$ where $\tau\in \Gamma_g$ is a hyperelliptic quasi-involution.

\begin{definition}
\label{def:ceresaClass}
 The {\em Ceresa class} of $\gamma \in \Gamma_g$ (resp. $\Gamma_g^1$), denoted by $\nu(\gamma)$ (resp. $\mu(\gamma)$), is the restriction of $\nu$ (resp $\mu$) to the cyclic group generated by $\gamma$, viewed as a cohomology class in $H^1(\langle \gamma \rangle,L/H)$ (resp. $H^1(\langle \gamma \rangle,L)$).
\end{definition}

\begin{remark} 
\label{rem:topologicalToEllAdicCeresa}
The justification for this notation is the $\ell$-adic Harris-Pulte theorem of Hain and Matsumoto~\cite[\S\S 8,10]{hainmatsumoto}, which identifies the $\ell$-adic analogue of the classes $\mu,\nu$ defined above with the image of the Ceresa cycle under a cycle class map in \'{e}tale cohomology. We discuss this aspect in detail in \S\ref{subsec:lAdicCeresa}.
\end{remark}

The Ceresa class $\nu(\gamma)$ lies in $H^1(\langle \gamma \rangle, \LH) \cong L / ((\gamma -1)L+H)$. It is certainly trivial for any $\gamma$ which commutes with $\tau$, which is to say it is trivial for any $\gamma$ in the {\em hyperelliptic mapping class group}. 
But the converse is not true; for instance, if $\gamma$ is in the kernel of the Johnson homomorphism, then so is $[\gamma,\tau]$, so $\mu(\gamma) = J([\gamma,\tau]) = 0$; but such a $\gamma$ certainly need not be hyperelliptic. More generally, any mapping class whose commutator with $\tau$ lands in the Johnson kernel has trivial Ceresa class.

When the action of $\gamma$ on $H$ has no eigenvalues that are roots of unity, the group\\ $H^1(\langle \gamma \rangle, \LH)$ is finite and the Ceresa class is torsion.  At the other extreme, if $\gamma$ lies in the Torelli group,  $H^1(\langle \gamma \rangle, \LH) = \LH$ and the Ceresa class is an element of this free $\Z$-module of positive rank and can be of infinite order.

The case of primary interest in the present paper is that where $\gamma$ is a product of commuting Dehn twists, or a {\em positive  multitwist}.  In this case, the action of $\gamma$ on $H$ is unipotent, and $H^1(\langle \gamma \rangle, \LH)$ is infinite; however, in this case, as we shall prove in \S\ref{sec:ceresaClassMultitwist}, the Ceresa class is still of finite order.

\begin{theorem*}[Theorem \ref{thm:nonLagrangianTorsion}]
\label{th:torsion}
Let $\gamma \in \Gamma_g$ be a positive multitwist. Then the Ceresa class $\nu(\gamma)$ is torsion.
\end{theorem*}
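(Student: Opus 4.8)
The plan is to reduce the statement to a purely linear-algebraic fact about the nilpotent operator $N := \gamma - 1$. Since $\gamma$ is a positive multitwist, its image in $\Sp(2g,\Z)$ is unipotent, so $\langle \gamma\rangle \cong \Z$ and $H^1(\langle\gamma\rangle, L/H) \cong \coker\!\big(N\colon L/H \to L/H\big)$, with $\nu(\gamma)$ corresponding to the residue of $J([\gamma,\tau]) \bmod H$. An element of this cokernel is torsion exactly when it dies in $\coker(N)\otimes\Q$, so it suffices to prove $J([\gamma,\tau]) \in N L_\Q + H_\Q$. First I would record the geometry of $N$ on $H$. Writing $c_1,\dots,c_r$ for a basis of the span of the vanishing cycles, the classes $[c_i]$ span an isotropic subspace $V$ (the twisting curves are disjoint, so $\hat i([c_i],[c_j])=0$); the operator $N$ annihilates $V$ and a symplectic complement $U$, and carries a complementary isotropic $D$ onto $V$. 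In particular $N^2=0$ on $H$.

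Next I would introduce the monodromy weight filtration $M_\bullet$ of $N$, for which $V$, $U$, $D$ have weights $-1$, $0$, $+1$; then $N$ lowers weight by $2$ on $H$, hence (acting as a derivation) on $L=\wedge^3H$, and likewise on the $N$-submodule $\omega\wedge H\cong H$ since $N\omega=0$. By Jacobson--Morozov I would complete $N$ to an $\mathfrak{sl}_2$-triple, so that $L/H$ decomposes into irreducibles with the weight filtration equal to the $h$-grading. On each irreducible, $N$ is surjective onto every graded piece except the top one, so $\coker(N$ on $L/H)$ is supported in weights $\ge 0$; equivalently, every element of $M_{-1}(L/H)$ (weight $\le -1$) lies in $N(L/H)_\Q$. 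Thus it is enough to show that the top-weight part of $J([\gamma,\tau])$ — its image in $\gr^{M}_{w}(L/H)$ modulo $N\gr^{M}_{w+2}$ for $w\ge 0$ — vanishes.

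To control this, I would compute $J([\gamma,\tau])$ directly from the vanishing cycles. Because $\tau$ acts as $-I$, conjugation sends $T_{c_i}$ to the Dehn twist about $\tau(c_i)$, and $\tau(c_i)$ is homologous to $c_i$ after reorientation; hence $[\gamma,\tau]$ is assembled from bounding-pair maps built out of the $c_i$ and $\tau(c_i)$, whose homology classes all lie in the weight-$(-1)$ space $V$. In the model situation where these curves are mutually disjoint, feeding the bounding-pair formula $J(T_aT_b^{-1})=\omega_W\wedge[a]$ into the product expresses $J([\gamma,\tau])$ as $\sum_i n_i\,\omega_{W_i}\wedge[c_i]$ with each $[c_i]$ of weight $-1$. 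Since each $\omega_{W_i}$ is a sum of symplectically dual pairs, of weight $0$, every summand has weight $-1$, so $J([\gamma,\tau])\in M_{-1}L$ and we finish by the previous paragraph.

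The main obstacle is exactly this last step, where two points must be handled honestly. First, $\tau(c_i)$ need not be disjoint from the remaining $c_j$, so $[\gamma,\tau]$ is not literally a commuting product of bounding-pair maps; I would deal with this either by choosing the hyperelliptic quasi-involution $\tau$ adapted to the configuration — legitimate, since $\nu(\gamma)$ is independent of $\tau$ — or by computing $[\gamma,\tau]$ directly inside $\Aut(N_3)$ via the extension \eqref{eq:N3Seq}. Second, and more seriously, the forms $\omega_{W_i}$ are pure of weight $0$ only when the subsurface homology $W_i$ is compatible with the weight filtration; when $V$ is \emph{not} Lagrangian, the $W_i$ can acquire weight-$(+1)$ directions, and then the naive weight count breaks down and one must instead show that the resulting weight-$\ge 0$ contributions cancel in $\coker(N)$. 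Proving that the top-weight part of $J([\gamma,\tau])$ always lands in the image of $N$ is the technical heart of the theorem, and this is where I expect the $\Sp$-representation theory together with the fine structure of the monodromy weight filtration to carry the argument.
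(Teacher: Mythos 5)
Your reduction of the theorem to linear algebra is correct as far as it goes: torsionness of $\nu(\gamma)$ is equivalent to $J([\gamma,\tau])\in (\delta_\gamma-I)L_{\Q}+H_{\Q}$, and your Jacobson--Morozov argument correctly shows that every class of negative weight for the monodromy filtration of $N=\delta_\gamma-I$ lies rationally in $\image N$. That half is a legitimate (and arguably cleaner) substitute for the explicit cokernel computations in the paper (Lemma \ref{lem:GradedSurjective}, Proposition \ref{prop:AdeltaBdeltaFinite}, Lemma \ref{lem:pFFinite}); indeed the subgroups in which the paper places $J([\gamma,\tau])$ --- $F_2^V L$ in the Lagrangian case and $\pF=\bigcap_{I,J}F_2^{V_{IJ}}L$ in general --- sit (rationally) inside the negative-weight part, so the two rationality mechanisms are compatible.

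The gap is the other half, and it is the entire content of the theorem: you never prove that $J([\gamma,\tau])$, for some admissible choice of hyperelliptic quasi-involution $\tau$, has negative weight, nor that its weight-$\geq 0$ components lie in $\image N$. Your sketch of this step relies on (i) writing $[\gamma,\tau]$ as a commuting product of bounding-pair maps, which requires each $\tau(c_i)$ to be disjoint from the other $c_j$ and generally fails, and (ii) purity of the forms $\omega_{W_i}$, which, as you yourself note, is false when the subsurface homologies are not compatible with the weight filtration --- exactly the non-Lagrangian situation. Your closing sentence defers the resolution to ``$\Sp$-representation theory together with the fine structure of the monodromy weight filtration,'' but no such argument is given, and it cannot be a purely formal one: the desired containment depends on which $\tau$ is used, so some genuine topological input is unavoidable. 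The paper supplies that input by a quite different route: it chooses $\tau$ inside handlebody mapping class groups, so that $[\gamma,\tau]$ lies in the Luft--Torelli group $\LT_g^1(V)$, then invokes Pitsch's generation theorem (Theorem \ref{thm:LuftTorelli}) and the computation of the Johnson image of a contractible bounding pair map (Proposition \ref{prop:LuftTorelliF2}) to conclude $J([\gamma,\tau])\in F_2^V L$ (Theorem \ref{thm:LagrangianTorsion}); in the non-Lagrangian case it runs this argument simultaneously for a family of handlebodies $V_{IJ}$ built from an adapted symplectic basis, obtaining $J([\gamma,\tau])\in\pF$ (Lemma \ref{lem:tauInVIJ}) before finishing with the rational surjectivity (Lemma \ref{lem:pFFinite}). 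Until you prove a statement of this kind, your proposal establishes only the (correct) conditional: \emph{if} the Johnson image of the commutator lies in negative weight, then the Ceresa class is torsion.
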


In fact, we will show how the order of the Ceresa class can be explicitly computed, though the computation is somewhat onerous.  We will, along the way, prove the analogous statement for multitwists in the punctured mapping class group $\Gamma_g^1$ as well.

Our method for proving Theorem~\ref{th:torsion} will be to show that the Ceresa class lies in a canonical finite subgroup of $H^1(\langle \gamma \rangle, \LH)$, which subgroup we might think of as the tropical intermediate Jacobian.  A notion of tropical intermediate Jacobian was proposed by Mikhalkin and Zharkov in \cite{mikhalkinzharkov}; it would be interesting to know whether the two notions agree in the context considered here.

The next two sections will explain the relationship between the topological, tropical, and local-algebraic pictures; the reader whose interest is solely in the mapping class group can skip ahead to \S\ref{sec-algebra-wedge-H}.

\section{The $\ell$-adic Ceresa class}
\label{sec:connectionToArithmeticGeometry}

In the previous section, we defined the Ceresa class $\nu(\gamma)$ (resp. $\mu(\gamma)$) as a cohomological invariant of any element $\gamma$ in the mapping class group $\Gamma_g$ (resp. $\Gamma_g^1$).  In this section, we discuss how the Ceresa class of multitwists arise in arithmetic geometry. We begin by recalling the monodromy action associated to a 1-parameter family of genus $g$ surfaces degenerating to a stable curve.

\subsection{Monodromy} 
\label{subsec:Monodromy}
Our discussion on the monodromy of a degenerating family of stable curves mainly follows \cite[\S3.2]{AminiBlochBurgosFresan} and \cite[\S1.1]{AsadaMatsumotoOda}. For details on the construction of a local universal family of a stable curve, see \cite[\S3]{TsuchiyaUenoKenji}. Our goal is to recall the non-abelian Picard--Lefschetz formula \cite[Theorem~2.2]{AsadaMatsumotoOda} which says that the monodromy action on the fundamental group of a smooth fiber is given by a multitwist. 

Let $X_0$ be a stable complex curve of genus $g\geq 2$. Let $\cY \to \cD$ be the local universal family for $X_0$ as was constructed in \cite[Theorem~3.1.5]{TsuchiyaUenoKenji}. The base $\cD$ is homeomorphic to $D^{3g-3}$ where $D$ denotes a small open complex disc centered at $0$.  Let $\cB\subset \cD$ be the discriminant locus of $\cY\to \cD$ and $\cD^* =  \cD\setminus \cB$; each fiber $\cY_p$ for $p\in \cD^*$ is diffeomeorphic to a closed  surface $\Sigma_g$. Choose a point $p_0\in \cD^*$ sufficiently close to $0$ at which all loops in $\cD^*$ will be based when we consider its fundamental group. 

The combinatorial data of a stable curve is recorded in its \textit{dual graph}, which is a connected vertex-weighted graph  defined in the following way. Recall that a vertex-weighted graph $\bG$ is a connected graph $G$, possibly with loops or multiple edges, together with a nonnegative integer $w_v$ for each vertex $v$ of $G$. The dual graph $\bG$ of $X_0$ consists of
\begin{itemize}
    \item[-] a vertex $v_{C}$ for each irreducible component $C$ of $X_{0}$
    whose weight is the arithmetic genus of $C$, and 
    \item[-] an edge $e$ between $v_C$ and $v_{C'}$ for each node $n$ in the intersection of $C$ and $C'$.  
\end{itemize}
 For each edge $e_i$ of $\bG$, choose a small loop $\ell_i'$ in the  smooth locus of $X_0$ that goes around $n_i$.  Shrinking $\cD$ if necessary, the inclusion $X_0\hookrightarrow \cY$ admits a retraction $\cY \to X_0$ so that $\cY\to X_0 \to \cY$ is homotopic to the identity. This gives rise to a specialization map $r_p:\cY_p \to X_0$ for each fiber $\cY_p$ over $p \in \cD$. 
Then each $\ell_i = r_p^{-1}(\ell_i')$ defines a closed curve in $\cY_p$.

The discriminant locus $\cB$ is a normal crossings divisor \cite[Proposition~1.1]{AsadaMatsumotoOda} (see also \cite[Theorem~2.7]{Knudsen}). Following \cite[Proposition~1.1(3)]{AsadaMatsumotoOda}, choose coordinates $z_1,\ldots,z_{3g-3}$ on $\cD$ so that $B_i$, the irreducible component of $\cB$ consisting of those $p\in \cD$ such that $\ell_i$ is contractible in  $\cY_p$, has the form 
 $B_i = \{z_i=0\} \cap \cD$.  In particular,  $\cY_0 = X_0$. Then $\cD^*$ is homeomorphic to $(D^*)^N\times (D)^{3g-3-N}$
where $N$ is the number of edges of $G$ and $D^* = D\setminus 0$. Thus $\pi_1(\cD^*)$ is isomorphic to $\oplus_{i=1}^N\Z\cdot \lambda_i$ where $\lambda_i$ is a loop in $\cD^*$ based at $p_0$ that goes around $B_i$. The monodromy action on $\Pi_g$ is given by a non-abelian Picard-Lefschetz formula \cite[Theorem~2.2]{AsadaMatsumotoOda}:
\begin{equation}
\label{eq:nonAbelianMonodromy}
    \rho_{\cY}:\pi_1(\cD^*) \to \Out(\Pi_g) \hspace{30 pt} \lambda_i \mapsto T_{\ell_i}^{-1}.
\end{equation}
Let $\fY \to D$ be a 1-parameter degeneration such that $\fY_0 = X_0$ is the only singular fiber.  Suppose that the local equation in $\fY$ near the node $n_i$ corresponding to the edge $e_i$ of the dual graph is $xy=t^{c_i}$ where $t$ is the parameter on $D$ and $c_i \in \Z_{>0},i=1,\ldots,N$. The following proposition is a variant of \cite[Main~Lemma]{Oda95}.

\begin{proposition}
\label{prop:monodromy}
    The restriction of the monodromy map $\rho_{\cY}$ 
    to $\pi_1(D^*) = \Z\cdot \gamma$  is given by
    \begin{equation*}
    \rho_{\fY}:\pi_1(D^*) \to \Out(\Pi_g) \hspace{30 pt} 
    \gamma \mapsto \prod_{i=1}^N T_{\ell_i}^{-c_i}.
    \end{equation*} 
\end{proposition}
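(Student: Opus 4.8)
The plan is to realize the one-parameter degeneration $\fY \to D$ as a pullback of the local universal family $\cY \to \cD$ along a classifying map, and then read off the formula by functoriality of monodromy from the non-abelian Picard--Lefschetz formula \eqref{eq:nonAbelianMonodromy}. By the versality of $\cY \to \cD$ established in \cite[Theorem~3.1.5]{TsuchiyaUenoKenji}, there is a holomorphic map $f \colon D \to \cD$ (unique up to the relevant equivalence) together with an isomorphism $\fY \cong f^*\cY$ of families over $D$ restricting to the identity on the central fiber $X_0$. Shrinking $D$ if necessary, we may assume $f(0) = 0$, so that $f$ matches up the singular fibers.

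The key local computation pins down $f$ near each node. The miniversal deformation of an ordinary double point $xy = 0$ is $xy = s$ with $s$ the smoothing parameter, and with the coordinates $z_1,\ldots,z_{3g-3}$ chosen so that $B_i = \{z_i = 0\}\cap\cD$ is the locus over which $\ell_i$ becomes contractible, the coordinate $z_i$ restricts to the smoothing parameter of the node $n_i$. The hypothesis that $\fY$ has local equation $xy = t^{c_i}$ near $n_i$ therefore forces $z_i \circ f = t^{c_i} u_i$ for a holomorphic unit $u_i$ on $D$; in particular $f$ carries $D^*$ into $\cD^*$ and induces $f_* \colon \pi_1(D^*) \to \pi_1(\cD^*)$.

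Next I would compute $f_*(\gamma)$. Under the identification $\pi_1(\cD^*) \cong \oplus_{i=1}^N \Z\cdot\lambda_i$, the class $f_*(\gamma)$ is detected component by component by the winding number of $z_i \circ f$ about $0 \in D$; the directions indexed by $i > N$ lie in simply connected disc factors and contribute nothing. Since the unit $u_i$ does not affect the winding number, $z_i\circ f = t^{c_i}u_i$ winds exactly $c_i$ times, giving $f_*(\gamma) = \sum_{i=1}^N c_i\lambda_i$. Because the monodromy of a pullback family is the composite of $f_*$ with the monodromy of the original family, we conclude
\begin{equation*}
\rho_{\fY}(\gamma) = \rho_{\cY}(f_*(\gamma)) = \rho_{\cY}\!\left(\sum_{i=1}^N c_i\lambda_i\right) = \prod_{i=1}^N \rho_{\cY}(\lambda_i)^{c_i} = \prod_{i=1}^N T_{\ell_i}^{-c_i},
\end{equation*}
where the last equality is \eqref{eq:nonAbelianMonodromy}.

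The main obstacle I anticipate is the local normalization in the second paragraph: making rigorous that $z_i$ is genuinely the smoothing parameter of $n_i$ in $\cY \to \cD$, and that the equation $xy = t^{c_i}$ pins down $z_i\circ f$ up to a unit. This requires tracking the identification of $\cY \to \cD$ with a product of miniversal deformations of the nodes (times a deformation of the smooth part) and checking that the chosen $z_i$ are compatible with the normal-crossings description of $\cB$ in \cite[Proposition~1.1]{AsadaMatsumotoOda}. Once that bookkeeping is settled, the winding-number count and the functoriality of monodromy are routine, and the sign is inherited directly from the Picard--Lefschetz formula.
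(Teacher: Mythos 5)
Your proposal is correct and follows essentially the same route as the paper: realize $\fY \to D$ as the pullback of the universal family $\cY \to \cD$ along a classifying map, observe that the hypothesis $xy = t^{c_i}$ forces the $i$-th coordinate of that map to be $t^{c_i}$ times a unit (the paper writes this as $j(t) = (a_1t^{c_1},\ldots,a_Nt^{c_N},0,\ldots,0) + \text{higher order terms}$), deduce $\gamma \mapsto \sum_i c_i\lambda_i$ on fundamental groups, and conclude by the non-abelian Picard--Lefschetz formula. The only cosmetic difference is that you detect the coefficient $c_i$ by a winding-number argument where the paper composes with the orthogonal projection onto $B_i^{\perp}$; these are the same computation.
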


\begin{proof}
The map $\fY \to D$ is given by the pullback of $\cY\to \cD$ under a map $j:D\to \cD$, and $c_i$ is the multiplicity at which $j(D)$ intersects the divisor $B_i$. Explicitly, with $t$ being the local coordinate on $D$, the map $j$ is given by  
\begin{equation*}
j(t) = (a_1t^{c_1},\ldots,a_{N}t^{c_N},0,\ldots,0) + \text{higher order terms}
\end{equation*}
where $a_i\in \C^*$. Composing with the orthogonal projection to $B_i^{\perp} = \{z_k=0\, | \, k\neq i\}$ yields the map  $D\to B_i^{\perp}$ given by $t\mapsto t^{c_i}$, and therefore $\pi_{1}(D^*) \to \pi_1(B_i^{\perp}\setminus 0)$ is given by $\gamma \mapsto c_i\cdot \lambda_i$. The proposition now follows from Equation \eqref{eq:nonAbelianMonodromy}.
\end{proof}

\subsection{The $\ell$-adic Ceresa class for algebraic curves over $\CCt$}
\label{subsec:lAdicCeresa}

In this subsection, we recall the definition of the Ceresa cycle associated with an algebraic curve and its induced class in Galois cohomology, following \cite{hainmatsumoto}. Using comparison theorems, this class agrees with the topological Ceresa class with $\ell$-adic coefficients, justifying the definition of the Ceresa class $\nu$ in \S\ref{sec:topologicalCeresa}.

Let $K$ be a field of characteristic $0$, $G_K$ its absolute Galois group, and $\ell$ a fixed prime number.  Let $X$ be a smooth, complete, genus $g\geq 3$ curve over $K$. For the moment, suppose $X$ has a $K$-rational point $\xi$, which  yields  an embedding $\Phi_{\xi}:X\to \Jac(X)$.  Define algebraic cycles in $\text{CH}_1(\Jac(X))$ given by $X_{\xi}:=(\Phi_{\xi})_*(X)$ and $X_{\xi}^{-}:=(\iota)_*(X_{\xi})$ where $\iota$ is the inverse map on $\Jac(X)$. Because $\iota$ induces the identity map on singular cohomology groups $H_{2r}(\Jac(X),\Z)$ for all $r\ge 0$, the cycle $X_{\xi} - X_{\xi}^{-}$ is null-homologous.
Thus, the image of $X_{\xi} - X_{\xi}^{-}$ under the $\ell$-adic Abel-Jacobi map produces a Galois cohomology class 
\begin{equation*}
\mu^{(\ell)}(X,\xi) \in H^1(G_K, H_{\et}^{2g-3}((\Jac X)_{\Kbar},\, \Zl(g-1))).
\end{equation*}
Via Poincar\'e duality,
\begin{equation*}
H_{\et}^{2g-3}((\Jac X)_{\Kbar},\Z_{\ell}(g-1)) \cong H_{\et}^{3}((\Jac X)_{\Kbar},\Z_{\ell}(1))(-1) \cong 
(\wedge^3H_{\et}^{1}(X_{\Kbar},\Z_{\ell}(1)))(-1).
\end{equation*}
Let $H_{\Zl} = H^1_{\et}(X_{\Kbar}, \Zl(1))$,  $L_{\Zl} = (\wedge^3H_{\Zl})(-1)$, and $\omega\in \wedge^2H_{\Zl}$ the polarization. 

The map $h\mapsto \omega \wedge h$ yields an embedding $H_{\Zl} \hookrightarrow L_{\Zl}$. The \textit{$\ell$-adic Ceresa class}, denoted by $\nu^{(\ell)}(X)$, is the image of $\mu^{(\ell)}(X,\xi)$ under the map $H^1(G_K, L_{\Zl}) \to H^1(G_K, L_{\Zl}/H_{\Zl})$, where we view $\mu^{(\ell)}(X,\xi)$ as an element of $H^1(G_K, L_{\Zl})$. By \cite[\S10.4]{hainmatsumoto}, the class $\nu^{(\ell)}(X)$ only depends on the curve $X/K$ and can be defined when $X$ has no $K$-rational point. Hain and Matsumoto construct a universal characteristic class 

\begin{equation*}
    \nhatell \in H^1(\Out \Pi^{(\ell)}_g, L_{\Zl}/H_{\Zl})
\end{equation*}
which is the $\ell$-adic analog of $\nu$ defined in \S\ref{sec:topologicalCeresa}. 
The class $\nu$ with $\mathbb{Z}_\ell$ coefficient corresponds to $(\hat{\rho}_{X}^{(\ell)})^{*}(\nhatell)$ under the comparison map
\begin{equation*} 
H^1(\Gamma_g,L/H)\otimes \mathbb{Z}_{\ell} \xrightarrow{\sim} H^1_{\text{\'et}}(\mathcal{M}_g\otimes\overline{K},L_{\Zl}/H_{\Zl})
\end{equation*}
where
\begin{equation*}
\hat{\rho}_{X}^{(\ell)}:\pi_1(\mathcal{M}_g,X_{\overline{K}})\to \Out \pi_1^{(\ell)}(X_{\overline{K}})
\end{equation*}
is the universal monodromy representation. Let $n^{(\ell)}(X)\in H^1(G_K,L_{\Zl}/H_{\Zl})$ denote the pullback of $\nhatell$ under the natural action $G_K \to \Out \pi_1^{(\ell)}(X_{\overline{K}})$. The  $\ell$-adic Harris-Pulte theorem \cite[Theorem 10.5]{hainmatsumoto} asserts
\begin{equation}
\label{eq:unpointedComparison}
n^{(\ell)}(X) = \nu^{(\ell)}(X).
\end{equation}
We now show that the $\ell$-adic Ceresa class of a curve over $\CCt$ is torsion.  We obtain this by showing that the $\ell$-adic Ceresa class is, in a natural sense, the $\ell$-adic completion of the Ceresa class of product of Dehn twists attached to the curve in the previous section. This fact justifies calling that topologically-defined class "the Ceresa class," and  allows us to apply Theorem~\ref{thm:nonLagrangianTorsion} to the $\ell$-adic Ceresa class.

\begin{theorem}
\label{thm:elladicCeresa}
    Suppose $K=\CCt$ and $X$ is a smooth curve over $K$. The $\ell$-adic Ceresa class $\nu^{(\ell)}(X)$ is torsion. 
\end{theorem}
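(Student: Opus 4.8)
The plan is to reduce the statement to the topological Theorem~\ref{thm:nonLagrangianTorsion} by means of the $\ell$-adic Harris--Pulte theorem~\eqref{eq:unpointedComparison} together with the monodromy computation of Proposition~\ref{prop:monodromy}. For $K=\CCt$ the absolute Galois group $G_K\cong\widehat{\Z}$ is procyclic, topologically generated by a distinguished element $\sigma$ (acting on a compatible system of roots by $t^{1/m}\mapsto\zeta_m t^{1/m}$). Writing $M:=L_{\Zl}/H_{\Zl}$, which is a $\Zl$-module, procyclicity gives $H^1(G_K,M)\cong M/(\sigma-1)M$, and by \eqref{eq:unpointedComparison} the class $\nu^{(\ell)}(X)=n^{(\ell)}(X)$ is the pullback of $\nhatell$ along $G_K\to\Out\pi_1^{(\ell)}(X_{\overline K})$.

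First I would reduce to the case of stable reduction. By semistable reduction there is a finite extension $K'=\C(\!(t^{1/n})\!)/K$ of degree $n$ over which $X$ acquires stable reduction, and $G_{K'}=nG_K$ is the index-$n$ subgroup of $G_K$. Since the $\ell$-adic Ceresa class is functorial in the base field, restriction carries $\nu^{(\ell)}(X)$ to $\nu^{(\ell)}(X_{K'})$; as $\mathrm{cor}\circ\mathrm{res}$ is multiplication by $n$, if $\ell^k\,\nu^{(\ell)}(X_{K'})=0$ then $n\ell^k\,\nu^{(\ell)}(X)=0$, and writing $n=\ell^a u$ with $u\in\Zl^{\times}$ we conclude $\ell^{a+k}\,\nu^{(\ell)}(X)=0$. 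Thus it suffices to prove the theorem assuming $X$ has stable reduction.

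Under this assumption I would identify the Galois action. Realizing $X$ as the restriction to $\Spec\CCt$ of a one-parameter degeneration $\fY\to D$ to a stable curve $X_0$ as in \S\ref{subsec:Monodromy}, the \'etale--Betti comparison identifies $\pi_1^{(\ell)}(X_{\overline K})$ with the pro-$\ell$ completion of $\Pi_g$, under which $\sigma$ acts as the pro-$\ell$ completion of the geometric monodromy. By Proposition~\ref{prop:monodromy} the latter is the multitwist $\gamma=\prod_{i=1}^N T_{\ell_i}^{-c_i}$, so $G_K\to\Out\pi_1^{(\ell)}(X_{\overline K})$ is the pro-$\ell$ completion of $\pi_1(D^*)=\Z\cdot\gamma\to\Gamma_g\to\Out\Pi_g$. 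Transporting through the comparison isomorphism $H^1(\Gamma_g,L/H)\otimes\Zl\xrightarrow{\sim}H^1_{\et}(\cM_g\otimes\overline K,M)$, which matches $\nu\otimes\Zl$ with the pullback of the universal class $\nhatell$, the class $\nu^{(\ell)}(X)$ becomes the image of the topological Ceresa class $\nu(\gamma)\in H^1(\langle\gamma\rangle,L/H)$ under the $\Zl$-linear completion map $\coker(\gamma-1\mid L/H)\otimes\Zl\to\coker(\gamma-1\mid M)$. Since the restriction of $\nu$ to $\langle\gamma\rangle$ depends only on the cyclic subgroup $\langle\gamma\rangle=\langle\gamma^{-1}\rangle$, Theorem~\ref{thm:nonLagrangianTorsion} applied to the positive multitwist $\gamma^{-1}=\prod_i T_{\ell_i}^{c_i}$ shows that $\nu(\gamma)$ is torsion; its image $\nu^{(\ell)}(X)$ is then torsion as well.

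The main obstacle is the comparison step of the third paragraph: one must check carefully that the action of the distinguished generator of $\Gal(\overline{\CCt}/\CCt)$ on the pro-$\ell$ fundamental group coincides, after the \'etale--Betti identification, with the pro-$\ell$ completion of the geometric multitwist monodromy of Proposition~\ref{prop:monodromy} (including the matching of orientations and signs, so that the exponents $-c_i$ are correct), and that the Hain--Matsumoto universal classes $\nu$ and $\nhatell$ correspond under this identification and behave functorially under the base change $K'/K$. These are precisely the compatibilities furnished in principle by \cite{hainmatsumoto} and the framework of \S\ref{subsec:Monodromy}, and assembling them is the technical heart of the argument.
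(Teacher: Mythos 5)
Your proposal is correct and follows essentially the same route as the paper's proof: reduce to (semi)stable reduction by a degree-$n$ base change (your restriction--corestriction argument is just a repackaging of the paper's endomorphism $\phi^{\#}(t)=t^{n}$, which is likewise multiplication by $n$ on $H^1(G_K,L_{\Zl}/H_{\Zl})$), identify the Galois action with the multitwist monodromy of Proposition~\ref{prop:monodromy} via profinite/pro-$\ell$ completion, transport the universal class through the Hain--Matsumoto comparison, and invoke Theorem~\ref{thm:nonLagrangianTorsion}. The compatibilities you defer to your final paragraph are exactly what the paper supplies with its two commutative diagrams in the proof, and your explicit handling of the sign issue (applying the torsion theorem to the positive multitwist $\gamma^{-1}$, since $\langle\gamma\rangle=\langle\gamma^{-1}\rangle$) makes precise a step the paper leaves implicit.
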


\begin{proof}
We begin by reducing to the semistable reduction case. By the semistable reduction theorem, there is a positive integer  $n$ such that the pullback $X'$ of $X$ by the map
\begin{equation*}
    \phi:\Spec K \to \Spec K \hspace{20pt} \phi^{\#}(t) = t^n
\end{equation*}
has semistable reduction. The map $\phi$ induces an endomorphism of $H^1(G_K,L_{\Zl}/H_{\Zl})$ which is multiplication by $n$. This means that $\nu^{(\ell)}(X)$ is torsion if and only if $\nu^{(\ell)}(X')$ is torsion.

Therefore,  we may assume that $X$ has a semistable model $\fX$ defined over $\pO_K$ with special fiber $X_0$. The \'etale local equation in $\fX$ of each node of $X_0$ is $xy=t^{c_i}$ for some $c_i\in \Z_{>0}$. Let 
\begin{equation*}
    S = \Spec \C[\![x_1,\ldots,x_{3g-3}]\!], \hspace{20pt} S' = \Spec \C[\![x_1^{\pm},\ldots, x_{N}^{\pm}, x_{N+1}, \ldots x_{3g-3}]\!].
\end{equation*}
where $N$ is the number of nodes of $X_0$. The map $\fX \to \Spec \pO_K$ is the pullback of the (algebraic) local universal family $\cX \to S$  by a morphism $i: \Spec \pO_K \to S$ of the form
\begin{align*}
    i^{\#}(x_{k}) = a_kt^{c_k} + \text{higher order terms} \hspace{20pt} 
\end{align*}
where $a_k\in \C^*$. Define an analytic family $\fY \to D$ by the pullback of $\cY \to \cD$ under the map 
\begin{equation*}
    j:D\to \cD\hspace{20pt} t\mapsto (a_1t^{c_1},\ldots, a_N t^{c_N},0,\ldots,0).
\end{equation*}
Consider the following diagram.
\begin{equation}
\label{eq:monodromy}
\begin{tikzcd}
 & \pi_1(D^*) \arrow[r, "j_*"] \arrow[d] & \pi_{1}(\cD^*)  \arrow[r, "\rho_{\cY}"] \arrow[d] &   \Out(\Pi_g) \arrow[d] \\
 & G_K \arrow[r, "i_*"] & \pi_{1}^{\alg}(S') \arrow[r, "\rho_{\cX}"] & \Out(\Pi_g^{(\ell)})
\end{tikzcd}
\end{equation}
The left and middle vertical arrows are profinite completions, the right arrow is the $\ell$-adic completion, and $\rho_{\cX}$ is the monodromy map associated to $\cX \to S$. The left square commutes because $G_K\to \pi_1^{\alg}(S')$ is the profinite completion of $\pi_1(D^*) \to \pi_1(\cD^*)$, and the composition of the bottom two arrows may be identified with the natural action $G_K \to \Out(\Pi_g^{(\ell)})$. Commutativity of the right square follows from commutativity of the following diagram
\begin{equation*}
\begin{tikzcd}
 & 1 \arrow[r]& \Pi_g \arrow[r] \arrow[d] & \pi_{1}(\cY^*)  \arrow[r] \arrow[d] & \pi_1(\cD^*) \arrow[r] \arrow[d] & 1 \\
 & 1 \arrow[r]& \widehat{\Pi}_g \arrow[r] & \pi_{1}^{\alg}(\cX^*)  \arrow[r]  & \pi_1^{\alg}(S') \arrow[r]  & 1 
\end{tikzcd}
\end{equation*}
where $\cX^*$ and $\cY^*$ are the smooth loci of $\cX\to S$ and $\cY\to \cD$, respectively.  Here, the vertical arrows are profinite completions and the rows are exact. Let $\gamma \in \Out(\Pi_g)$ denote the image of the counterclockwise generator of $\pi_1(D^*)$ in $\Out(\Pi_g)$. Commutativity of the diagram in \eqref{eq:monodromy} yields the following commutative square
\begin{equation*}
\begin{tikzcd}
 &  H^1(\Out(\Pi_g), L/H) \arrow[r] \arrow[d] &   H^1(\langle \gamma \rangle, L/H) \arrow[d]  \\
 & H^1(\Out(\widehat{\Pi}_g), L_{\Zl}/H_{\Zl}) \arrow[r] &  H^1(G_K, L_{\Zl}/H_{\Zl})
\end{tikzcd}
\end{equation*}
Because the left arrow takes $\nu$ to $\hat{n}^{(\ell)}$, the right arrow takes $\nu(\gamma)$ to $\nu^{(\ell)}(X)$.  By Proposition \ref{prop:monodromy}, $\gamma$ acts as the multitwist $\prod_{i}T_{\ell_i}^{-c_i}$ on $\Pi_g$. The theorem  now follows from Theorem \ref{thm:nonLagrangianTorsion}.
\end{proof}

\begin{remark}
We are indebted to Daniel Litt for the observation that it ought to be possible to prove directly, via arguments on weights \cite{BettsLitt}, that the Ceresa class of a curve over $\CCt$ is trivial, and to derive the topological theorems in this paper from this algebraic fact using the fact that every multitwist can be modeled by an algebraic degeneration.  Our feeling is that modeling the paper this way could create the misleading impression that the topological statement was true for reasons involving hard theorems in algebraic geometry, while in fact, as we shall see, it is a topological fact with a topological proof.
\end{remark}

\section{The tropical Ceresa class}
\label{sec:connectionToTropicalGeometry}

\subsection{Tropical curves}
\label{subsec:tropicalCurves}
A \textit{tropical curve} $\Gamma$ consists of a vertex weighted graph $\bG$, together with a positive real-value $c_e$ associated to each edge $e$, recording its length.  The \textit{genus} of $\Gamma$ is 
\begin{equation*} 
g(\Gamma) = |E(G)| - |V(G)| + 1 +|w|
\end{equation*} 
where $G$ is the underlying graph of $\Gamma$, and $|w|$ is the sum of the vertex weights. This is consistent with the interpretation of a weight on a vertex as an ``infinitesimal loop.'' The \textit{valence} of a vertex $v$, denoted by $\val(v)$, is the number of half-edges adjacent to $v$; in particular, a loop edge contributes $2$ to the valence. 

Given an arrangement of simple, closed, non-intersecting curves $\Lambda = \{\ell_1,\ldots, \ell_N\}$ in $\Sigma_g$, its  \textit{dual graph} is the vertex weighted graph with:
\begin{itemize}
    \item[-] a vertex $v_S$ for each connected component $S$ of  $\Sigma_g\setminus \bigcup_{i=1}^N \ell_i$
    whose weight is the genus of $S$, and 
    \item[-] an edge $e_i$ between $v_S$ and $v_{S'}$ for each loop $\ell_i$ between $S$ and $S'$.  
\end{itemize}
Any vertex-weighted graph $\bG$ of genus $g$ may be realized as the dual graph to an arrangement of pairwise non-intersecting curves on $\Sigma_g$ in the following way. For  each vertex $v$, let $\Sigma_v$ be a genus-$w_v$ surface with $\val(v)$ boundary components.  For each edge $e$ of $\bG$ between the (not necessarily distinct) vertices $u$ and $v$, glue a boundary component of $\Sigma_u$ to a boundary component of $\Sigma_v$; denote the glued locus in the resulting surface by $\ell_e$. This process yields a genus-$g$ surface, together with  an arrangement of pairwise non-intersecting curves $\Lambda = \{\ell_e \, | \, e\in E(\Gamma)\}$  whose dual graph is $\bG$.  For an illustration, see Figure \ref{fig:graphToSurface}. 
If $\Gamma$ is a tropical curve with integral edge lengths $c_e$, then we have a canonical multitwist
\begin{equation*}
T_{\Gamma} = \prod_{e\in E(\Gamma)} T_{\ell_e}^{c_e}
\end{equation*}
\noindent and we let $\delta_{\Gamma}\in \Sp(H)$ denote the action of $T_{\Gamma}$ on $H$. At this point, one may define the Ceresa class of $\Gamma$ to be $\nu(T_{\Gamma})\in H^1(\langle T_{\Gamma} \rangle, L/H) = H^1(\langle \delta_{\Gamma} \rangle, L/H)$.  However, when the edge lengths of $\Gamma$ are not integral, we cannot define $\delta_{\Gamma}$ and $\nu(\Gamma)$ in terms of a multitwist. 
We will define the Ceresa class for a tropical curve and what it means for it to be trivial in \S\ref{subsec:tropicalCeresa}. 

\begin{figure}[tbh]
	\centering
	\includegraphics[height=35mm]{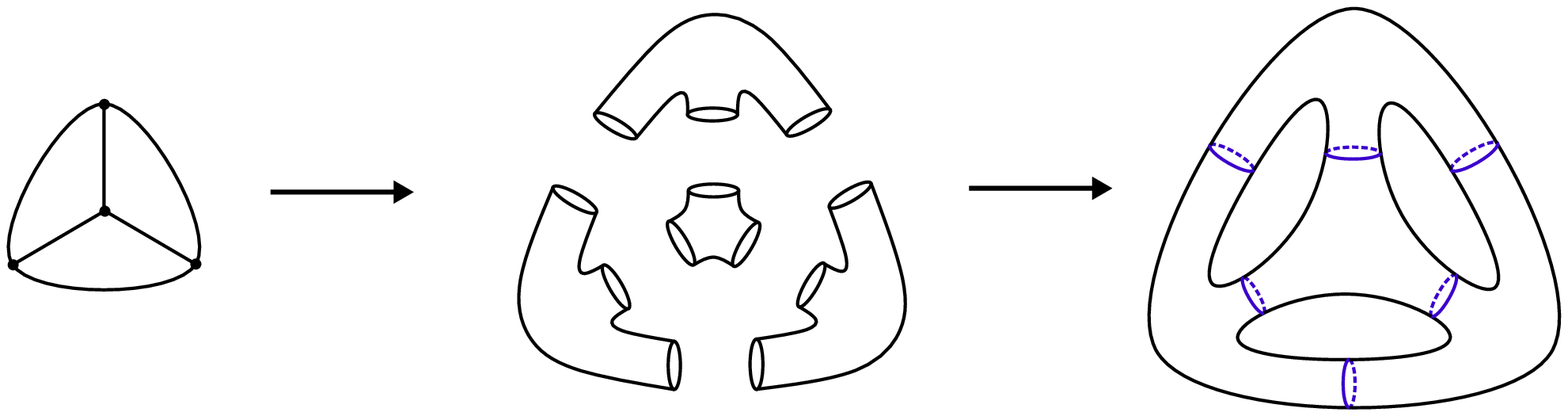}
	\caption{Passing from a tropical curve to a multitwist} \label{fig:graphToSurface}
\end{figure}

\subsection{The tropical Jacobian} 
\label{subsec:TropicalJacobian}
Now suppose $\Gamma$ has genus $g \geq 2$ and  fix an orientation on the underlying graph $G$.  Its \textit{Jacobian} is the real $g$ dimensional torus 
\begin{equation*}
    \Jac(\Gamma) = (H_1(G,\R) \oplus \R^{|w|})/(H_1(G,\bZ) \oplus \bZ^{|w|}) 
\end{equation*}
together with the semi-positive quadratic form $Q_{\Gamma}$ which vanishes on $\R^{|w|}$ and on $H_1(G,\R)$ is equal to 
\begin{equation*}
	Q_{\Gamma} \left( \sum_{e\in E(G)} \alpha_{e}\cdot e  \right) = \sum_{e\in E(G)} \alpha_e^2\cdot c_e. 
\end{equation*}
The form $Q_{\Gamma}$ is positive definite when  all vertex weights are $0$,  and $\det(Q_{\Gamma})$  is the \textit{first Symanzik polynomial} of of $G$  \cite[Proposition~2.9]{AminiBlochBurgosFresan}. That is,
\begin{equation}
\label{eq:SymanzikPolynomial}
    \det(Q_{\Gamma}) = \sum_{T} c_{T}, \hspace{15pt} \text{where} \hspace{15pt} c_T = \prod_{e\notin E(T)} c_e
\end{equation}
and the sum is taken over all spanning trees $T$ of $G$.

When $\Gamma$ has integral edge lengths,  $\delta_{\Gamma}$ and  $Q_{\Gamma}$ are related in the following way. First, embed $G$ into $\Sigma_g$ so that each vertex $v$ maps to a point in $\Sigma_v$, and each edge $e$ maps to a simple arc intersecting the loop $\ell_{e}$ exactly one time, and no other $\ell_{e'}$. This embedding, which we denote by  $\iota:G\hookrightarrow \Sigma_g$, induces an injective map on integral homology groups. Then 
\begin{equation}
\label{eq:deltaToQ}
Q_{\Gamma}(\gamma) = \hat{i}( [\iota(\gamma)], (\delta_{\Gamma}-I)[\iota(\gamma)]).
\end{equation}

Here is a more explicit description of the relationship between $Q_{\Gamma}$ and $\delta_{\Gamma}$.  Enumerate the edge set $E(G) = \{e_1,\ldots,e_N\}$ so that $E(G)\setminus \{e_1,\ldots,e_h\}$ are the edges of a spanning tree $T$.  The graph $T\cup \{e_i\}$ has a unique cycle, denote by $\gamma_i$ the image of this cycle under $\iota$.  The cycles $[\gamma_1],\ldots,[\gamma_h]$ form a basis for an isotropic subspace of $H$. Orient $\gamma_i$ and $\ell_{e_i}$ so that $\hat{i}([\gamma_i], [\ell_{e_i}]) = 1$, for $1\leq i\leq h$.
Setting $\alpha_i = [\gamma_{i}]$ and $\beta_i = [\ell_{e_i}]$ yields a symplectic basis of a symplectic subspace of $H$.  This extends to a symplectic basis $\alpha_1,\ldots,\alpha_g,\beta_1,\ldots,\beta_g$ on all of $H$, allowing us to identify $Q_{\Gamma}$ with a symmetric $g\times g$ matrix. Then
\begin{equation}
\label{eq:QTodelta}
\delta_{\Gamma} = \begin{pmatrix}
I & 0 \\
Q_{\Gamma} & I
\end{pmatrix}.
\end{equation}
In particular, we may identify $Q_{\Gamma}$ with the restriction $\delta_{\Gamma}-I:H/Y \to Y$, where $Y$ is the $\Z$-submodule of $H$ spanned by the $\beta_i$ for $i=1,\ldots,h$.

\subsection{The tropical Ceresa class}
\label{subsec:tropicalCeresa}
We saw in \S\ref{subsec:tropicalCurves} how one may define the Ceresa class of a tropical curve with integral edge lengths in terms of a multitwist. When the edge lengths of $\Gamma$ are not integral, then we do not have access to such a multitwist. 
Instead, we will define what it means for a tropical curve to be Ceresa trivial.

The kernel of the Johnson homomorphism, denoted by $\cK_g$, is a normal subgroup of $\Gamma_g$, which allows us to form the quotient $\cG_g = \Gamma_g/\cK_g$. This follows from the fact that $\cK_g$ is the kernel of the map $\Gamma_g \to \Out(N_3)$. Let $\bG$ be a vertex-weighted graph, and denote by $\pB(\bG)$ the subgroup of $\cG_g$ generated by the twists $T_{\ell_e}$ for $e\in E(\bG)$. This is a free $\Z$-module because  the $\ell_e$'s are non-intersecting, and it has rank $N-s$ where $s$ is the number of separating edges in $\bG$. Given a hyperelliptic quasi-involution $\tau \in \cG_g$, define
\begin{equation*} 
\pB_{\tau}(\bG) = \pB(\bG) \cap C_{\cG_g}(\tau) \leq \pB(\bG)
\end{equation*} 
where $C_{\cG_g}(\tau)$ denotes the centralizer of $\tau$ in $\cG_g$.  Let
\begin{equation*}
\nu(\Gamma) = \sum_{e\in E(\bG)} c_e T_e \in \pB(\bG)_{\R}.
\end{equation*}
We say that $\Gamma$ is \textit{Ceresa trivial} if there exists a hyperelliptic quasi-involution $\tau$ such that  $\nu(\Gamma) \in \pB_{\tau}(\bG)_{\R}$. Proposition \ref{prop:tropicalCeresaIntegralEdge} below shows that this notion agrees with the Ceresa class associated to the multitwist $T_\Gamma$ being trivial in the case where $\Gamma$ has integral edge length, but first we will need the following Lemma.

\begin{lemma}
\label{lem:ceresaTrivialRep}
The Ceresa class $\nu(T_{\Gamma})$ is trivial if and only if there is a hyperelliptic quasi-involution $\tau$ such that $J([T_{\Gamma},\tau]) = 0$. 
\end{lemma}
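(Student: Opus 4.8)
The plan is to trade the vanishing of the \emph{cohomology class} for the literal vanishing of a single \emph{cocycle value}, using the change-of-$\tau$ computation from Proposition~\ref{prop:muIndependentTau} together with the surjectivity of the Johnson homomorphism. First I would record the ambient cohomology: since $T_{\Gamma}$ is a multitwist, its action $\delta_{\Gamma}$ on $H$ is unipotent and $T_{\Gamma}$ has infinite order, so $\langle T_{\Gamma}\rangle \cong \Z$ and $H^1(\langle T_{\Gamma}\rangle, \LH) \cong \LH/(\delta_{\Gamma}-1)\LH$. A $1$-cocycle on $\langle T_{\Gamma}\rangle$ is determined by its value on the generator $T_{\Gamma}$, and is a coboundary precisely when that value lies in $(\delta_{\Gamma}-1)\LH$. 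For any hyperelliptic quasi-involution $\tau$, the commutator $[T_{\Gamma},\tau]$ acts trivially on $H$ (as $-I$ is central), hence lies in $\cI_g$, so $J$ applies to it and $\nu(T_{\Gamma})$ is represented by the cocycle $T_{\Gamma}\mapsto J([T_{\Gamma},\tau])$.

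The reverse implication is then immediate: if some $\tau$ satisfies $J([T_{\Gamma},\tau])=0$, the representing cocycle vanishes on the generator and is therefore identically zero, so $\nu(T_{\Gamma})=0$.

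For the forward implication, assume $\nu(T_{\Gamma})=0$ and fix any hyperelliptic quasi-involution $\tau$; by the first paragraph $J([T_{\Gamma},\tau]) = (\delta_{\Gamma}-1)v$ for some $v \in \LH$. I would then adjust $\tau$ within its freedom by a Torelli element: set $\tau' = t\tau$ with $t \in \cI_g$. Since $t$ acts trivially and $\tau$ acts as $-I$ on $H$, the product $\tau'$ is again a hyperelliptic quasi-involution. The change-of-$\tau$ computation in the proof of Proposition~\ref{prop:muIndependentTau}, applied to the $\LH$-valued cocycle $\gamma\mapsto J([\gamma,\tau])$ with $t$ identified with $J(t)\in\LH$, gives $J([T_{\Gamma},\tau']) = J([T_{\Gamma},\tau]) + (\delta_{\Gamma}-1)J(t)$. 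Choosing $t$ with $J(t) = -v$ makes the right-hand side $(\delta_{\Gamma}-1)(v-v)=0$, which produces the required $\tau'$.

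The one nontrivial input, and the main obstacle, is exactly this last choice: it requires that the Johnson homomorphism $J:\cI_g \to \LH$ be surjective (Johnson's theorem, for $g\geq 3$), so that the element $v$ handed to us by the coboundary condition is realized as $J(t)$ for an \emph{actual} Torelli element $t$. The change-of-$\tau$ formula by itself only says that all choices of $\tau$ give cohomologous cocycles; the content of the lemma is that, within the trivial cohomology class, surjectivity of $J$ lets us realize the genuinely zero cocycle by a legitimate hyperelliptic quasi-involution $\tau'$ rather than merely by an abstract coboundary.
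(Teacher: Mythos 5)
Your proof is correct and is essentially identical to the paper's argument: both identify $H^1(\langle T_{\Gamma}\rangle, \LH)$ with $\LH/(\delta_{\Gamma}-1)\LH$, write the vanishing class as a coboundary $J([T_{\Gamma},\tau]) = (\delta_{\Gamma}-1)h$, use surjectivity of the Johnson homomorphism to realize $h$ (up to sign) as $J(t)$ for a Torelli element $t$, and replace $\tau$ by $t^{-1}\tau$ (your $t\tau$ with $J(t)=-v$) to kill the cocycle value. Your explicit appeal to the change-of-$\tau$ computation from Proposition~\ref{prop:muIndependentTau} is just a spelled-out version of what the paper calls ``rearranging the above equality,'' and your flagging of the $g\geq 3$ hypothesis for surjectivity of $J$ is a fair (slightly more careful) note on the same input the paper uses.
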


\begin{proof}
The ``if'' direction is clear. Suppose $\nu(T_{\Gamma}) = 0$.  The class $\nu(T_{\Gamma})$ is represented by the cocycle $\gamma \mapsto J([\gamma,\tau])$ for some hyperelliptic quasi-involution $\tau$, and hence
\begin{equation*}
    J([T_{\Gamma},\tau]) = T_{\Gamma}\cdot h-h
\end{equation*}
for some $h\in L/H$. Because the Johnson homomorphism is surjective, there is a $t\in \cI_g$ such that $J(t)=h$. By rearranging the above equality, we see that 
 $J([\gamma,t^{-1}\tau]) = 0$. The Lemma now follows from the fact that $t^{-1}\tau$ is also a hyperelliptic quasi-involution.  
\end{proof}

\begin{proposition}
\label{prop:tropicalCeresaIntegralEdge}
    Suppose $\Gamma$ has integral edge lengths. Then $\Gamma$ is Ceresa trivial if and only if $\nu(T_{\Gamma}) = 0$. 
\end{proposition}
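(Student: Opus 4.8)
The plan is to compare both conditions against a single invariant, the commutator $[\,\cdot\,,\tau]$ taken inside $\cG_g$, and to reduce everything to the vanishing of $J([T_\Gamma,\tau])$. Since $\Gamma$ has integral edge lengths, the vector $\nu(\Gamma)=\sum_{e}c_eT_e$ is exactly the image in $\pB(\bG)\subseteq\cG_g$ of the multitwist $T_\Gamma=\prod_e T_{\ell_e}^{c_e}$ (separating edges contribute $0$, consistent with $\rank\pB(\bG)=N-s$). By Lemma~\ref{lem:ceresaTrivialRep}, $\nu(T_\Gamma)=0$ iff some hyperelliptic quasi-involution $\tau$ satisfies $J([T_\Gamma,\tau])=0$, whereas $\Gamma$ is Ceresa trivial iff some such $\tau$ satisfies $\nu(\Gamma)\in\pB_\tau(\bG)_\R$. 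It therefore suffices to prove, for each fixed $\tau$, the equivalence $\nu(\Gamma)\in\pB_\tau(\bG)_\R \Longleftrightarrow J([T_\Gamma,\tau])=0$, and then quantify over $\tau$.

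First I would unwind $\pB_\tau(\bG)$. Because $\cG_g$ fits into $1\to\LH\to\cG_g\to\Sp(2g,\Z)\to 1$ with $\tau$ mapping to the central element $-I$, the commutator $[w,\tau]$ lands in $\LH$ for every $w\in\cG_g$ and is computed by $J([w,\tau])$; hence $w\in C_{\cG_g}(\tau)$ iff $J([w,\tau])=0$, so that $\pB_\tau(\bG)=\{\,w\in\pB(\bG):J([w,\tau])=0\,\}$. As $\pB_\tau(\bG)$ is a subgroup of the free abelian group $\pB(\bG)$, an integral point lies in its real span iff it lies in its saturation; thus the integral vector $\nu(\Gamma)$ belongs to $\pB_\tau(\bG)_\R$ iff $M\nu(\Gamma)\in\pB_\tau(\bG)$ for some $M\ge 1$, i.e. iff $T_\Gamma^{M}\in\pB_\tau(\bG)$, i.e. iff $J([T_\Gamma^{M},\tau])=0$ for some $M\ge1$.

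The remaining and decisive point is that this last condition is equivalent to $J([T_\Gamma,\tau])=0$. Since $\gamma\mapsto J([\gamma,\tau])$ is a crossed homomorphism and $T_\Gamma$ acts on $\LH$ through $\delta_\Gamma$, induction gives
\begin{equation*}
J([T_\Gamma^{M},\tau])=\bigl(\Id+\delta_\Gamma+\cdots+\delta_\Gamma^{\,M-1}\bigr)\,J([T_\Gamma,\tau]).
\end{equation*}
By \eqref{eq:QTodelta} the multitwist action $\delta_\Gamma$ is unipotent, so its induced action on $\LH=\wedge^3H/H$ is unipotent as well; consequently $\Id+\delta_\Gamma+\cdots+\delta_\Gamma^{\,M-1}=M\cdot\Id+(\text{nilpotent})$ is invertible over $\Q$ for every $M\ge1$. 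Hence $J([T_\Gamma^{M},\tau])=0$ iff $J([T_\Gamma,\tau])=0$, completing the fixed-$\tau$ equivalence. Quantifying over $\tau$ and applying Lemma~\ref{lem:ceresaTrivialRep} yields ``$\Gamma$ Ceresa trivial'' $\Longleftrightarrow \nu(T_\Gamma)=0$.

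I expect the genuine obstacle to be the interplay in the last two steps. The membership $\nu(\Gamma)\in\pB_\tau(\bG)_\R$ only furnishes an integral relation after clearing denominators (the saturation step), and a priori such a relation controls $J([T_\Gamma,\tau])$ only up to the torsion of $H^1(\langle T_\Gamma\rangle,\LH)$, which is nonzero in general (the order-$16$ class for $K_4$). What rescues the statement is precisely the unipotence of $\delta_\Gamma$: it forces the averaging operator $\Id+\cdots+\delta_\Gamma^{M-1}$ to be invertible over $\Q$, so that the weaker ``up to a multiple'' condition collapses to the exact vanishing $J([T_\Gamma,\tau])=0$ and no nonzero torsion class is spuriously annihilated. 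The subsidiary fact that commuting with $\tau$ in $\cG_g$ is detected exactly by vanishing of $J([\,\cdot\,,\tau])$ is immediate from $\cK_g=\ker(\Gamma_g\to\Out(N_3))$.
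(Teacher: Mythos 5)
Your proof is correct and follows essentially the same route as the paper's: both identify $\pB_\tau(\bG)$ with $\{w\in\pB(\bG) : J([w,\tau])=0\}$, reduce the definition of Ceresa triviality to the condition $J([T_\Gamma,\tau])=0$ for some hyperelliptic quasi-involution $\tau$, and then conclude by Lemma~\ref{lem:ceresaTrivialRep}. The one place you go beyond the paper is the saturation step: the paper's chain of equivalences silently replaces the defining condition $\nu(\Gamma)\in\pB_\tau(\bG)_\R$ by the integral condition $\nu(\Gamma)\in\pB_\tau(\bG)$, and your argument --- that $J([T_\Gamma^{M},\tau])=\bigl(\Id+\delta_\Gamma+\cdots+\delta_\Gamma^{M-1}\bigr)J([T_\Gamma,\tau])$ with the averaging operator invertible over $\Q$ by unipotence of $\delta_\Gamma$ acting on the torsion-free module $L/H$, so that $\pB_\tau(\bG)$ is saturated in $\pB(\bG)$ --- is precisely the justification that step requires.
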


\begin{proof}
The tropical curve $\Gamma$ is Ceresa trivial if and only if there is a hyperelliptic quasi-involution  $\tau$ such that $\nu(\Gamma)\in \pB_{\tau}(\bG)$, if and only if $[T_{\Gamma},\tau] = 1$ in $\cG_g$, if and only if 
$J([T_{\Gamma},\tau]) = 0$ in $L/H$, if and only if   $\nu(T_{\Gamma}) = 0$.  The last equivalence is Lemma \ref{lem:ceresaTrivialRep}.
\end{proof}

\begin{proposition}
\label{prop:ceresaRealEdgelengths}
The following are equivalent: 
\begin{enumerate}
    \item  $\Gamma$ is Ceresa nontrivial for all positive real edge lengths;
    \item $\Gamma$ is Ceresa nontrivial for all positive integral edge lengths;
    \item $\nu(T_{\Gamma}) \neq 0$ for all positive integral edge lengths;
    \item for any hyperelliptic quasi-involution $\tau$, the subgroup $\pB_{\tau}(\bG)$ is contained in a coordinate hyperplane of $\pB(\bG)_{\R}$. 
\end{enumerate}
\end{proposition}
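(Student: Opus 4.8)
The plan is to translate the four conditions into linear algebra inside the finite-dimensional real vector space $\pB(\bG)_{\R}$. Fix the $\Z$-basis of $\pB(\bG)$ given by the twists $T_e$ with $e$ a non-separating edge (the separating twists die in $\cG_g$, since $J(T_a)=0$, so these $N-s$ classes are a basis), and use the resulting coordinates to identify $\pB(\bG)_{\R}\cong \R^{N-s}$. In these coordinates the class $\nu(\Gamma)=\sum_e c_e T_e$ of a tropical curve with positive edge lengths is exactly a point of the open positive orthant (the separating edges contribute nothing), and conversely every point of the open positive orthant arises this way. For each hyperelliptic quasi-involution $\tau$ write $V_{\tau}=\pB_{\tau}(\bG)_{\R}$; this is a rational linear subspace, being the real span of a subgroup of the lattice $\pB(\bG)$. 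By definition $\Gamma$ is Ceresa trivial precisely when $\nu(\Gamma)$ lies in some $V_{\tau}$, so the conditions read: (1) no point of the open positive orthant lies in $\bigcup_{\tau}V_{\tau}$; (2)/(3) no positive integer point lies in $\bigcup_{\tau}V_{\tau}$; and (4) every $V_{\tau}$ is contained in a coordinate hyperplane.

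With this dictionary the easy implications are immediate. The equivalence of (2) and (3) is exactly Proposition~\ref{prop:tropicalCeresaIntegralEdge} applied for each choice of integral edge lengths, and (1)$\Rightarrow$(2) holds because positive integer points are positive real points. For (4)$\Rightarrow$(1), a coordinate hyperplane $\{x_e=0\}$ is disjoint from the open positive orthant; hence if each $V_{\tau}$ lies in such a hyperplane, then $\bigcup_{\tau}V_{\tau}$ misses the open positive orthant, which is (1).

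The substance is therefore the remaining implication (3)$\Rightarrow$(4), which I would prove in contrapositive form: if some $V_{\tau}$ is \emph{not} contained in any coordinate hyperplane, then some $V_{\tau}$ contains a positive integer point, giving integral edge lengths that are Ceresa trivial. The geometric heart is a dichotomy: for these particular subspaces, being contained in no coordinate hyperplane forces $V_{\tau}$ to meet the open positive orthant. This fails for an arbitrary rational subspace (e.g.\ $\{x_1+x_2=0\}$ in $\R^2$ lies in no coordinate hyperplane yet avoids the positive orthant), so it must rest on extra structure. The structural input I would establish is that $\pB_{\tau}(\bG)$ is generated by \emph{nonnegative} multitwists, i.e.\ $V_{\tau}$ is spanned by vectors with all coordinates $\geq 0$. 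Granting this, the dichotomy is elementary: if $v_1,\dots,v_k$ is a nonnegative spanning set and no coordinate function vanishes identically on $V_{\tau}$, then every edge lies in the support of some $v_j$, and because there is no cancellation among nonnegative vectors the sum $v_1+\cdots+v_k$ has all coordinates strictly positive, so $V_{\tau}$ meets the open positive orthant. Finally, rationality of $V_{\tau}$ lets me perturb this positive real point to a positive rational one and clear denominators; because $\pB_{\tau}(\bG)$ is saturated in $\pB(\bG)$ (an integer multiple of $x$ centralizes $\tau$ only if $x$ does, as one checks using that $\LH$ is torsion free and the relevant action is unipotent), the resulting positive integer point lies in $\pB_{\tau}(\bG)$ itself, as required.

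The hard part, and the one step I have deferred, is the structural lemma that $\pB_{\tau}(\bG)$ is generated by nonnegative multitwists. The mechanism I expect is that $\tau$, being orientation-preserving, conjugates each positive Dehn twist $T_{\ell_e}$ to the positive twist $T_{\tau(\ell_e)}$, and that $\tau$ acts on the configuration $\{\ell_e\}$ by an involution: the curves it fixes contribute single basis twists, the pairs it swaps contribute orbit sums $T_e+T_{e'}$, and any $\ell_e$ that $\tau$ carries off the configuration forces the coordinate $x_e$ to vanish identically on $V_{\tau}$. All of these generators are nonnegative. Making this precise is delicate because the centralizer condition lives in $\cG_g=\Gamma_g/\cK_g$ rather than in $\Gamma_g$, so one cannot simply read off commutation from the curves; instead I would compute $J([x,\tau])$ via the Johnson homomorphism and show that the vanishing locus of this crossed homomorphism is exactly the span of the orbit generators above, with no cancellation possible among the classes of curves moved off the configuration. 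Once this centralizer computation is in hand, the linear-algebra dichotomy and the integrality argument close the cycle of implications.
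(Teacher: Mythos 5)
Your treatment of $(4)\Rightarrow(1)\Rightarrow(2)$, of $(2)\Leftrightarrow(3)$ via Proposition~\ref{prop:tropicalCeresaIntegralEdge}, and the saturation of $\pB_{\tau}(\bG)$ in $\pB(\bG)$ all match the paper and are correct. The problem is $(3)\Rightarrow(4)$. Your proof of it rests entirely on the deferred structural lemma that, for \emph{every} hyperelliptic quasi-involution $\tau$, the group $\pB_{\tau}(\bG)$ is generated by nonnegative vectors (single twists for curves fixed by $\tau$, orbit sums $T_e+T_{e'}$ for swapped pairs, with any curve carried off the configuration forcing its coordinate to vanish on $\pB_{\tau}(\bG)_{\R}$). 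That lemma is false, for exactly the reason you flag and then set aside: the centralizer is taken in $\cG_g=\Gamma_g/\cK_g$, and $\tau$ is only a quasi-involution, so commutation cannot be read off the curves, and cancellations among the Johnson images of different edges do occur. Quantitatively, replacing $\tau$ by $\tau'=t\tau$ with $t\in\cI_g$ changes the defining equation of the centralizer by a coboundary: by the computation in Proposition~\ref{prop:muIndependentTau}, $J([x,\tau'])=J([x,\tau])+(\delta_x-1)J(t)$, where $\delta_x$ is the homology action of $x$; this term can tilt $\pB_{\tau'}(\bG)$ into a ``mixed-sign'' position.

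Here is an explicit counterexample to your lemma (and, it should be said, to the one-sentence justification of this same step in the paper's own proof, which simply asserts that a subgroup not contained in a coordinate hyperplane contains an all-positive lattice point). Let $g=3$ and let $\bG$ be the graph with two vertices of weight $1$ joined by two edges, realized by a bounding pair $\ell_1,\ell_2$ on $\Sigma_3$ whose complement is two genus-one pieces; choose a symplectic basis with $\beta_1=[\ell_1]=[\ell_2]$ and with $\alpha_2,\beta_2$ supported on one of the two pieces. Lemma~\ref{lem:tauHyperelliptic} gives a quasi-involution $\tau$ with $\tau(\ell_1)=\ell_2$, and then $J([T_{\ell_1}^{c_1}T_{\ell_2}^{c_2},\tau])=(c_1-c_2)\,\alpha_2\wedge\beta_2\wedge\beta_1$ (up to signs, which do not matter below), so $\pB_{\tau}(\bG)=\Z(T_{\ell_1}+T_{\ell_2})$, consistent with your picture. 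Now pick $t\in\cI_3$ with $J(t)=3\,\alpha_1\wedge\alpha_2\wedge\beta_2$ (Johnson surjectivity) and set $\tau'=t\tau$. For $x=T_{\ell_1}^{c_1}T_{\ell_2}^{c_2}$ one computes $(\delta_x-1)J(t)=-3(c_1+c_2)\,\alpha_2\wedge\beta_2\wedge\beta_1$, and $\alpha_2\wedge\beta_2\wedge\beta_1\neq 0$ in $L/H$, so the condition $J([x,\tau'])=0$ reads $(c_1-c_2)-3(c_1+c_2)=0$, i.e.\ $c_1=-2c_2$. Hence $\pB_{\tau'}(\bG)=\Z(2T_{\ell_1}-T_{\ell_2})$: it is contained in no coordinate hyperplane, it contains no vector with all coordinates positive, and it is not generated by nonnegative vectors. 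So if the quasi-involution witnessing $\neg(4)$ happens to be such a $\tau'$, your argument produces nothing. (The proposition itself survives here: the untwisted $\tau$ exhibits the positive vector $T_{\ell_1}+T_{\ell_2}$, so $\neg(3)$ holds as well.) The moral is that no argument which works with the single $\tau$ witnessing $\neg(4)$ can close this implication; a correct proof of $(3)\Rightarrow(4)$ must exploit the freedom to move among quasi-involutions $\tau\mapsto t\tau$, since Ceresa triviality quantifies existentially over $\tau$.
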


\begin{proof}
The implications $(4) \Rightarrow (1) \Rightarrow (2)$ are clear,  and   $(2) \Rightarrow (3)$ follows from Proposition \ref{prop:tropicalCeresaIntegralEdge}. Suppose there is a $\tau$ such that $\pB_{\tau}(\bG)$ is not contained in a coordinate hyperplane of $\pB(\bG)_{\R}$. This means that there is a lattice point in $\pB_{\tau}(\bG)$ whose coordinates are all positive. This corresponds to a tropical curve $\Gamma$ with underlying vertex-weighted graph $\bG$ such that $\nu(T_{\Gamma}) = 0$. This proves $(3) \Rightarrow (4)$. 
\end{proof}

We end this section by showing that the Ceresa class vanishes for hyperelliptic tropical curves. First, we recall some terminology. Let $\bG$ be a vertex-weighted graph. A vertex $v$ of a vertex-weighted graph $\bG$ is \textit{stable} if 
\begin{equation*}
    2w_v-2+\val(v) >0.
\end{equation*}
 and $\bG$ is stable if all of its vertices are stable. A tropical curve is stable if its underlying weighted graph is stable. 
Two tropical curves are \textit{tropically equivalent} if one can be obtained from the other via a sequence of the following moves:
		\begin{itemize}
			\item[-] adding or removing a 1-valent vertex $v$ with $w_v = 0$ and the edge incident to $v$, or
			\item[-] adding or removing a 2-valent vertex $v$ with $w_v = 0$, preserving the underlying metric space.
		\end{itemize}
Every tropical curve $\Gamma$ of genus $g\geq 2$ is tropically equivalent to a unique tropical curve whose underlying weighted graph is stable \cite[Section~2]{Caporaso}. 

\begin{lemma}
\label{lem:tropEquivalentCeresa}
If $\Gamma$ and $\Gamma'$ are tropically equivalent, then $\nu(\Gamma) = \nu(\Gamma')$.
\end{lemma}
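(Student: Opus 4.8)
The plan is to reduce the statement to the two elementary tropical moves that generate tropical equivalence and check invariance of the Ceresa class for each. Recall that $\nu(\Gamma)$ is defined via the data $(\bG, \{c_e\})$ through the element $\nu(\Gamma) = \sum_{e} c_e T_e \in \pB(\bG)_{\R}$ together with the subgroups $\pB_\tau(\bG)$; the notion of Ceresa triviality only depends on the relative position of $\nu(\Gamma)$ with respect to the collection of subgroups $\pB_\tau(\bG)$ as $\tau$ ranges over hyperelliptic quasi-involutions. So \emph{first} I would make precise what ``$\nu(\Gamma)=\nu(\Gamma')$'' should mean when the two curves have different underlying graphs: the two moves induce a canonical identification of the relevant groups $\cG_g$ (in fact the same mapping class group $\Gamma_g$, since the genus is unchanged) and of the twist subgroups $\pB(\bG) \cong \pB(\bG')$, under which the vectors $\nu(\Gamma)$ and $\nu(\Gamma')$ and the subgroups $\pB_\tau$ correspond.

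\emph{Next}, I would handle the two moves separately. For the move that adds or removes a $1$-valent weight-zero vertex together with its incident edge $e_0$: the edge $e_0$ is a separating edge of $\bG$ (it disconnects the leaf), so the corresponding curve $\ell_{e_0}$ is separating on $\Sigma_g$, and the Dehn twist $T_{\ell_{e_0}}$ is a separating twist. Its image $T_{e_0}$ in $\pB(\bG)$ is therefore trivial, since $\cK_g$ contains the Johnson kernel and separating twists map to zero under $J$; concretely, as noted in \S\ref{subsec:tropicalCeresa} the rank of $\pB(\bG)$ is $N-s$, with separating edges contributing nothing. Hence the coordinate of $\nu(\Gamma)$ indexed by $e_0$ simply drops out, and $\nu(\Gamma)=\nu(\Gamma')$ in $\pB(\bG)_\R = \pB(\bG')_\R$. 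For the move that adds or removes a $2$-valent weight-zero vertex $v$ while preserving the metric: subdividing an edge $e$ into two edges $e', e''$ with $c_{e'}+c_{e''} = c_e$ replaces the single loop $\ell_e$ by two parallel, homologous, non-separating loops $\ell_{e'}, \ell_{e''}$ bounding an annulus. These two curves are isotopic on $\Sigma_g$, so $T_{\ell_{e'}} = T_{\ell_{e''}} = T_{\ell_e}$ as mapping classes, and hence $T_{e'} = T_{e''} = T_e$ in $\pB(\bG')$. Consequently $c_{e'}T_{e'} + c_{e''}T_{e''} = (c_{e'}+c_{e''})T_e = c_e T_e$, so the contribution to $\nu$ is unchanged and $\nu(\Gamma) = \nu(\Gamma')$ under the natural identification.

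\emph{Finally}, since tropical equivalence is by definition generated by finite sequences of these two moves (and their inverses), invariance under each move gives $\nu(\Gamma)=\nu(\Gamma')$ in general, completing the proof. The main obstacle is bookkeeping rather than conceptual: one must verify that the identifications of $\pB(\bG)$ with $\pB(\bG')$ and of the hyperelliptic quasi-involutions $\tau$ are compatible, so that equality of the vectors $\nu(\Gamma)$ is genuinely an equality in matching groups and the sets of admissible $\tau$ correspond. The cleanest way to organize this is to observe that both moves preserve the surface $\Sigma_g$ up to isotopy together with the isotopy classes of the non-separating loops in the arrangement $\Lambda$ (a leaf edge contributes a nullhomotopic-on-one-side, i.e.\ separating, loop that plays no role, while a subdivision merely repeats an existing loop), so the ambient data defining $\nu$ and the groups $\pB_\tau$ literally coincide for $\Gamma$ and $\Gamma'$.
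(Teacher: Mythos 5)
Your proof is correct and follows essentially the same route as the paper's: check invariance of $\nu$ under the two generating moves, using isotopy of the two parallel loops (and additivity of the lengths) for the subdivision move. The only, harmless, difference is in the leaf-edge case: the paper notes that $\ell_{e_0}$ bounds a disc since $\Sigma_v$ is a disc, so $T_{\ell_{e_0}}$ is trivial already as a mapping class, whereas you invoke the weaker fact that $\ell_{e_0}$ is separating and hence its twist dies in $\cG_g = \Gamma_g/\cK_g$; both observations give the same conclusion.
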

\begin{proof}
Let $v$ be a vertex with $w_v=0$. Suppose  $\val(v)=1$, and denote by $e$ the adjacent edge.  Then $\Sigma_v$ is a disc, so $\ell_e$ is contractible, and hence $T_{\ell_e} =1$.  Now suppose $\val(v)=2$, and denote by $e,f$ the adjacent edges. Then $\ell_{e}$ is isotopic to $\ell_{f}$, and hence $T_{\ell_{e}} = T_{\ell_{f}}$. We conclude that the Ceresa class of tropically equivalent tropical curves coincide. 
\end{proof}

Suppose $\Gamma$ is a stable hyperelliptic tropical curve with underlying vertex-weighted graph $\bG$, and $\sigma$ the hyperelliptic involution of $\Gamma$. That is, $\sigma:\Gamma \to \Gamma$ is an isometry that induces an involution of $\bG$  such that all vertices of positive weight are fixed and  $\Gamma/\sigma$ is a metric tree.  By \cite[Proposition~2.5]{Corey} the edge set of $\Gamma$ partitions into the subsets
\begin{itemize}
    \item[-] $\{e\} $ for separating edges $e$ and $\sigma$ restricted to $e$ is the identity,
    \item[-] $\{e,f\}$ where $e\neq f$ form a separating pair of edges and $\sigma(e) = f$, and
    \item[-] $\{e\}$ where $e$ is any other edge, and $\sigma$ takes $e$ to itself, interchanging its endpoints. 
\end{itemize}
If $\{e,f\}$ is a separating pair, then $c_{e} = c_f$ because $\sigma$ is an isometry.

\begin{lemma}
\label{lem:tauHyperelliptic}
Suppose $\Gamma$ is a 2-edge connected stable hyperelliptic tropical curve, and $\{\ell_{e} \, | \, e\in E(\Gamma)\}$ is an arrangement of loops on $\Sigma_g$ whose dual graph is $\Gamma$.  There is a hyperelliptic quasi-involution $\tau$ of $\Sigma_g$ such that $\tau(\ell_e) = \ell_{\sigma(e)}$.  
\end{lemma}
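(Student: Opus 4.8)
The plan is to construct $\tau$ as an honest orientation-preserving involution of $\Sigma_g$ that lifts the graph involution $\sigma$ through the decomposition $\Sigma_g = \bigcup_v \Sigma_v$ used to build the loop arrangement, and then to verify that $\tau$ acts as $-I$ on $H$ by showing that the quotient $\Sigma_g/\tau$ is a sphere. The key structural input is $2$-edge connectivity: together with the edge partition recalled above, it forces every edge $e$ to satisfy either $\sigma(e) = f \neq e$ or $\sigma(e) = e$ with the two half-edges of $e$ interchanged. In particular $\sigma$ fixes no half-edge, so at each $\sigma$-fixed vertex $v$ the induced permutation of the $\val(v)$ boundary circles of $\Sigma_v$ is free; hence $\val(v)$ is even.

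I would first define $\tau$ on the pieces. For a $\sigma$-fixed vertex $v$, I realize $\Sigma_v$ by deleting, from a closed genus-$w_v$ surface equipped with its standard hyperelliptic involution (an honest involution acting as $-I$ on homology with $2w_v + 2$ fixed points), a disk around each point of $\val(v)/2$ free orbits; this yields an involution $\tau_v$ of $\Sigma_v$ permuting the boundary circles in exactly the free pairing by which $\sigma$ permutes the corresponding half-edges, while retaining all $2w_v+2$ fixed points in the interior. For a $\sigma$-orbit $\{v,v'\}$ of swapped (necessarily weight-$0$, hence planar) vertices I fix an orientation-preserving identification $\Sigma_v \cong \Sigma_{v'}$ and let $\tau$ interchange the two pieces through it. I then glue these pieces together equivariantly along the loops: a swapped edge-pair $\{e,f\}$ has the tube about $\ell_e$ carried to the tube about $\ell_f$, while for an edge with $\sigma(e)=e$ the two boundary circles that form $\ell_e$ are interchanged, so that $\tau$ preserves $\ell_e$ and reverses its orientation. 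Because $\sigma$ fixes no half-edge the gluing data is consistent, and the result is an orientation-preserving involution $\tau$ with $\tau(\ell_e) = \ell_{\sigma(e)}$ for every $e$, by construction.

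The substantive step, and the one I expect to be most delicate, is to check that $\tau$ acts as $-I$ on $H = H_1(\Sigma_g, \Z)$; here I would avoid tracking a symplectic basis (on which $\sigma$ can mix tree and non-tree edges, so the action is not visibly diagonal and the relevant signs are easy to get wrong) and instead use the transfer isomorphism $H_1(\Sigma_g/\tau, \Q) \cong H_1(\Sigma_g, \Q)^{\tau}$. Since $\tau^2 = 1$ and $H$ is torsion-free, it suffices to prove $H_1(\Sigma_g/\tau, \Q) = 0$, for then $\tau$ has only the eigenvalue $-1$ and equals $-I$ on $H$, so $\tau$ is even a hyperelliptic involution. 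The quotient $\Sigma_g/\tau$ is assembled from the pieces $\Sigma_v/\tau_v$ (each a sphere-with-holes, being a hyperelliptic quotient of a closed surface with some disks deleted) and single copies of the swapped pairs (planar), glued along the metric quotient graph $\Gamma/\sigma$. The hypothesis that $\Gamma/\sigma$ is a tree is exactly what guarantees that no handle is created in the assembly, so $\Sigma_g/\tau$ has genus $0$. The heart of the argument is precisely this planarity bookkeeping at the self-folded loops $\ell_e$ with $\sigma(e)=e$ --- verifying that the orientation-reversing involution on such an $\ell_e$ contributes a branch arc (equivalently, a pendant edge in $\Gamma/\sigma$ together with two of the $2g+2$ fixed points of $\tau$) rather than a handle --- which I would confirm by a local model at $\ell_e$, or equivalently by checking that the fixed-point count yields $\chi(\Sigma_g/\tau) = 2$.
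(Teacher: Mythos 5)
Your proposal is sound in its essentials, but it is a genuinely different argument from the paper's, and it proves something strictly stronger. The paper's proof runs through the block decomposition of $\Gamma$ in the sense of \cite{Corey}: on each $2$-vertex connected block it builds a homeomorphism by an explicit polygon-gluing model; on blocks consisting of a weight-one vertex or a vertex with a loop it uses a one-holed torus map that acts as $-I$ on homology but is the \emph{identity} on the boundary; and it glues all the pieces across cut vertices by the identity. Insisting on identity along the gluing circles is precisely what forces the output to be only a quasi-involution (on a one-holed torus, a map that is $-I$ on homology and the identity on the boundary squares to a nonzero power of the boundary twist, never to $1$), and the property $\tau_*=-I$ must then be verified by hand on an explicit symplectic basis with careful orientation bookkeeping, cf.\ \eqref{eq:tauEll}--\eqref{eq:tauGammai}. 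You instead make every local piece an honest involution, glue equivariantly, and get $\tau_*=-I$ for free from the transfer isomorphism once $\Sigma_g/\tau\cong S^2$ is established, which is where the tree hypothesis on $\Gamma/\sigma$ enters. Your route avoids all the basis and sign bookkeeping; the paper's route avoids any equivariant gluing analysis, since identity-gluings are trivially consistent.

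Two warnings. First, your conclusion (an honest involution) contradicts the remark immediately following Lemma \ref{lem:tauHyperelliptic}, which asserts that the quasi-involution ``cannot in general be taken to be an involution.'' Under the lemma's hypotheses I believe your stronger conclusion is in fact correct and that remark is overstated: the genuine obstruction to an honest involution comes from half-edges fixed by $\sigma$, i.e., from pointwise-fixed separating edges --- as in the $3$-balloon of Figure \ref{fig:3balloon}, where an involution would have to preserve each boundary circle of a pair of pants, which is impossible for an orientation-preserving involution --- and $2$-edge connectivity excludes separating edges, hence fixed half-edges, which is exactly the observation your proof turns on (this is also consistent with the lifting criterion of \cite{AminiBakerBrugalleRabinoff} invoked at the end of the paper). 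But because you are proving more than the authors believed possible, the details must be airtight, and that is the second warning: the sentence ``because $\sigma$ fixes no half-edge the gluing data is consistent'' is the one real gap in the write-up. What has to be shown is this: for a self-fixed edge $e$, if $\phi$ is the gluing map between the two boundary circles forming $\ell_e$ and $\tau_v$ is your piece involution, the descended self-map of $\ell_e$ is $\phi^{-1}\circ\tau_v$, and you must choose $\phi$ (or isotope $\tau_v$ in a collar) so that this composite is an \emph{orientation-reversing involution} of the circle --- a reflection, whose two fixed points become the branch points of your ``fold''; note that $\phi^{-1}\circ\tau_v$ is automatically orientation-reversing because gluing maps reverse boundary orientations while $\tau_v$ preserves them, but the involution property is a condition you must impose by choice of $\phi$. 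For a swapped pair $\{e,\sigma(e)\}$, simply define the gluing along $\ell_{\sigma(e)}$ to be the $\tau$-conjugate of the gluing along $\ell_e$. Both points are easy but must be written down. Finally, as in the paper's own proof, you should state explicitly that it suffices to treat the standard glued model, since any arrangement with dual graph $\Gamma$ is carried to it by an orientation-preserving homeomorphism of $\Sigma_g$ (change of coordinates principle).
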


\begin{remark} Note that the quasi-involution cannot in general be taken to be an involution; this means that the proof is necessarily more complicated than showing that a hyperelliptic involution of the graph lifts in some natural (functorial) way to an involution of $\Sigma_g$.  On the other hand, if $\Gamma$ is 2-vertex connected, so that (in the language of the proof) there is only one $\Sigma_i$, the quasi-involution we construct is in fact an involution.
\end{remark}

\begin{proof}

Let $\Gamma_1,\ldots,\Gamma_k$ be a block decomposition of $\Gamma$ in the sense of \cite[\S 2]{Corey}. The hyperelliptic involution $\sigma$ restricts to a hyperelliptic involution on each $\Gamma_i$ because $\sigma$ fixes vertices of positive weight and acts as $-I$ on $\Jac(\Gamma)$ \cite[Theorem~5.19]{BakerNorine}.
If $\Gamma_i$ is a single vertex of weight 1, then let $\Sigma_i$ be a genus-1 surface with one boundary component, and $\tau_i:\Sigma_i\to \Sigma_i$ be a orientation-preserving homeomorphism that acts as $-I$ on $H_1(\Sigma_i,\Z)$ and restricts to the identity on $\partial \Sigma_i$. If $\Gamma_i$ is a single vertex with a loop edge $e$, then let $\Sigma_i$ be a genus-1 surface with one boundary component, and $\tau_i:\Sigma_i\to \Sigma_i$ be a orientation-preserving homeomorphism that acts as $-I$ on $H_1(\Sigma_i,\Z)$, $\tau(\ell_e) = \ell_e$, and restricts to the identity on $\partial \Sigma_i$. 

Otherwise, $\Gamma_i$ is 2-vertex connected and genus $g_i\geq 2$.   Form $\Sigma_{g_i}$ as in \S4.1. For each $u\in V(\Gamma_i)$ fixed by $\sigma$, remove a small open disc $S_u$ from $\Sigma_u^{\circ}$; denote resulting boundary curve by $\ell_u$ and the resulting surface by $\Sigma_i$. For each $u\in V(\Gamma_i)$ (resp. $e\in E(\Gamma)$) choose a point $p_u\in \Sigma_u^{\circ}$ (resp. $p_e\in \ell_e$). For each half-edge $h$ of $\Gamma_i$, define a simple path $\eta_h$ in $\Sigma_i$ satisfying the following. 
\begin{itemize}[noitemsep]
    \item[-] If $\sigma(u)\neq u$, and  $h$  is a half-edge of $e$ adjacent to $u$, then $\eta_h$ is a simple path in $\Sigma_u$ from $p_u$ to $p_e$ meeting $\partial \Sigma_u$ only at $p_e$. 
    \item[-] If $\sigma(u)= u$, and $h$  is a half-edge of $e$ adjacent to $u$, then $\eta_h$ is a simple path in $\Sigma_u\setminus S_u$ from $p_u$ to $p_e$ meeting $\partial (\Sigma_u\setminus S_u)$ only at $p_e$. 
\item[-] If $h,h'$ are adjacent to $u$, then $\eta_{h} \cap \eta_{h'} = p_u$.
\end{itemize}
We claim that there are orientation-preserving homeomorphisms $\tau_u:\Sigma_u \to \Sigma_{\sigma(u)}$ so that 
	\begin{itemize}[noitemsep]
	\item[-] $\tau_u(\eta_h) = \eta_{\sigma(h)}$,
	\item[-] $\tau_u|\ell_e = \tau_v|\ell_e$, and
	\item[-] the restriction of $\tau_u$ to $\ell_u$ is the identity, for each $u\in V(\Gamma_i)$ fixed by $\sigma$. 
	\end{itemize}
Suppose $\sigma(u) = v\neq u$. Order the half edges of $u$ (resp. $v$) by $h_1,\ldots,h_a$ (resp. $k_1,\ldots,k_a$) such that $\sigma(h_j) = k_j$, and denote by $e_j$ the edge containing $h_j$ (resp. $f_j$ the edge containing $k_j$). Let $D$ be an oriented $3a$-gon, and label the edges of $D$ (counterclockwise) by $\eta_{h_1},\ell_{e_1},\eta_{h_1}^{-1},\ldots, \eta_{h_a},\ell_{e_a},\eta_{h_a}^{-1}$. Gluing  $\eta_{h_j}$ along $\eta_{h_j}^{-1}$ (for $j=1,\ldots,a$) yields a quotient map $\pi_u:D \to \Sigma_u$, see Figure \ref{fig:gluingSurface} for an illustration. Now relabel the edge $\eta_{h_j}$ by $\eta_{k_j}$, $\ell_{e_j}$ by $\ell_{f_j}$, and $\eta_{h_j}^{-1}$ by $\eta_{k_j}^{-1}$.  Gluing $\eta_{k_j}$ along $\eta_{k_j}^{-1}$ (for $j=1,\ldots,a$) yields a quotient map $\pi_v:D \to \Sigma_v$. This map induces a homeomorphism on the quotient $\tau_u:\Sigma_u\to \Sigma_v$ such that $\tau_u(\eta_{h_j}) = \eta_{k_j}$ and $\tau_u(\ell_{e_j}) = \tau_u(\ell_{f_j})$. In particular, the composition $\tau_{v}\tau_{u}$ is the identity on $\Sigma_u$. If $\Gamma$ is 2-vertex connected and stable, it has no vertices fixed by $\sigma$, and we may glue these $\tau_u$ to give a hyperelliptic involution $\tau$.

\begin{figure}[tbh!]
    \centering
    \includegraphics[width=10cm]{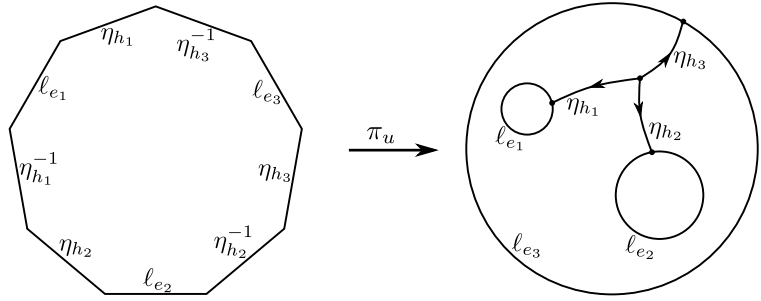}
    \caption{The gluing map $\pi_u:D\to \Sigma_u$ when $a=3$}
    \label{fig:gluingSurface}
\end{figure}

Now suppose $\sigma(u) = u$. Label the half-edges of $u$ by $h_1,\ldots,h_a,k_1,\ldots,k_a$ such that $\sigma(h_j) = k_j$, and denote by $e_j$ the edge containing $h_j$ (resp. $f_j$ the edge containing $k_j$). Let $\eta_u$ be a simple path in $\Sigma_u\setminus S_u$ from $p_u$ to a point $p$ on $\ell_u$ that meets the other $\eta_{h_i}$'s and $\eta_{k_i}$'s only at $p_u$. Let $D$ be an oriented $6a+3$-gon. Label the edges of $D$ (counterclockwise) by
\begin{equation*}
    \eta_{h_1},\ell_{e_1},\eta_{h_1}^{-1},\ldots, \eta_{h_a},\ell_{e_a},\eta_{h_a}^{-1}, \eta_{u},\ell_u,\eta_{u}^{-1},\eta_{k_a},\ell_{f_a},\eta_{k_a}^{-1},\ldots, \eta_{k_1},\ell_{f_1},\eta_{k_1}^{-1}.
\end{equation*} 
Gluing  $\eta_u$ along $\eta_{u}^{-1}$, $\eta_{h_j}$ along $\eta_{h_j}^{-1}$, and $\eta_{k_j}$ along $\eta_{k_j}^{-1}$ (for $j=1,\ldots,a$) yields a quotient map $\pi_u:D \to \Sigma_u$. 
Now relabel $\eta_{h_j}$ by $\eta_{k_j}$, $\eta_{h_j}^{-1}$ by $\eta_{k_j}^{-1}$, and $\ell_{e_j}$ by $\ell_{f_j}$ ($j=1,\ldots,a$).  Gluing  $\eta_u$ along $\eta_{u}^{-1}$, $\eta_{h_j}$ along $\eta_{h_j}^{-1}$, and $\eta_{k_j}$ along $\eta_{k_j}^{-1}$ (for $j=1,\ldots,a$) yields another quotient map $\pi_u':D \to \Sigma_u$. This map induces a homeomorphism on the quotient $\tau_u:\Sigma_u\to \Sigma_u$ such that $\tau_u(\eta_{h_j}) = \eta_{k_j}$, $\tau_u(\ell_{e_j}) = \ell_{f_j}$, (for $j=1,\ldots,a$),  and is the identity on $\ell_u$.  

Finally, we may modify the $\tau_u$'s in a collar neighborhood of $\partial \Sigma_u$ so that $\tau_u|\ell_e = \tau_v|\ell_e$ when $e$ is an edge between $u$ and $v$.  Having done so, the resulting $\tau_u$'s glue to give an orientation-preserving homeomorphism $\tau_i:\Sigma_i\to \Sigma_i$ that restricts to the identity on $\partial \Sigma_i$, and which sends $\ell_e$ to $\ell_{\sigma(e)}$ for all edges $e$. 

Define a homology basis of $\Sigma_{g_i}$ in the following way. Let $e_1,\ldots, e_{g_i}\in E(\Gamma_i)$ be a collection of edges whose removal from $\Gamma_i$ is a spanning tree $T$.  Denote by $h_j$ the unique cycle in $T\cup \{e_j\}$. Let $\gamma_i$ be the simple closed curve in $\Gamma_{g_i}$ formed by the paths $\eta_h$ for all half-edges in the path $h_j$. Orient $\ell_i$ and $\gamma_i$ so that $\hat{i}([\gamma_i],[\ell_{e_i}]) = 1$; the cycles \begin{equation*}
    [\ell_{e_1}]\ldots, [\ell_{e_{g_i}}],  [\gamma_1],\ldots, [\gamma_{g_i}]
\end{equation*}
form a symplectic basis of $H_1(\Sigma_{g_i},\Z)$.

Next, we claim, for $j=1,\ldots,g_i$, that 
\begin{align}
& \tau_*([\ell_{e_j}]) = -[\ell_{e_j}], \label{eq:tauEll}\\
& \tau_*([\gamma_j]) = -[\gamma_j]. \label{eq:tauGammai}
\end{align} 
Consider \eqref{eq:tauEll}. Denote the vertices of $e_{j}$ by $u_j$ and $v_j$. Without loss of generality, suppose $\Sigma_{u_j}$ is on the left of $\ell_{e_j}$.  Because $\tau_{u_j}$ is orientation preserving, $\Sigma_{v_j}$ appears on the left of $\tau(\ell_{e_j})$.  If $e_{j}$ is flipped, then $\tau(\ell_{e_j}) = \ell_{e_j}$, and because $\Sigma_u$ appears on the left of $\ell_e$ but on the right of $\tau(\ell_e)$, we have $\tau_*([\ell_{e_j}]) = -[\ell_{e_j}]$.  Now suppose $\{e_j,f_j\}$ is a separating pair, orient $\ell_{f_j}$ so that $[\ell_{e_j}] = [\ell_{f_j}]$.  Their removal splits $\Sigma_{g_i}$ into two surfaces $S,S'$ with boundary curves $\ell_{e_j},\ell_{f_j}$. The subsurfaces  $\Sigma_{u_j}$ and $\Sigma_{v_j}$ belong to the same surface; suppose it is $S$. Because $S$ lies on the left of both $\ell_{e_j}$ and $\tau(\ell_{e_j})$, we must have that $\tau_*([\ell_{e_j}]) = -[\ell_{f_j}] = -[\ell_{e_j}]$, and therefore \eqref{eq:tauEll}.

Now consider \eqref{eq:tauGammai}. Because $\tau_i$ is orientation-preserving, we have 
\begin{equation*}
    \hat{i}(\tau_*([\gamma_j]), \tau_*([\ell_{e_j}])) = \hat{i}(\tau_*([\gamma_j]), -[\ell_{e_j}]) = 1.
\end{equation*}
Together with the fact that $\tau_i(\gamma_j) = \gamma_j$, we have $\tau_{*}([\gamma_j]) = -[\gamma_j]$.

Finally, we will piece together the $\tau_i$'s to get the requisite hyperelliptic quasi-involution. For each cut-vertex $u$ of $\Gamma$, let $\Sigma_{u}$ be a genus-0 surface with $n_u$ boundary components, where $n_u$ is the number of blocks attached to $u$.  For each block $\Gamma_i$ attached at $u$, glue $\Sigma_i$ to $\Sigma_u$ along the corresponding boundary components. The orientation-preserving homeomorphism  $\tau:\Sigma_g\to \Sigma_g$ given by 
\begin{align*}
    \tau|\Sigma_{i} &= \tau_i \hspace{5pt} \text{ for } \hspace{5pt} i=1,\ldots,k \\
    \tau|\Sigma_u &= \id_{\Sigma_u} \hspace{5pt} \text{ if $u$ is a cut-vertex of } \Gamma
\end{align*}
acts as $-I$ on $H_{1}(\Sigma_g,\Z)$ and satisfies $\tau(\ell_e) = \ell_{\sigma(e)}$ for all $e\in E(\Gamma)$, as required.

\end{proof}

\begin{proposition}
\label{prop:hyperellipticCeresaTrivial}
    If $\Gamma$ is a hyperelliptic tropical curve, then $\Gamma$ is Ceresa trivial.  
\end{proposition}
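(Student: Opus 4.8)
The plan is to reduce to the stable case, produce a hyperelliptic quasi-involution $\tau$ whose conjugation action on the twist lattice $\pB(\bG)$ realizes the edge-involution $\sigma$, and then observe that $\nu(\Gamma)$ is automatically fixed by that action. First, by Lemma~\ref{lem:tropEquivalentCeresa} I may replace $\Gamma$ by its tropically equivalent stable model (still hyperelliptic, since the moves only add or delete weight-$0$ vertices of valence $\leq 2$), so I assume $\Gamma$ stable with hyperelliptic involution $\sigma$ and underlying graph $\bG$. The point to keep in mind throughout is that a separating edge $e$ gives a separating twist $T_{\ell_e}$, which lies in $\cK_g$ and hence vanishes in $\cG_g$; thus the non-separating twists form a basis of $\pB(\bG)$ and $\nu(\Gamma)=\sum_e c_e T_e$ is supported on them.

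I would treat first the case where $\bG$ is $2$-edge-connected. Apply Lemma~\ref{lem:tauHyperelliptic} to obtain a hyperelliptic quasi-involution $\tau$ of $\Sigma_g$ with $\tau(\ell_e)=\ell_{\sigma(e)}$ for every edge $e$; write $\tau$ also for its image in $\cG_g$. Because $\tau$ is orientation-preserving, conjugation sends $T_{\ell_e}$ to $T_{\tau(\ell_e)}=T_{\ell_{\sigma(e)}}$, so in $\cG_g$ the element $\tau$ acts on the free abelian group $\pB(\bG)$ by the permutation $T_e\mapsto T_{\sigma(e)}$ of its basis of non-separating twists. Consequently $\pB_\tau(\bG)=\pB(\bG)\cap C_{\cG_g}(\tau)$ is exactly the group of $\sigma$-fixed vectors, and $\pB_\tau(\bG)_\R$ is the $\sigma$-invariant subspace of $\pB(\bG)_\R$. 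It therefore suffices to check that $\nu(\Gamma)$ is $\sigma$-invariant, which follows from the edge classification recalled before Lemma~\ref{lem:tauHyperelliptic}: separating edges drop out since $T_e=0$; an edge with $\sigma(e)=e$ contributes a $\sigma$-fixed basis vector; and for a separating pair $\{e,f\}$ with $\sigma(e)=f$ one has $c_e=c_f$, so $c_eT_e+c_fT_f$ is fixed. Hence $\nu(\Gamma)\in\pB_\tau(\bG)_\R$ and $\Gamma$ is Ceresa trivial.

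For general $\bG$ I would extend the block-decomposition construction in the proof of Lemma~\ref{lem:tauHyperelliptic}. That argument already accommodates cut vertices; the only new feature of a non-$2$-edge-connected graph is the presence of bridge (i.e.\ $K_2$) blocks. By the edge classification every bridge is a separating edge of type~(1), fixed by $\sigma$ together with both of its endpoints, so no bridge block is swapped with another, each non-bridge $2$-connected block is $\sigma$-invariant, and $\sigma$ still restricts to a hyperelliptic involution on it by \cite[Theorem~5.19]{BakerNorine}. I construct $\tau$ on the non-bridge blocks exactly as in Lemma~\ref{lem:tauHyperelliptic}, and on a bridge block I take $\tau$ to be the identity on an annular neighborhood of the separating curve $\ell_e$. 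Since $\ell_e$ is separating it is null-homologous, so $H_1(\Sigma_g,\Z)$ is the direct sum of the homologies of the non-bridge pieces; the glued $\tau$ is therefore orientation-preserving and acts as $-I$ on all of $H$, i.e.\ is again a hyperelliptic quasi-involution. Conjugation by $\tau$ realizes $\sigma$ on the basis of non-bridge twists, and the computation of the previous paragraph applies verbatim.

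The main obstacle is the global construction of $\tau$ in the presence of bridges and cut vertices, together with the verification that $\sigma$ induces a hyperelliptic involution on each block so that Lemma~\ref{lem:tauHyperelliptic} can be invoked piece by piece. Once $\tau$ exists, the heart of the argument — that conjugation by $\tau$ realizes the permutation $\sigma$ on the twist lattice and that $\nu(\Gamma)$ is $\sigma$-invariant because $c_e=c_{\sigma(e)}$ for every non-separating edge — is uniform and essentially formal. I expect the homological bookkeeping (that $H_1(\Sigma_g,\Z)$ splits over the $2$-edge-connected pieces and that the bridge annuli can be filled in compatibly with the boundary behavior of the blockwise maps) to be the only genuinely technical point.
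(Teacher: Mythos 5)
Your argument is correct, and its core coincides with the paper's: after reducing to the stable case via Lemma~\ref{lem:tropEquivalentCeresa}, both you and the paper invoke Lemma~\ref{lem:tauHyperelliptic} to get $\tau$ with $\tau(\ell_e)=\ell_{\sigma(e)}$, observe (via Lemma~\ref{lem:propertiesDehnTwist}(1)) that conjugation by $\tau$ permutes the twists according to $\sigma$, and conclude from the edge classification and $c_e=c_{\sigma(e)}$ that $\nu(\Gamma)$ lies in the $\sigma$-fixed subspace, which is $\pB_{\tau}(\bG)_{\R}$. Where you genuinely diverge is the non-$2$-edge-connected case. The paper disposes of it with a one-line reduction: pass to the $2$-edge connectivization $\bG^2$ obtained by contracting all separating edges; since separating twists die in $\cG_g=\Gamma_g/\cK_g$, one has $\pB(\bG)=\pB(\bG^2)$ and $\nu(\Gamma)=\nu(\Gamma^2)$, so Ceresa triviality of $\Gamma$ and of $\Gamma^2$ are literally the same statement and Lemma~\ref{lem:tauHyperelliptic} applies as stated. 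You instead re-enter the proof of that lemma and extend its gluing construction, inserting identity annuli over bridges. Your sketch can be made to work --- bridges are indeed fixed pointwise by $\sigma$, all bridge curves and block-boundary curves are null-homologous, so $H$ is spanned by the block contributions and your glued map acts as $-I$ --- but it carries technical debt the paper's reduction avoids: outside the lemma's $2$-edge-connected hypothesis you must re-justify that every block is $\sigma$-invariant, that $\sigma$ restricts to a hyperelliptic involution on each block (the citation of \cite[Theorem~5.19]{BakerNorine} is made in the lemma only for blocks of a $2$-edge-connected graph), and that the polygon construction goes through for blocks that need not be stable as standalone graphs. The paper's route gets all of this for free by exploiting that neither $\pB(\bG)$ nor $\nu(\Gamma)$ sees separating edges at all; your route, in exchange, produces an explicit quasi-involution on the original surface satisfying $\tau(\ell_e)=\ell_{\sigma(e)}$ for every edge, bridges included, which is slightly more than the proposition requires.
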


\begin{proof}
By Lemma \ref{lem:tropEquivalentCeresa}, we may assume that $\Gamma$ is stable. Denote by $\bG^2$ (resp. $\Gamma^2$) the 2-edge connectivization of $\bG$ (resp. $\Gamma$) (this is obtained by contracting all separating edges, see \cite[Definition~2.3.6]{CaporasoViviani}). Because $\pB(\bG) = \pB(\bG^2)$  and $\nu(\Gamma)=\nu(\Gamma^2)$, we may assume that $\Gamma$ is 2-edge connected.  Let $\tau$ be the hyperelliptic involution from  Lemma \ref{lem:tauHyperelliptic}. If $e$ is a separating edge, then $T_{\ell_e}$ is a separating twist, which is trivial in $\pB(\bG)$. If $\{e,f\}$ is a pair of separating edges, then $T_{\ell_e} + T_{\ell_f} \in \pB_{\tau}(\bG)$. If $e$ is any other edge, then $T_{\ell_e}\in \pB_{\tau}(\bG)$.  Decompose $\nu(\Gamma)$ as 
\begin{equation*}
\nu(\Gamma) = \sum (c_eT_{\ell_e} + c_fT_{\ell_f}) + \sum c_eT_{\ell_e}    
\end{equation*}
where the sum on the left is over all separating pairs, and the sum on the right is over all non-separating edges not in a separating pair.  Because $c_e=c_f$ whenever $\{e,f\}$ is a separating pair, we have that  $\nu(\Gamma)\in \pB_{\tau}(\bG)_{\R}$, and hence $\Gamma$ is Ceresa trivial. 
\end{proof}

\section{A finite subgroup of $H^1(\Z,L)$}
\label{sec-algebra-wedge-H}

\subsection{Filtrations on $H^1(\Z,\wedge^kH)$}
\label{subsec:FiltrationsOnH1} 

In this section we set up a rather general framework for abelian groups with a unipotent automorphism, which we will apply in the case of the singular homology of a genus-$g$ topological surface acted upon by a multitwist.

Let $H$ be a finitely generated free $\mathbb{Z}$-module and $\delta\in \SL(H)$ be an element such that $(\delta-I)^2 = 0$.  We consider the action of the cyclic group $\langle\delta\rangle \cong \Z$ on $H$, which induces an action of $\langle \delta \rangle$ on $\wedge^k H$ for any $k \geq 0$.
Let $Y$ be the saturation of $\image(\delta-I)$ in $H$. By the hypothesis on $\delta$, we have $Y\leq \ker(\delta-I)$, i.e., $\delta$ acts trivially on $Y$.   Consider the following descending filtration on $\wedge^kH$:
\begin{equation}
\label{eq:Ffiltration}
F_{q}\wedge^kH = (\wedge^qY) \wedge (\wedge^{k-q} H). 
\end{equation}
Note that $F_{q}\wedge^kH$ is saturated in $\wedge^kH$, so the graded piece $\gr_q^F\wedge^kH := F_{q}\wedge^kH / F_{q+1}\wedge^kH$ is torsion-free. The following Lemma shows that $(\delta-I)(F_{q}\wedge^kH) \leq F_{q+1}\wedge^kH$ for any $k \ge q \ge 0$. 
\begin{lemma}
	\label{lem:deltaISimpleWedge}
	For $y\in \wedge^{q}Y$ and $h=h_1\wedge\ldots\wedge h_{k-q} \in \wedge^{k-q}H$, 
	\begin{equation*}
	(\delta-I)(y \wedge h) = \sum_{i=1}^{k-q} y\wedge h_1 \wedge \ldots \wedge (\delta-I) h_i \wedge \ldots \wedge h_{k-q} \mod F_{q+2}\wedge^k H.
	\end{equation*} 
	In particular, $(\delta-I)(F_q\wedge^kH) \leq F_{q+1}\wedge^kH$.
\end{lemma}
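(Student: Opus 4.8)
The plan is to exploit that $\delta$ acts on the exterior algebra $\wedge^\bullet H$ as a \emph{ring} automorphism, so that $\delta(a\wedge b) = \delta(a)\wedge\delta(b)$, and to combine this with the two structural facts already recorded: $\image(\delta-I)\subseteq Y$ and $\delta$ acts trivially on $Y$. First I would observe that, since $\delta$ fixes $Y$ pointwise, it fixes $\wedge^q Y$ elementwise; hence for $y\in\wedge^q Y$ we have $\delta(y\wedge h) = \delta(y)\wedge\delta(h) = y\wedge\delta(h)$, and therefore $(\delta-I)(y\wedge h) = y\wedge(\delta-I)(h)$. This reduces the entire claim to understanding $(\delta-I)(h)$ for the decomposable $h=h_1\wedge\cdots\wedge h_{k-q}$.

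Next I would write $\delta h_i = h_i + (\delta-I)h_i$ and expand the product $\delta(h)=\bigwedge_{i}\bigl(h_i+(\delta-I)h_i\bigr)$ by multilinearity of the wedge, grouping the resulting monomials according to the subset $S\subseteq\{1,\dots,k-q\}$ of indices at which the factor $(\delta-I)h_i$ is selected. The term $S=\emptyset$ cancels against $-h$ in $(\delta-I)(h)$, the terms with $|S|=1$ reassemble into the derivation-type sum $\sum_i h_1\wedge\cdots\wedge(\delta-I)h_i\wedge\cdots\wedge h_{k-q}$ appearing in the statement, and every remaining term has $|S|\geq 2$ and hence contains at least two factors drawn from $\image(\delta-I)\subseteq Y$, placing it in $(\wedge^2 Y)\wedge(\wedge^{k-q-2}H)$.

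Wedging back with $y\in\wedge^q Y$, the $|S|\geq 2$ contributions land in $(\wedge^{q+2}Y)\wedge(\wedge^{k-q-2}H)=F_{q+2}\wedge^k H$, which gives exactly the displayed congruence. For the \emph{in particular}, each summand of the main term carries the single factor $(\delta-I)h_i\in Y$; moving it (up to sign) next to $y$ exhibits that summand as an element of $(\wedge^{q+1}Y)\wedge(\wedge^{k-q-1}H)=F_{q+1}\wedge^k H$, and since $F_{q+2}\wedge^k H\subseteq F_{q+1}\wedge^k H$ the whole expression lies in $F_{q+1}\wedge^k H$. Finally, because $F_q\wedge^k H$ is $\Z$-spanned by decomposables $y\wedge h$ of the above shape, $\Z$-linearity upgrades the pointwise statement to $(\delta-I)(F_q\wedge^k H)\leq F_{q+1}\wedge^k H$.

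The argument is elementary, and I do not expect a genuine obstacle: the whole content sits in the two observations that $\delta$ is multiplicative on $\wedge^\bullet H$ and that $\image(\delta-I)$ lands in $Y$, where $\delta$ is trivial. The only points demanding a little care are the sign bookkeeping when permuting the lone $Y$-factor $(\delta-I)h_i$ past the remaining $h_j$ to recognize membership in $F_{q+1}$, and the remark that $F_q\wedge^k H$ is generated by decomposable elements so that checking the identity on such generators is enough.
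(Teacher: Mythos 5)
Your proof is correct and is essentially the paper's own argument: the paper likewise uses $\delta y = y$ to write $(\delta-I)(y\wedge h) = y\wedge(\delta-I+I)h_1\wedge\cdots\wedge(\delta-I+I)h_{k-q} - y\wedge h$ and expands multilinearly, with the empty-selection term cancelling, the single-selection terms giving the derivation sum, and the multi-selection terms landing in $F_{q+2}\wedge^k H$ because each factor $(\delta-I)h_i$ lies in $Y$. Your additional remarks (sign bookkeeping for the \emph{in particular} clause, and reduction to decomposable generators) only make explicit what the paper leaves implicit.
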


\begin{proof}
Because $\delta y= y$, we can write
\begin{equation*} 
(\delta - I)(y\wedge h) = y\wedge (\delta - I + I)h_1 \wedge \ldots \wedge (\delta - I + I)h_{k-q} - y\wedge h
\end{equation*}
and expand the latter as
\begin{align*}
 &\sum_{i=1}^{k-q}  y \wedge h_1 \wedge \ldots \wedge (\delta -I) h_i \wedge \ldots \wedge h_{k-q} \\
&+ \sum_{1\leq i<j\leq k-q}  y \wedge h_1 \wedge \ldots \wedge (\delta -I) h_i \wedge \ldots \wedge (\delta-I) h_j \wedge \ldots \wedge h_{k-q} + \ldots \qedhere
\end{align*}
\end{proof}

This means, in particular, that $\delta-I$ induces a map  $\gr_{q-1}^F\wedge^kH \ra \gr_{q}^F\wedge^kH$ for all $q$.  While these maps are typically not surjective, what we will see in the lemma below is that at least half of them are surjective {\em rationally}; that is, their cokernels are finite.

\begin{lemma}
	\label{lem:GradedSurjective}
	The map
	\begin{equation*}
	(\delta-I)_{\Q}: \gr_{q-1}^F\wedge^kH_{\Q} \ra \gr_{q}^F\wedge^kH_{\Q}
	\end{equation*}
	is surjective for  $q > k/2$.
\end{lemma}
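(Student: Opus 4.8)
The plan is to work entirely over $\Q$ and to identify the induced graded map with the raising operator of a Lefschetz-type $\mathfrak{sl}_2$-action on a bigraded exterior algebra, where surjectivity above the middle weight is a standard fact from representation theory.

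First I would record the rational structure of $\delta - I$. Writing $V = H_{\Q}$ and $W = Y_{\Q}$, the hypothesis $(\delta-I)^2 = 0$ together with the fact that $Y$ is the saturation of $\image(\delta-I)$ gives $\image(\delta-I)_{\Q} = W \subseteq \ker(\delta-I)_{\Q}$. Hence $\delta-I$ induces a surjection $\bar{d}\colon V/W \to W$ (it kills $W$ and lands in $W$), whose kernel I call $U$; choosing a section yields a splitting $V/W = U \oplus V'$ with $\bar{d}|_{V'}\colon V' \iso W$ and $\bar{d}|_{U} = 0$.

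Next I would identify the graded pieces. Since $F_{q}\wedge^kV = (\wedge^qW)\wedge(\wedge^{k-q}V)$ and any factor lying in $W$ raises the filtration level, there is a canonical isomorphism $\gr_q^F\wedge^kV \cong \wedge^qW \otimes \wedge^{k-q}(V/W)$. Under this identification, Lemma~\ref{lem:deltaISimpleWedge} shows that the induced map $\gr_{q-1}^F \to \gr_q^F$ is the ``transfer'' operator that removes one factor $h_i$ from the $\wedge^{\bullet}(V/W)$ tensorand, applies $\bar{d}$, and wedges the result $\bar{d}(h_i)\in W$ into the $\wedge^{\bullet}W$ tensorand (with Koszul signs). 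Because $\bar{d}$ annihilates $U$, this operator acts as the identity on the $\wedge^{\bullet}U$ spectator factor, so using $\wedge^{k-q}(V/W) = \bigoplus_a \wedge^aU \otimes \wedge^{k-q-a}V'$ the problem splits, for each fixed $a$, into the transfer map on $\wedge^{\bullet}Z \otimes \wedge^{\bullet}Z$ after the identification $Z := W \cong V'$. Then I would invoke the $\mathfrak{sl}_2$-structure: fixing a basis of $Z$, let $e$ be our transfer operator (create in the left factor, annihilate in the right) and $f$ the opposite transfer; a short computation with fermionic creation/annihilation operators gives $[e,f] = N^L - N^R$, so $h := [e,f]$ acts on $\wedge^{q'}Z \otimes \wedge^{p'}Z$ as the scalar $q' - p'$. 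The relevant summand $\wedge^{q-1}Z \otimes \wedge^{k-q-a+1}Z \to \wedge^qZ \otimes \wedge^{k-q-a}Z$ is then the raising operator $e$ landing in weight $2q - k + a$. Decomposing into irreducibles, $e\colon M_{\lambda}\to M_{\lambda+2}$ can fail to be surjective only when the target weight $\lambda+2$ is the lowest weight of some constituent, which is $\leq 0$; hence $e$ is surjective whenever the target weight is strictly positive. For $q > k/2$ one has $2q-k+a \geq 2q-k > 0$ for every $a \geq 0$, so each summand is surjective and the claim follows.

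The main obstacle I anticipate is the bookkeeping in the last two steps: pinning down the precise form of the induced differential from Lemma~\ref{lem:deltaISimpleWedge} with correct signs, and verifying the $\mathfrak{sl}_2$-commutation relation so that the weight $q'-p'$ is correctly matched to the numerical threshold $q > k/2$. It is worth noting that the boundary case $q = k/2$ produces target weight $0$, which is exactly where surjectivity can genuinely fail (trivial constituents are not hit), so the hypothesis $q > k/2$ is sharp. The representation-theoretic input itself is routine once the action is set up.
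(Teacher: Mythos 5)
Your proof is correct, and it takes a genuinely different route from the paper's. The paper argues entirely by hand: it chooses rational preimages $x_i$ of the $y_i$, forms the auxiliary sums $\mathfrak{y}(a,b)$ (replace $a$ of the $y_i$ by $x_i$ and apply $\delta-I$ to $b$ of the $h_j$), derives from Lemma~\ref{lem:deltaISimpleWedge} the recursion
\begin{equation*}
(\delta-I)_{\Q}\,\mathfrak{y}(a,b) \equiv (q-a+1)\cdot \mathfrak{y}(a-1,b) + (b+1)\cdot \mathfrak{y}(a,b+1) \bmod F_{q-a+b+2}\wedge^k H_{\Q},
\end{equation*}
and then runs a downward induction on $p$ to show each $\mathfrak{y}(p,p)$, in particular $\mathfrak{y}(0,0)=y\wedge h$, lies in the image modulo $F_{q+1}$; the hypothesis $q>k/2$ enters through the nonvanishing of the integers $2q-k$ and $q-p$. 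You instead package this structure representation-theoretically: split $V/W=U\oplus V'$, identify the graded map, summand by summand in the $U$-degree $a$, with (up to the harmless Koszul sign $(-1)^a$) the raising operator $e\otimes\id$ on $\wedge^{\bullet}Z\otimes\wedge^{\bullet}Z$ for $Z=W\cong V'$, verify $[e,f]=N^L-N^R$, and quote the standard fact that $e$ surjects onto weight spaces of strictly positive weight; the threshold $q>k/2$ is exactly positivity of the target weight $2q-k+a$. Your identifications of the graded pieces, of the induced map, and the commutator computation are all correct, so there is no gap. In fact the two arguments are the same computation in different clothing: the coefficients $(q-a+1)$ and $(b+1)$ in the paper's recursion are precisely the matrix coefficients of your $\mathfrak{sl}_2$-action, and the $\mathfrak{y}(a,b)$ are, up to scalar, what you get from $y\wedge h$ by applying powers of $f$ and $e$. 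What each buys: the paper's version is elementary, self-contained, and uses no complete reducibility (which, note, is available over $\Q$ since we are in characteristic zero, or can be invoked after extending scalars); yours explains conceptually why the threshold is the middle weight $k/2$, gives sharpness for free (target weight $0$ is exactly where trivial constituents escape the image, matching the paper's remark following the lemma), and as a bonus yields injectivity of the graded maps on negative weight spaces, which in the maximal-rank case $U=0$ recovers with no extra work the injectivity assertions of Proposition~\ref{prop:gradedSurjectiveMaximalRank} that the paper establishes separately via the matrix analysis of Lemma~\ref{lem:Aij}.
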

	
\begin{proof}
Let $y = y_1 \wedge \ldots \wedge y_{q} \in \wedge^qY_{\Q}$  and $h =  h_1 \wedge \ldots \wedge h_{k-q} \in \wedge^{k-q}H_{\Q}$.  It suffices to show that $y \wedge h$ lies in the image of $(\delta-I)_\Q\bmod F_{q+1}\wedge^kH_{\Q}$ for all such $y,h$. Choose $x_i\in H_{\Q}$ such that $(\delta-I) x_i = y_i$. For $I \in {[q] \choose p}$, let $\hat{y}_{I}$ be obtained from $y$ by replacing $y_{i}$ with $x_{i}$ for each $i \in I$.  Similarly, for $J \in {[k-q]\choose p}$  let $\hat{h}_J$ be obtained from $h$ by replacing $h_{j}$ with $(\delta-I) h_{j}$ for each $j\in J$.  We define 
\begin{equation*}
\mathfrak{y}(a,b) = \sum_{I\in  {[q] \choose a},J \in  {[k-q] \choose b}} \hat{y}_{I} \wedge \hat{h}_{J} \in F_{q-a+b}\wedge^kH_{\Q},
\end{equation*}
Note that $\mathfrak{y}(a,b) \neq 0$ only if $0 \leq a\leq q$ and $0\leq b \leq k-q$, and $\mathfrak{y}(0,0) = y \wedge h$.
By Lemma \ref{lem:deltaISimpleWedge}, 
\begin{equation*}
(\delta-I)_{\Q} (\hat{y}_{I} \wedge \hat{h}_{J}) \equiv 
\sum_{i\in I} \hat{y}_{I\setminus\{i\}} \wedge \hat{h}_{J} + 
\sum_{j\notin J} \hat{y}_{I} \wedge \hat{h}_{J\cup \{j\}} \bmod F_{q-a+b+2}\wedge^kH_{\Q}.
\end{equation*}
In particular, 
\begin{equation}
\label{eq:deltaISimpleyhhatSum}
(\delta-I)_{\Q} (\mathfrak{y}(a,b)) \equiv (q-a+1)\cdot \mathfrak{y}(a-1,b) + (b+1)\cdot \mathfrak{y}(a,b+1)
\bmod F_{q-a+b+2}\wedge^kH_{\Q}.
\end{equation}

\medskip

\noindent \textbf{Claim}: 
	For $0\leq p\leq k-q$, $\mathfrak{y}(p,p) $ is in  $\image(\delta-I)_{\Q} \bmod F_{q+1}\wedge^kH_{\Q}$.

\medskip

 The case $p=0$ is exactly the statement that $y\wedge h \in \image (\delta-I)_{\Q} \bmod F_{q+1}\wedge^kH_{\Q}$. We will proceed by downward induction on $p$. Because $\mathfrak{y}(k-q+1, k-q+1) = 0$,  Equation \eqref{eq:deltaISimpleyhhatSum} yields
\begin{equation*}
(\delta-I)_{\Q}\left(\mathfrak{y}(k-q+1, k-q)\right) 
\equiv (2q-k) \cdot \mathfrak{y}(k-q, k-q) \bmod F_{q+1}\wedge^kH_{\Q}
\end{equation*}
and therefore the claim holds when $p=q$, noting that $2q > k$.   Assuming  the Claim is true for $p+1$, we will show that it is true for $p$. Again by Equation \ref{eq:deltaISimpleyhhatSum},
\begin{equation*}
(\delta-I)_{\Q}\left(\mathfrak{y}(p+1,p) \right)  \equiv 
(q-p)\cdot\mathfrak{y}(p,p) +  (p+1) \cdot\mathfrak{y}(p+1,p+1) \bmod F_{q+1}\wedge^kH_{\Q}.
\end{equation*}
By the inductive hypothesis, $\mathfrak{y}(p+1,p+1)$ is in the image of $(\delta-I)_{\Q}\bmod F_{q+1}\wedge^kH_{\Q}$, and therefore so is $(q-p)\mathfrak{y}(p,p)$; by the hypothesis that $q > k/2$, we have $q-p \neq 0$, so $\mathfrak{y}(p,p)$ is in the image of $(\delta-I)_{\Q}\bmod F_{q+1}\wedge^kH_{\Q}$ and we are done.
\end{proof}

\begin{remark}
In the case where $\dim H = 2\dim Y$, the largest possible dimension of $Y$, the bound $q>k/2$ in Lemma \ref{lem:GradedSurjective} is sharp. Set $d=\dim Y$, $e = \dim H$, and $u(q) = \dim(\gr_q^F\wedge^kH)$. Then 
\begin{equation*}
    u(q) = {d \choose q} {e-d\choose k-q}.
\end{equation*}
 When $e=2d$ and $0\leq q\leq k/2$
\begin{equation*}
    \frac{u(q)}{u(q-1)} = \frac{(d-q+1)(k-q+1)}{q(d-k+q)} \geq  \frac{(d-q+1)(q+1)}{q(d-q)}  > 1
\end{equation*}
and therefore $u$ is a strictly increasing function on this interval. This means that  $(\delta-I)_{\Q}$ as in Lemma \ref{lem:GradedSurjective} cannot be surjective.
\end{remark}

We denote by $A_q(\delta)$ and $B_q(\delta)$ the groups:
\begin{align*}
    A_q(\delta) &= \image( H^1(\langle \delta \rangle, F_{q}\wedge^{2q-1}H) \to H^1(\langle \delta \rangle, \wedge^{2q-1}H)), \\
    B_q(\delta) &= \coker(\delta-I:\gr_{q-1}^F\wedge^{2q-1}H \to \gr_{q}^F\wedge^{2q-1}H).
\end{align*}

\begin{proposition}	
\label{prop:AdeltaBdeltaFinite} 
We have isomorphisms of groups
\begin{equation*}
A_q(\delta) \cong \frac{F_q\wedge^{2q-1}H}{((\delta-I)\wedge^{2q-1}H) \cap (F_q\wedge^{2q-1}H)}, \hspace{10pt} B_q(\delta) \cong \frac{F_q\wedge^{2q-1}H}{((\delta-I)F_{q-1}\wedge^{2q-1}H) + (F_{q+1}\wedge^{2q-1}H)}.
\end{equation*}
In particular, $A_q(\delta)$ and $B_q(\delta)$ are finite. 
\end{proposition}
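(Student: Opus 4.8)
The plan is to treat the two isomorphisms as formal consequences of the identification of the group cohomology of $\langle\delta\rangle\cong\Z$, and then to extract finiteness from Lemma~\ref{lem:GradedSurjective}. First I would record that for any module $M$ with an action of $\langle\delta\rangle\cong\Z$ one has $H^1(\langle\delta\rangle,M)\cong M/(\delta-I)M$, since $\Z$ has cohomological dimension one. Applying this with $M=\wedge^{2q-1}H$ and with $M=F_q\wedge^{2q-1}H$, the map defining $A_q(\delta)$ is the one induced on coinvariants by the inclusion $F_q\wedge^{2q-1}H\hookrightarrow\wedge^{2q-1}H$, so its image is
\[
\frac{F_q\wedge^{2q-1}H+(\delta-I)\wedge^{2q-1}H}{(\delta-I)\wedge^{2q-1}H},
\]
and the second isomorphism theorem rewrites this as the asserted quotient. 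For $B_q(\delta)$ I would unwind $\coker(\delta-I\colon\gr_{q-1}^F\wedge^{2q-1}H\to\gr_{q}^F\wedge^{2q-1}H)$ directly: the image of the graded map is $((\delta-I)F_{q-1}\wedge^{2q-1}H+F_{q+1}\wedge^{2q-1}H)/F_{q+1}\wedge^{2q-1}H$ sitting inside $\gr_q^F\wedge^{2q-1}H=F_q\wedge^{2q-1}H/F_{q+1}\wedge^{2q-1}H$, and forming the quotient gives exactly the claimed description. No real difficulty enters in either identity.

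Finiteness of $B_q(\delta)$ is then immediate: both $\gr_{q-1}^F\wedge^{2q-1}H$ and $\gr_q^F\wedge^{2q-1}H$ are finitely generated free $\Z$-modules (they are torsion-free, as noted after \eqref{eq:Ffiltration}), and by Lemma~\ref{lem:GradedSurjective} applied with $k=2q-1$ and graded index $q$ (which satisfies $q>k/2$) the map $\delta-I$ between them becomes surjective after tensoring with $\Q$. A rationally surjective map of finitely generated free $\Z$-modules has finite cokernel, so $B_q(\delta)$ is finite.

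The main point is the finiteness of $A_q(\delta)$. By the first isomorphism, $A_q(\delta)$ is a quotient of the free module $F_q\wedge^{2q-1}H$, hence it is finite precisely when it has rank zero, i.e. when $F_q\wedge^{2q-1}H_\Q\subseteq\image(\delta-I)_\Q$, where now $\delta-I$ acts on all of $\wedge^{2q-1}H_\Q$. I would establish this containment by downward induction on $j\geq q$, proving $F_j\wedge^{2q-1}H_\Q\subseteq\image(\delta-I)_\Q$; the base case is any $j$ large enough that $F_j\wedge^{2q-1}H=0$. For the inductive step, given $x\in F_j\wedge^{2q-1}H_\Q$ with $j\geq q$, Lemma~\ref{lem:GradedSurjective} (applicable since $j>(2q-1)/2$) produces $w\in F_{j-1}\wedge^{2q-1}H_\Q$ with $(\delta-I)w-x\in F_{j+1}\wedge^{2q-1}H_\Q$; by the inductive hypothesis this difference lies in $\image(\delta-I)_\Q$, and therefore so does $x=(\delta-I)w-\big((\delta-I)w-x\big)$. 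Taking $j=q$ yields the required containment.

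The one thing to watch, which is also the conceptual heart of the argument, is that every graded surjectivity invoked uses an index $\geq q>k/2$, so Lemma~\ref{lem:GradedSurjective} does apply at each stage of the induction; this is exactly why the odd degree $2q-1$, placing $q$ just above the threshold $k/2$, is the correct degree in which to run the argument. Everything else is bookkeeping with the filtration, and I would expect the write-up to be short.
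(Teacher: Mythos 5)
Your proposal is correct and takes essentially the same route as the paper's proof: the same standard identification $H^1(\langle\delta\rangle,M)\cong M/(\delta-I)M$ gives both isomorphisms, and finiteness comes from Lemma~\ref{lem:GradedSurjective} exactly as in the paper, where your downward induction on the filtration index is just an explicit spelling-out of the paper's one-line observation that rational surjectivity of the graded maps for all indices $i\geq q$ places $F_q\wedge^{2q-1}H_{\Q}$ inside the rational image of $\delta-I$.
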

\begin{proof}
It is a standard fact that 
\begin{equation*}
	H^1(\langle \delta \rangle, \wedge^kH) \xrightarrow{}  \wedge^kH/(\delta-I)\wedge^kH \hspace{20pt} [\zeta] \mapsto \zeta(\delta)
\end{equation*}
is an isomorphism. This yields the isomorphism involving $A_q(\delta)$, and the one involving $B_q(\delta)$ is clear. 
Because each $(\delta-I)_{\Q}:\gr_{i-1}^F\wedge^{2q-1}H_{\Q} \to \gr_{i}^F\wedge^{2q-1}H_{\Q}$ is surjective for $i\geq q$ by Lemma \ref{lem:GradedSurjective}, we see that   $F_q\wedge^{2q-1}H_{\Q}$ is contained in $(\delta-I)(F_{q-1}\wedge^{2q-1}H_{\Q})$. Therefore,  $A_q(\delta)$ and $B_q(\delta)$ are  finite.
\end{proof}

\begin{proposition}
\label{prop:intersectDeltaFq}
If $\delta-I:\gr_{q-2}^F\wedge^kH \to \gr_{q-1}^F\wedge^kH$ is injective, then 
\begin{equation*}
(\delta-I)(F_{q-2}\wedge^kH) \cap F_{q} \wedge^kH = (\delta-I)F_{q-1}\wedge^kH.
\end{equation*}
\end{proposition}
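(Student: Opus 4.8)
The plan is to prove the two inclusions separately, the nontrivial content being a diagram chase on the associated graded pieces that converts the injectivity hypothesis into a lifting statement.

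The inclusion $\supseteq$ is immediate and requires no hypothesis. Since $F_{q-1}\wedge^kH \subseteq F_{q-2}\wedge^kH$, we have $(\delta-I)F_{q-1}\wedge^kH \subseteq (\delta-I)(F_{q-2}\wedge^kH)$; and by Lemma~\ref{lem:deltaISimpleWedge} the operator $\delta-I$ carries $F_{q-1}\wedge^kH$ into $F_{q}\wedge^kH$, so $(\delta-I)F_{q-1}\wedge^kH \subseteq F_q\wedge^kH$ as well. Intersecting gives the desired containment.

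For the reverse inclusion $\subseteq$, the key observation is that the injectivity hypothesis is precisely what is needed to promote a preimage living in $F_{q-2}\wedge^kH$ to one living in $F_{q-1}\wedge^kH$. I would take an arbitrary element $w$ of the left-hand side and write $w = (\delta-I)v$ with $v \in F_{q-2}\wedge^kH$, keeping in mind the additional constraint $w \in F_q\wedge^kH$. By Lemma~\ref{lem:deltaISimpleWedge}, $\delta-I$ induces a well-defined map $\gr_{q-2}^F\wedge^kH \to \gr_{q-1}^F\wedge^kH$, and this induced map sends the class $\bar v$ of $v$ in $\gr_{q-2}^F\wedge^kH$ to the class of $w=(\delta-I)v$ in $\gr_{q-1}^F\wedge^kH = F_{q-1}\wedge^kH/F_q\wedge^kH$. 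But since $w \in F_q\wedge^kH$, that class is zero, so the induced map kills $\bar v$.

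At this point the injectivity assumption forces $\bar v = 0$, i.e.\ $v \in F_{q-1}\wedge^kH$, whence $w = (\delta-I)v \in (\delta-I)F_{q-1}\wedge^kH$, completing the inclusion. I do not anticipate a genuine obstacle here: once the problem is phrased on the graded pieces, the argument is a one-line consequence of injectivity. The only points demanding care are checking that the induced map on $\gr^F$ is well defined (which is exactly Lemma~\ref{lem:deltaISimpleWedge}) and correctly tracking that the hypothesis $w \in F_q\wedge^kH$ is what makes the relevant graded class vanish.
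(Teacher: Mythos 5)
Your proof is correct and follows essentially the same route as the paper's: both take $w=(\delta-I)v$ with $v\in F_{q-2}\wedge^kH$, note that $w\in F_q\wedge^kH$ means the induced map on graded pieces kills $\bar v$, invoke injectivity to get $v\in F_{q-1}\wedge^kH$, and deduce the reverse inclusion from Lemma~\ref{lem:deltaISimpleWedge}. Your write-up is just a more explicit version of the paper's terse argument, with the well-definedness of the induced map spelled out.
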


\noindent In particular, this means that if $\delta-I:\gr_{i-2}^F\wedge^{2q-1}H \to \gr_{i-1}^F\wedge^{2q-1}H$ is injective for all $i\leq q$, then $A_q(\delta) \cong F_q\wedge^{2q-1}H/(\delta-I)F_{q-1}\wedge^{2q-1}H$, and hence there is a natural surjection $A_q(\delta)\to B_q(\delta)$.

\begin{proof}
Suppose  $y\in (\delta-I)(F_{q-2}\wedge^kH) \cap F_{q}\wedge^kH$
and $y=(\delta-I)x$. By injectivity of $\delta-I:\gr_{q-2}^F\wedge^kH \to \gr_{q-1}^F\wedge^kH$ and the fact that $y\in F_q\wedge^kH$, we see that $x\equiv 0 \mod F_{q-1}\wedge^kH$, i.e., $y\in (\delta-I)F_{q-1}\wedge^kH$.  The other inclusion follows from Lemma \ref{lem:deltaISimpleWedge}. 
\end{proof}

In \S \ref{sec:examples}, we will need to show that certain classes in $H^1(\langle \delta \rangle, \wedge^k H)$ arising from topology are trivial, for which we will need the following explicit computation. 

\begin{proposition}
\label{prop:F1KernelDeltaITrivial}
Any element of $\wedge^k H$ of the form $y \wedge z_1 \wedge \ldots \wedge z_{k-1}$ where $y \in \image(\delta-I)$ and $z_i \in \ker(\delta-I)$ lies in $(\delta-I) \wedge^kH$.
\end{proposition}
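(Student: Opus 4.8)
The plan is to produce an explicit preimage under $\delta-I$. Since $y\in\image(\delta-I)$, I first choose $x\in H$ with $(\delta-I)x=y$; here $x$ may be taken in $H$ itself (not merely in $H_\Q$), because $\image(\delta-I)$ denotes the integral image of the map $H\to H$. I then set $w=x\wedge z_1\wedge\ldots\wedge z_{k-1}\in\wedge^k H$ and claim that $(\delta-I)w$ is exactly the element in question, which immediately exhibits it as lying in $(\delta-I)\wedge^k H$.

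To verify the claim I would use that $\delta$ acts on the exterior algebra $\wedge^\bullet H$ as a ring automorphism, so that $\delta(w)=\delta(x)\wedge\delta(z_1)\wedge\ldots\wedge\delta(z_{k-1})$. Since each $z_i\in\ker(\delta-I)$ is fixed by $\delta$, and $\delta(x)=x+(\delta-I)x=x+y$, the product collapses to $\delta(w)=w+y\wedge z_1\wedge\ldots\wedge z_{k-1}$. Subtracting $w$ gives $(\delta-I)w=y\wedge z_1\wedge\ldots\wedge z_{k-1}$, which is precisely the assertion.

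There is no serious obstacle: the point is simply that, because $\delta$ is multiplicative and fixes every $z_i$, every term in the expansion of $\delta(w)-w$ that would apply $\delta-I$ to some $z_i$ vanishes, leaving only the single surviving term in which $\delta-I$ hits $x$. One could alternatively read this off Lemma~\ref{lem:deltaISimpleWedge}, but that lemma is stated only modulo $F_{q+2}\wedge^k H$, whereas here the computation is exact, so arguing directly is cleaner. I would also remark that the global hypothesis $(\delta-I)^2=0$ is not needed for this particular statement: only the algebra-automorphism property of $\delta$, the condition $y\in\image(\delta-I)$, and the condition $z_i\in\ker(\delta-I)$ enter the argument.
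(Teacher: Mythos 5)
Your proof is correct and is essentially identical to the paper's: both choose an integral preimage $x$ of $y$, form $x\wedge z_1\wedge\ldots\wedge z_{k-1}$, and use that $\delta$ acts multiplicatively on wedges and fixes each $z_i$ to collapse $(\delta-I)(x\wedge z_1\wedge\ldots\wedge z_{k-1})$ to $y\wedge z_1\wedge\ldots\wedge z_{k-1}$. Your side remarks (integrality of $x$ and the non-necessity of $(\delta-I)^2=0$) are accurate but do not change the argument.
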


\begin{proof}
Choose $x \in H$ such that $(\delta-I)x = y$. Then
\begin{align*}
    (\delta-I)(x\wedge z_1\wedge\ldots \wedge z_{k-1}) =& \delta x \wedge \delta z_1 \wedge \ldots \wedge \delta z_{k-1} - x\wedge z_1\wedge \ldots \wedge z_{k-1} \\
    =& \delta x \wedge z_1 \wedge \ldots \wedge z_{k-1} - x\wedge z_1\wedge \ldots \wedge z_{k-1} \\
    =& y\wedge z_1 \wedge \ldots \wedge z_{k-1}.\qedhere
\end{align*}
\end{proof}

The main application of Proposition \ref{prop:AdeltaBdeltaFinite} will be in the case $k=3$ and we will denote $L=\wedge^3 H$ as before. For this reason, we simply write $A(\delta)$ and $B(\delta)$ for the subgroups $A_2(\delta)$ and $B_2(\delta)$, respectively. These groups are finite by Proposition \ref{prop:AdeltaBdeltaFinite}, and in particular any element of $H^1(\langle \delta \rangle, L)$ which lies in $A(\delta)$ is torsion.

\subsection{The maximal rank case}
\label{subsec:maximalRank}
An important subcase is that where $\dim Y$ is as large as possible, namely $\dim Y= g=\tfrac{1}{2}\dim H$. Because $Y\subset H$ is saturated, there is a subgroup $X\subset H$ such that $H=X\oplus Y$. Let $Q:X \to Y$ denote the restriction of $\delta-I$ to $X$ and  $q_1\leq\cdots\leq q_g$ its invariant factors, with $q_i | q_{i+1}$. Because $Q$ is rationally surjective, each $q_i$ is positive and  
\begin{equation*}
\coker(Q) \cong \prod_{i=1}^g \Z/(q_i). 
\end{equation*}
is a finite group.  Choose bases $\{x_1,\ldots,x_g\}$ of $X$ and $\{y_1,\ldots,y_g\}$ of $Y$ such that $Q(x_i) = q_iy_i$.  To compute $A(\delta)$, we decompose $L = \wedge^3H$ as the direct sum of 
\begin{align*}
    &V_1 = \Span\{x_i\wedge x_j \wedge x_k \, | \, i<j<k\}, \hspace{28pt} V_2 = \Span\{x_i\wedge x_j \wedge y_j \, | \, i\neq j\}, \\
    &V_3 = \Span\{x_i\wedge x_j \wedge y_k \, | \, i<j, k\neq i,j\},  \hspace{10pt} V_4 = \Span\{x_i\wedge y_i \wedge y_j \, | \, i\neq j\},\\
    &V_5 = \Span\{x_i\wedge y_j \wedge y_k \, | \, j<k, i\neq j,k\}, \hspace{10pt} V_6 = \Span\{y_i\wedge y_j \wedge y_k \, | \, i<j<k\},
\end{align*}
Let $A$ denote the matrix of $\delta-I$ with respect to the basis above, and $A_{ij}$ the block whose rows correspond to $V_i$ and columns correspond to $V_j$.

\begin{lemma}
\label{lem:Aij}
The submatrices $A_{ij}$ satisfy the following:
\begin{enumerate}
    \item $A_{13}$, $A_{24}$, and $A_{35}$ are nonsingular,
    \item $\coker(A_{24}) \cong \coker(Q)^{g-1}$,
    \item  $\coker(A_{35}) \cong \prod_{i<j<k}(\Z/(q_i))^2 \times \Z/(2q_jq_k/q_i)$,
    \item $\coker(A_{56}) \cong \prod_{i=1}^{g-2}(\Z/(q_i))^{{g-i\choose 2}}$
    \item $A_{12}, A_{14}, A_{15}, A_{16}, A_{23}, A_{25}, A_{26}, A_{34}, A_{36}, A_{45}, A_{46}$, and $A_{ij}$ for $i\geq j$ are all $0$.
\end{enumerate}
\end{lemma}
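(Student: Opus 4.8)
The plan is to compute $\delta-I$ explicitly on each of the six families of basis wedges, read off the block pattern to obtain (1) and (5), and then compute the three cokernels in (2)--(4) by exploiting the fact that each surviving block is itself block-diagonal over small subsets of indices. After diagonalizing $Q$ I have $\delta x_i = x_i + q_iy_i$ and $\delta y_i = y_i$, so $(\delta-I)x_i = q_iy_i$ and $(\delta-I)y_i = 0$. The six pieces are ordered by their number of $y$-factors, namely $0,1,1,2,2,3$, which is exactly the $F$-filtration degree; by Lemma~\ref{lem:deltaISimpleWedge} the operator $\delta-I$ strictly raises this degree, so the relevant blocks (the ones recording the induced maps between consecutive graded pieces $\gr^F$) are those sending $V_i$ to a $V_j$ one filtration-step higher. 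Expanding $\delta(a\wedge b\wedge c)=\delta a\wedge\delta b\wedge\delta c$ for each family and collecting the lowest-order (one extra $y$) terms, I would check that the only such nonzero blocks are $A_{13},A_{24},A_{35},A_{56}$: all other $A_{ij}$ with $i<j$ vanish, either because they skip a filtration level (e.g.\ $V_1\to V_5$) or because the $y$-index produced is forced distinct from the surviving $x$-indices (this kills $V_1\to V_2$ and $V_3\to V_4$), and $A_{ij}=0$ for $i\ge j$ since $\delta-I$ raises filtration. This is (5), and (1) follows since each surviving block is injective because the $q_i$ are nonzero (e.g.\ $A_{13}$ restricted to $\{i<j<k\}$ is the $3\times1$ column $(\pm q_i,\pm q_j,\pm q_k)^{\mathsf T}$).

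The organizing point for the cokernels is that each surviving block preserves the underlying index set, hence decomposes as a direct sum of small blocks indexed by $2$- or $3$-element subsets of $\{1,\dots,g\}$. For $A_{24}$, the block attached to $\{i,j\}$ sends $x_i\wedge x_j\wedge y_j\mapsto \pm q_i\,x_j\wedge y_j\wedge y_i$ and $x_i\wedge x_j\wedge y_i\mapsto \pm q_j\,x_i\wedge y_i\wedge y_j$, so it is $\operatorname{diag}(q_i,q_j)$ with cokernel $\Z/q_i\oplus\Z/q_j$; summing over all pairs and counting that each $\Z/q_m$ occurs $g-1$ times gives $\coker(A_{24})\cong\coker(Q)^{g-1}$, proving (2). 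For $A_{56}$, the block attached to $\{i<j<k\}$ is the $1\times3$ matrix with entries $q_i,q_j,q_k$ into $\Z\cdot(y_i\wedge y_j\wedge y_k)$; since $q_i\mid q_j\mid q_k$ its image is $q_i\Z$ and the cokernel is $\Z/q_i$, so grouping $3$-subsets by their minimum $i$ (there are $\binom{g-i}{2}$ of them) yields (4).

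The one genuine computation is (3). For a $3$-subset $\{i<j<k\}$ the block of $A_{35}$ is a $3\times3$ matrix $M$ between the three $V_3$-vectors (indexed by which index carries the $y$) and the three $V_5$-vectors (indexed by which carries the $x$); a direct expansion gives, up to signs,
\[
M=\begin{pmatrix} q_j & -q_k & 0\\ -q_i & 0 & -q_k\\ 0 & -q_i & q_j\end{pmatrix}.
\]
I would read off its Smith normal form from the gcds of its minors: the gcd of the entries is $q_i$ (using $q_i\mid q_j\mid q_k$), the gcd of the $2\times2$ minors is $q_i^2$, and $\det M=-2q_iq_jq_k$. Hence the invariant factors are $q_i,\,q_i,\,2q_jq_k/q_i$ (all integral, with the required divisibilities $q_i\mid q_i\mid 2q_jq_k/q_i$, again from $q_i\mid q_j\mid q_k$), so the block cokernel is $(\Z/q_i)^2\oplus\Z/(2q_jq_k/q_i)$, and summing over $3$-subsets gives (3). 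The hard part will be exactly this Smith-normal-form step: the determinant produces the somewhat surprising factor of $2$, which must be tracked with the correct signs (a careless sign choice makes the determinant collapse to $0$), and the middle and last invariant factors genuinely require the minor-gcd computation rather than being determined by the determinant alone.
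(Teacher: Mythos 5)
Your route is essentially the paper's: compute $\delta-I$ on the diagonalizing bases, observe that each surviving block decomposes over $2$- and $3$-element index subsets, and compute cokernels blockwise. Parts (2), (3), (4) are correct, and at two points you are actually more careful than the paper's own proof. The paper asserts that each row of $A_{13}$ has exactly one nonzero entry; the truth is what you write, namely that the block over $\{i<j<k\}$ is a column with the three entries $\pm q_i,\pm q_j,\pm q_k$ (which still gives full rank, since distinct triples hit disjoint sets of $V_3$-basis vectors). And for $A_{35}$ the paper simply states the invariant factors $q_i,q_i,2q_jq_k/q_i$ of its $3\times 3$ block, whereas your minor-gcd computation is the actual verification; your matrix is related to the paper's block
$\left(\begin{smallmatrix} 0&q_j&q_i\\ q_j&0&q_k\\ q_i&q_k&0\end{smallmatrix}\right)$
by signed permutations of rows and columns, so the Smith normal form, and hence the cokernel, is the same, including the factor of $2$.

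The genuine gap is in part (5): your principle that a block ``skipping a filtration level'' must vanish is false. Lemma \ref{lem:deltaISimpleWedge} gives $(\delta-I)F_qL\subseteq F_{q+1}L$, i.e.\ the filtration degree rises by \emph{at least} one, not by exactly one. Concretely,
\begin{align*}
(\delta-I)(x_i\wedge x_j\wedge x_k)\;=\;&\;q_i\,y_i\wedge x_j\wedge x_k+q_j\,x_i\wedge y_j\wedge x_k+q_k\,x_i\wedge x_j\wedge y_k\\
&+q_iq_j\,y_i\wedge y_j\wedge x_k+q_iq_k\,y_i\wedge x_j\wedge y_k+q_jq_k\,x_i\wedge y_j\wedge y_k\\
&+q_iq_jq_k\,y_i\wedge y_j\wedge y_k,
\end{align*}
where the second line lies in $V_5$ and the third in $V_6$; so $A_{15}\neq 0$ and $A_{16}\neq 0$, and likewise $A_{36}\neq 0$ because $(\delta-I)(x_i\wedge x_j\wedge y_k)$ contains $q_iq_j\,y_i\wedge y_j\wedge y_k$. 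These three vanishing claims cannot be proved because, as statements about the honest matrix of $\delta-I$, they are false. You should know this defect sits in the lemma itself: the paper's own justification (``the remaining listed matrices are $0$ because $\delta-I\colon X\to Y$ is diagonal'') fails for exactly the same three blocks. Statement and proof become correct precisely when $A_{ij}$ is read as the block of the induced map $\gr_q^FL\to\gr_{q+1}^FL$ on graded pieces, i.e.\ with entries computed modulo one further filtration step; this is also the only way the lemma is used afterwards (Propositions \ref{prop:gradedSurjectiveMaximalRank}, \ref{prop:BdeltaExplicit}, and \ref{prop:AtoB} invoke only the graded maps). Under that reading your degree argument is valid (those blocks are then zero by definition), but note that your two stated reasons still do not cover $A_{25}$ and $A_{46}$, which do not skip a level: these vanish because $(\delta-I)(x_i\wedge x_j\wedge y_j)= q_i\,x_j\wedge y_j\wedge y_i$ retains its repeated index and so lands in $V_4$ rather than $V_5$, and because $(\delta-I)$ annihilates $V_4$ outright, as $q_i\,y_i\wedge y_i\wedge y_j=0$. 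Since you promise a full expansion anyway these are easily patched, but as written your case analysis is not exhaustive.
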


\begin{proof}
The matrix $A_{13}$ is nonsingular because its rows each have exactly one nonzero entry. Indeed, the row corresponding to $x_i\wedge x_j\wedge x_k$ has $q_k$ in the column  corresponding to $x_i\wedge x_j\wedge y_k$, and 0 for the remaining entries. 
Next, $A_{24}$ is a square $g(g-1)$ matrix and   $(\delta-1)(x_i \wedge x_j \wedge y_j) = q_i \cdot y_i \wedge  x_j \wedge y_j \mod F_3L$. Therefore $A_{24}$ is nonsingular and its cokernel is of the desired form. 
Now consider $A_{35}$. Given $i<j<k$, set
\begin{align*}
    V_{ijk} &= \Span\{x_i\wedge x_j \wedge y_k,\, x_j\wedge x_k \wedge y_i,\, x_k\wedge x_i \wedge y_j\} \\
    W_{ijk} &=  \Span\{x_k\wedge y_i \wedge y_j,\,   x_i\wedge y_j \wedge y_k,\, x_j\wedge y_k \wedge y_i \}.
\end{align*}
The matrix $A_{35}$ may be arranged into block-diagonal form, where each block has rows indexed by $W_{ijk}$ and columns by $V_{ijk}$. 
With respect to the bases above, this block is
\begin{align*}
    \begin{pmatrix}
    0 & q_j & q_i \\
    q_j & 0 & q_k \\
    q_i & q_k & 0
    \end{pmatrix}
\end{align*}
whose invariant factors are $q_i,q_i,2q_jq_k/q_i$.  Therefore, $\coker A_{35}$ is isomorphic to the product of the $(\Z/(q_i))^2\times \Z/(q_jq_k/q_i)$. Because each block matrix is nonsingular, $A_{35}$ is also nonsingular. This completes the proof of (1), (2), and (3).

Now consider (4). Because $(\delta-I)(x_i\wedge y_j\wedge y_k) = q_i \cdot y_i\wedge y_j\wedge y_k$, each column of $A_{56}$ has exactly one nonzero entry. By only performing column operations, we may form a diagonal matrix $B_{56}$ from  $A_{56}$  so that each diagonal entry is the $\gcd$ of the integers in its corresponding row. Because the nonzero entries of the row in $A_{56}$ corresponding to $y_{i}\wedge y_{j} \wedge y_{k}$  are $q_i,q_j,q_k$, the corresponding diagonal entry in $B_{56}$ is $\gcd(q_i,q_j,q_k) = q_i$. From this, we see that $\coker(A_{56})$ has the desired form.

Finally consider (5). The matrices $A_{ij}$ for $i\geq j$ are all 0 because the matrix of $\delta-I$, with respect to the above decomposition, is strictly lower-triangular. The remaining listed matrices are 0 because  $\delta-I:X\to Y$ is diagonal with respect to the given bases.
\end{proof}

\begin{proposition}
\label{prop:gradedSurjectiveMaximalRank}
The maps $\delta-I:\gr_{q-1}^FL_{\Q} \to \gr_{q}^FL_{\Q}$ are surjective when $q=2,3$ and injective when $q=1,2$. In particular,
\begin{equation*}
A(\delta) \cong \frac{F_2L}{(\delta-I)F_1L}.
\end{equation*}
\end{proposition}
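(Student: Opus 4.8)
The plan is to read off all four statements from the block structure of $\delta-I$ established in Lemma \ref{lem:Aij}, together with the rational surjectivity already supplied by Lemma \ref{lem:GradedSurjective}. First I would record the identification of the graded pieces with the summands $V_i$: since $F_qL$ is spanned by those wedge monomials in the basis $\{x_\bullet,y_\bullet\}$ having at least $q$ factors drawn from $Y$, grading by the \emph{exact} number of $Y$-factors gives $\gr_0^FL\cong V_1$, $\gr_1^FL\cong V_2\oplus V_3$, $\gr_2^FL\cong V_4\oplus V_5$, and $\gr_3^FL\cong V_6$. Under these identifications the induced map $\delta-I:\gr_{q-1}^FL\to\gr_q^FL$ is exactly the corresponding collection of matrix blocks $A_{ij}$, by Lemma \ref{lem:deltaISimpleWedge}.

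For the surjectivity assertions ($q=2,3$) I would simply invoke Lemma \ref{lem:GradedSurjective} with $k=3$, since its hypothesis $q>k/2=3/2$ is precisely $q\geq 2$. For injectivity at $q=2$ I would combine this surjectivity with the dimension count $u(q)=\binom{g}{q}\binom{g}{3-q}$ of the Remark following Lemma \ref{lem:GradedSurjective} (valid here because $\dim H=2\dim Y$), which gives $\dim_\Q\gr_1^FL=u(1)=\binom{g}{1}\binom{g}{2}=\binom{g}{2}\binom{g}{1}=u(2)=\dim_\Q\gr_2^FL$; a surjection of $\Q$-vector spaces of equal finite dimension is an isomorphism, so $q=2$ injectivity is automatic. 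The only case that genuinely needs the finer matrix information is $q=1$, the map $\gr_0^FL_\Q\to\gr_1^FL_\Q$: this map is $(A_{12},A_{13}):V_1\to V_2\oplus V_3$, and since $A_{12}=0$ by Lemma \ref{lem:Aij}(5) while $A_{13}$ is nonsingular by Lemma \ref{lem:Aij}(1), it is injective. (As a check, the block method re-proves $q=2$ directly: $\gr_1^FL\to\gr_2^FL$ is block-diagonal with blocks $A_{24}$ and $A_{35}$, both nonsingular by Lemma \ref{lem:Aij}(1), because $A_{25}=A_{34}=0$.)

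For the displayed isomorphism I would chain Propositions \ref{prop:AdeltaBdeltaFinite} and \ref{prop:intersectDeltaFq}. Since $A(\delta)=A_2(\delta)$, Proposition \ref{prop:AdeltaBdeltaFinite} (with $2q-1=3$) gives $A(\delta)\cong F_2L/\big(((\delta-I)L)\cap F_2L\big)$. To rewrite the denominator as $(\delta-I)F_1L$ I would apply Proposition \ref{prop:intersectDeltaFq} with $q=2$, $k=3$, whose hypothesis is injectivity of $\delta-I:\gr_0^FL\to\gr_1^FL$; this was just proved. Its hypothesis is phrased integrally, but because the graded pieces are torsion-free, the rational injectivity above yields integral injectivity, so Proposition \ref{prop:intersectDeltaFq} applies and produces $((\delta-I)L)\cap F_2L=(\delta-I)F_1L$, which is the claim.

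The main, and essentially only, obstacle is the $q=1$ injectivity, where the explicit form of $\delta-I$ enters; the other three assertions reduce to a dimension count or a direct citation. The two subtleties to handle carefully are (i) matching the induced graded maps to the blocks $A_{ij}$ and, in particular, confirming the vanishings $A_{12}=A_{25}=A_{34}=0$ used above, and (ii) the torsion-free passage from rational to integral injectivity that legitimizes the appeal to Proposition \ref{prop:intersectDeltaFq}.
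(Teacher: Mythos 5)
Your proposal is correct and follows essentially the same route as the paper's own proof: surjectivity for $q=2,3$ via Lemma \ref{lem:GradedSurjective}, injectivity at $q=1$ from the nonsingularity of $A_{13}$ in Lemma \ref{lem:Aij}, injectivity at $q=2$ from surjectivity plus the equal-dimension count $g\binom{g}{2}$, and the displayed isomorphism by chaining Propositions \ref{prop:AdeltaBdeltaFinite} and \ref{prop:intersectDeltaFq}. The extra details you supply (the identification of graded pieces with the $V_i$, and the torsion-free passage from rational to integral injectivity needed to invoke Proposition \ref{prop:intersectDeltaFq}) are exactly the steps the paper leaves implicit.
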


\begin{proof}
The surjectivity claim follows from Lemma \ref{lem:GradedSurjective}. 
The matrix $A_{13}$ is nonsingular by Lemma \ref{lem:Aij}(1), therefore
$\delta-I:\gr_0^FL_{\Q} \to \gr_1^FL_{\Q}$ is injective. Because $\delta-I:\gr_1^FL_{\Q} \to \gr_2^FL_{\Q}$ is  surjective and both spaces have the same dimension (equal to $g{g\choose 2}$), it is also injective.
The last statement follows from Proposition \ref{prop:AdeltaBdeltaFinite} and Proposition \ref{prop:intersectDeltaFq}.
\end{proof}

\begin{proposition}
\label{prop:BdeltaExplicit}
We have an isomorphism
\begin{equation*}
 B(\delta) \cong 
  \coker(Q)^{g-1} \times \prod_{i<j<k}(\Z/(q_i))^{2} \times \Z/(2q_jq_k/q_i).
\end{equation*}
\end{proposition}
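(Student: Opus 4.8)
The plan is to obtain $B(\delta)$ by reading off the relevant blocks of $\delta - I$ already computed in Lemma \ref{lem:Aij}, since almost all of the work is packaged there. Recall that by definition $B(\delta) = B_2(\delta) = \coker\bigl(\delta - I : \gr_1^F L \to \gr_2^F L\bigr)$, so the only things I need to pin down are the source $\gr_1^F L$, the target $\gr_2^F L$, and the induced map, all expressed through the decomposition $L = V_1 \oplus \cdots \oplus V_6$.

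First I would match the graded pieces of $F_\bullet$ with the summands $V_i$. Since $F_q L = (\wedge^q Y)\wedge(\wedge^{3-q}H)$ is spanned by those basis monomials in $x_1,\ldots,x_g,y_1,\ldots,y_g$ having at least $q$ factors among the $y$'s, the filtration is compatible with the given decomposition: $F_1 L = V_2\oplus V_3 \oplus V_4 \oplus V_5 \oplus V_6$, $F_2 L = V_4 \oplus V_5 \oplus V_6$, and $F_3 L = V_6$. Hence $\gr_1^F L \cong V_2 \oplus V_3$ (monomials with exactly one $y$-factor) and $\gr_2^F L \cong V_4 \oplus V_5$ (exactly two $y$-factors). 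Writing $\delta - I : \gr_1^F L \to \gr_2^F L$ in block form with respect to $V_2\oplus V_3$ and $V_4 \oplus V_5$, the four blocks are $A_{24}, A_{25}, A_{34}, A_{35}$, and by Lemma \ref{lem:Aij}(5) the off-diagonal blocks $A_{25}$ and $A_{34}$ vanish. (Concretely, for a $V_2$ generator the $y$-index already equals one $x$-index, so only the other $x$ can be converted to a $y$ without producing a repeated factor, landing in $V_4$; for a $V_3$ generator both surviving indices stay distinct, landing in $V_5$.) Thus the induced map is block diagonal, $\diag(A_{24},A_{35})$, and
\begin{equation*}
B(\delta) \cong \coker(A_{24}) \oplus \coker(A_{35}).
\end{equation*}

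Finally I would substitute the two cokernel computations from Lemma \ref{lem:Aij}: part (2) gives $\coker(A_{24}) \cong \coker(Q)^{g-1}$, and part (3) gives $\coker(A_{35}) \cong \prod_{i<j<k}(\Z/(q_i))^2 \times \Z/(2q_jq_k/q_i)$, which together yield the stated isomorphism. Because the substantive linear algebra—the invariant factors of the $3\times 3$ blocks of $A_{35}$ and the structure of $A_{24}$—has already been carried out in Lemma \ref{lem:Aij}, there is no genuine obstacle remaining. The only point requiring care is the bookkeeping that correctly identifies $\gr_1^F L$ and $\gr_2^F L$ with the intended $V_i$ and confirms the vanishing of the cross blocks $A_{25},A_{34}$, so that the cokernel genuinely splits as the asserted direct sum rather than acquiring any extension contributions.
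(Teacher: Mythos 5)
Your proposal is correct and takes essentially the same approach as the paper: the paper likewise writes $\delta-I:\gr_1^F L \to \gr_2^F L$ as the block matrix $\begin{pmatrix} A_{24} & A_{25}\\ A_{34} & A_{35}\end{pmatrix}$ and deduces the result from Lemma~\ref{lem:Aij}. Your extra bookkeeping---identifying $\gr_1^F L \cong V_2\oplus V_3$ and $\gr_2^F L \cong V_4\oplus V_5$ and checking that the cross blocks $A_{25},A_{34}$ vanish so the cokernel splits---only makes explicit what the paper leaves implicit.
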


\begin{proof}
In terms of the decomposition above,  $\delta-I: \gr_F^1L \to \gr_F^2L$ is given by the block matrix
\begin{equation*}
A = \begin{pmatrix}
    A_{24} & A_{25} \\
    A_{34} & A_{35}
    \end{pmatrix}.
\end{equation*}
 The proposition now follows from Lemma \ref{lem:Aij}.
\end{proof}

\begin{proposition}
\label{prop:AtoB}
We have an isomorphism
\begin{equation*} 
A(\delta) \cong B(\delta) \times \prod_{i=1}^{g-2}(\Z/(q_i))^{{g-i\choose 2}}
\end{equation*}
\end{proposition}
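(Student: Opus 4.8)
The plan is to compute $A(\delta)$ directly from the identification $A(\delta)\cong F_2L/(\delta-I)F_1L$ of Proposition~\ref{prop:gradedSurjectiveMaximalRank} and to match the answer against the description of $B(\delta)$ in Proposition~\ref{prop:BdeltaExplicit}. Writing $F_2L = V_4\oplus V_5\oplus V_6$, I would first unwind the image $(\delta-I)F_1L$ block by block. Since $\delta-I$ strictly raises the filtration and $\delta$ acts trivially on $Y$, one checks $(\delta-I)V_4 = (\delta-I)V_6 = 0$, while $(\delta-I)V_2\subseteq V_4$ is $A_{24}$, $(\delta-I)V_5\subseteq V_6$ is $A_{56}$, and $(\delta-I)V_3\subseteq V_5\oplus V_6$ has leading part $A_{35}\colon V_3\to V_5$ together with a secondary (filtration-degree-two) component $\psi\colon V_3\to V_6$. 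The relations coming from $V_2$ land only in $V_4$ and are decoupled from the others, so
\begin{equation*}
A(\delta)\;\cong\;\coker(A_{24})\;\oplus\;Q',\qquad Q' = (V_5\oplus V_6)\big/\image(\Psi),\quad \Psi=\begin{pmatrix}A_{35}&0\\ \psi & A_{56}\end{pmatrix}.
\end{equation*}

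Next I would observe that all three maps $A_{35}$, $\psi$, $A_{56}$ preserve the grading of $L$ by the support $\{i,j,k\}$ of a basis wedge, as is visible in the block decomposition in the proof of Lemma~\ref{lem:Aij}, so the computation of $Q'$ reduces to independent rank-$\le 3$ problems, one per triple $a<b<c$. On such a triple one has $\psi(x_i\wedge x_j\wedge y_k)=q_iq_j\,(y_i\wedge y_j\wedge y_k)$ and $A_{56}(x_i\wedge y_j\wedge y_k)=q_i\,(y_i\wedge y_j\wedge y_k)$, so $\image(A_{56})$ on this triple is $q_a\Z\cdot(y_a\wedge y_b\wedge y_c)$, while every value of $\psi$ is a multiple of $q_iq_j$, hence of $q_a$ (as $q_a\mid q_i$ for $a=\min\{i,j,k\}$).

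The crux is the resulting inclusion $\image(\psi)\subseteq\image(A_{56})$. Granting it, since $V_3$ is free and $A_{56}$ surjects onto its image, I can lift $\psi = A_{56}\circ s$ for a homomorphism $s\colon V_3\to V_5$; the unipotent automorphism $\Theta(u,w)=(u,w-s(u))$ of $V_3\oplus V_5$ then satisfies $\Psi\circ\Theta = \diag(A_{35},A_{56})$, whence $Q'\cong\coker(A_{35})\oplus\coker(A_{56})$. Combining with the first step gives
\begin{equation*}
A(\delta)\cong\coker(A_{24})\oplus\coker(A_{35})\oplus\coker(A_{56}),
\end{equation*}
and since $B(\delta)\cong\coker(A_{24})\oplus\coker(A_{35})$ by Proposition~\ref{prop:BdeltaExplicit} while $\coker(A_{56})\cong\prod_{i=1}^{g-2}(\Z/(q_i))^{\binom{g-i}{2}}$ by Lemma~\ref{lem:Aij}(4), the claim follows.

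I expect the main obstacle to be justifying that $Q'$ is an honest direct sum rather than merely an extension: abstractly one only obtains a short exact sequence $0\to\coker(A_{56})\to Q'\to\coker(A_{35})\to 0$, and the entire content of the splitting is the divisibility inclusion $\image(\psi)\subseteq\image(A_{56})$. I would therefore isolate this inclusion as a separate lemma and prove it triple-by-triple from $Q(x_i)=q_iy_i$ together with $q_a\mid q_iq_j$; once it is in hand, the automorphism $\Theta$ makes the splitting immediate, and no $\mathrm{Ext}$-computation is needed.
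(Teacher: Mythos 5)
Your proposal is correct, and it is worth recording that it does more than reproduce the paper's argument: it supplies a fact that the paper's own proof needs but never actually establishes. Both arguments start from $A(\delta)\cong F_2L/(\delta-I)F_1L$ (Proposition \ref{prop:gradedSurjectiveMaximalRank}) and both ultimately split off $\coker(A_{56})\cong\prod_{i=1}^{g-2}(\Z/(q_i))^{\binom{g-i}{2}}$. The paper does this by forming the exact sequence
\begin{equation*}
0\to F_3L/(\delta-I)F_2L\to A(\delta)\to B(\delta)\to 0
\end{equation*}
and splitting it with the projection $\pi\colon F_2L\to F_3L/(\delta-I)F_2L$ attached to the decomposition $F_2L=(V_4\oplus V_5)\oplus V_6$. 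For $\pi$ to descend to $A(\delta)$ one must check $(\delta-I)F_1L\subseteq\ker\pi$, i.e.\ that the $V_6$-component $\psi(b_3)+A_{56}(b_5)$ of $(\delta-I)b$ lies in $(\delta-I)F_2L=\image(A_{56})$ for every $b\in F_1L$ --- which is exactly your inclusion $\image(\psi)\subseteq\image(A_{56})$. The paper justifies this step by citing injectivity of $\delta-I\colon\gr_1^FL\to\gr_2^FL$, leaning implicitly on Lemma \ref{lem:Aij}(5), which asserts that the block you call $\psi$ (their $A_{36}$) vanishes. Your computation is the correct one: $(\delta-I)(x_i\wedge x_j\wedge y_k)$ has the nonzero $V_6$-component $q_iq_j\,y_i\wedge y_j\wedge y_k$, so that block does not vanish, and injectivity of the graded map alone cannot give the claim (Proposition \ref{prop:intersectDeltaFq} only handles elements of $(\delta-I)F_1L$ lying entirely in $F_3L$, not the $V_6$-components of arbitrary elements). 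What rescues the splitting is precisely the divisibility you isolate, $q_{\min\{i,j,k\}}\mid q_iq_j$ from the chain condition on invariant factors, proved triple by triple; your lift $\psi=A_{56}\circ s$ and the unipotent change of basis $\Theta$ then give the direct-sum decomposition, and equivalently show that the paper's $\pi$ is well defined. So while the skeleton (the blocks of Lemma \ref{lem:Aij}, the identifications of $A(\delta)$ and $B(\delta)$) is shared, your explicit handling of the off-diagonal term $\psi$ is the missing ingredient in the paper's proof, and your write-up should keep it as a standalone lemma exactly as you propose.
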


\begin{proof}  
Under the identifications $A(\delta) \cong F_2L/(\delta-I)F_1L$ and $B(\delta) \cong F_2L/((\delta-I)F_1L + F_3L)$ from Propositions \ref{prop:gradedSurjectiveMaximalRank} and \ref{prop:AdeltaBdeltaFinite}, the map $A(\delta) \to B(\delta)$ given by the projection
\begin{equation*}
 \frac{F_2L}{(\delta-I)F_1L} \xrightarrow{} \frac{F_2L}{(\delta-I)F_1L + F_3L}
\end{equation*}
is surjective. Its kernel is isomorphic to $F_3L/(F_3L\cap (\delta-I)F_1L)$. The map $\delta-I:\gr_1^FL\to \gr_2^FL$ is injective by Proposition \ref{prop:gradedSurjectiveMaximalRank}, so $F_3L\cap (\delta-I)F_1L = (\delta-I)F_2L$ by Proposition \ref{prop:intersectDeltaFq}.  Therefore, we have  an exact sequence
\begin{equation*} 
0 \to \frac{F_3L}{(\delta-I)F_2L} \to A(\delta) \to B(\delta) \to 0
\end{equation*}
We claim that this exact sequence splits. The decomposition $F_2L\cong \gr_2^FL\times F_3L$ yields a projection $\pi:F_2L\to F_3L/(\delta-I)F_2L$. Given $a\in (\delta-I)F_1$, express it as $a = a_1+a_2$ with $a_1\in \gr_2^FL$ and $a_2\in F_3$. In fact,  $a_2 \in (\delta-I)F_2$ since $\delta-I:\gr_1^FL \to \gr_2^FL$  is injective. Therefore, $a\in \ker \pi$, and hence $\pi$  induces a splitting of $F_3L/(\delta-I)F_2L \to A(\delta)$. Finally, $F_3L/(\delta-I)F_2L \cong \prod_{i=1}^{g-2}(\Z/(q_i))^{{g-i\choose 2}}$ by Lemma \ref{lem:Aij}(4) and (5).  
\end{proof}

\begin{corollary}
\label{cor:sizeAdeltaBdelta}
When $Y$ has maximal rank,
\begin{equation*}
|A(\delta)| = 2^{{g\choose 3}}\det(Q)^{{g\choose 2}}\prod_{i=1}^{g-2}q_i^{{g-i\choose 2}} \hspace{10pt} \text{and} \hspace{10pt} |B(\delta)| = 2^{{g\choose 3}}\det(Q)^{{g\choose 2}}.
\end{equation*}
\end{corollary}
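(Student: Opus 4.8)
The plan is to read both orders directly off the structural isomorphisms already in hand, so that the corollary reduces to a purely bookkeeping computation with no new ideas. First I would record the two basic facts about $Q$: since $Q$ is diagonalized by $Q(x_i) = q_iy_i$, its determinant is $\det(Q) = \prod_{i=1}^g q_i$, and correspondingly $|\coker(Q)| = \prod_{i=1}^g q_i = \det(Q)$.

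For $|B(\delta)|$ I would invoke Proposition \ref{prop:BdeltaExplicit} and take orders factor by factor. The factor $\coker(Q)^{g-1}$ contributes $\det(Q)^{g-1}$. For each triple $i<j<k$ the factor $(\Z/(q_i))^2 \times \Z/(2q_jq_k/q_i)$ has order $q_i^2 \cdot (2q_jq_k/q_i) = 2q_iq_jq_k$, so multiplying over all $g\choose 3$ triples yields $2^{g\choose 3}\prod_{i<j<k} q_iq_jq_k$.

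The one genuinely computational step is to reassemble these contributions into $\det(Q)^{g\choose 2}$. Here I would count, for each fixed index $\ell$, the number of triples in which it occurs, namely $g-1 \choose 2$; hence $\prod_{i<j<k} q_iq_jq_k = \prod_{\ell=1}^g q_\ell^{{g-1 \choose 2}}$. Combining with the $\det(Q)^{g-1}$ factor gives $\prod_{\ell} q_\ell^{(g-1)+{g-1\choose 2}}$, and the elementary exponent identity $(g-1)+{g-1\choose 2} = {g\choose 2}$ collapses this to $\det(Q)^{g\choose 2}$, proving $|B(\delta)| = 2^{g\choose 3}\det(Q)^{g\choose 2}$.

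Finally, the formula for $|A(\delta)|$ follows at once by taking orders in the splitting of Proposition \ref{prop:AtoB}, which multiplies $|B(\delta)|$ by $\prod_{i=1}^{g-2} q_i^{{g-i\choose 2}}$. I expect no serious obstacle here: the only points requiring care are the combinatorial count of triples containing a fixed index and the exponent identity $(g-1)+{g-1\choose 2}={g\choose 2}$; everything else is a transcription of the two preceding propositions.
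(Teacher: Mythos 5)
Your proposal is correct and follows essentially the same route as the paper's proof: the paper likewise observes that each invariant factor occurs in exactly $\binom{g-1}{2}$ of the triples, so that $\prod_{i<j<k}q_iq_jq_k = \det(Q)^{\binom{g-1}{2}}$, and then reads off both orders from Propositions \ref{prop:BdeltaExplicit} and \ref{prop:AtoB}. Your write-up simply makes explicit the factor-by-factor bookkeeping and the identity $(g-1)+\binom{g-1}{2}=\binom{g}{2}$ that the paper leaves implicit.
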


\begin{proof}
Observe that $\prod_{i<j<k}q_iq_jq_k = \det(Q)^{g-1\choose 2}$ because each invariant factor occurs exactly ${g-1\choose 2}$ times. The formulas for $|B(\delta)|$ and $|A(\delta)|$ now follow from Propositions \ref{prop:BdeltaExplicit} and \ref{prop:AtoB}, respectively.
\end{proof}

\subsection{The symplectic case}
\label{subsec:symplecticCase}
Finally, we consider the case where $H$ is equipped with a symplectic form $\omega\in\wedge^2H$, and $\delta$ is an element of $\Sp(H)$ such that $(\delta-I)^2=0$. We embed $H$ into $L$ via $h\mapsto \omega \wedge h$. Because $\delta$ preserves the form $\omega$, it acts on $L/H$.  We define 
\begin{equation*} 
F_q(L/H) = (F_qL + H)/H. 
\end{equation*}
Recall that $Y$ is the saturation of $\image(\delta-I)$, which is isotropic since $\delta$ is symplectic, and $X$ a subgroup such that $H=X\oplus Y$.
Because $H\subset F_1L$, $F_2L + H = F_2L \oplus X$, and $F_3L\cap H  = 0$, each $F_q(\LH)$ is saturated in $\LH$. In particular, the graded pieces $\gr_q^F(\LH)$ are free. By Lemma \ref{lem:deltaISimpleWedge}, $(\delta-I)$ takes $F_q(\LH)$ to $F_{q+1}(\LH)$, hence induces a map $\gr_{q}^F(\LH) \to \gr_{q+1}^F(\LH)$.  We denote by $\overline{A(\delta)}$ and $\overline{B(\delta)}$ the groups 
\begin{align*}
    \overline{A(\delta)} &= \image( H^1(\langle \delta \rangle, F_{2}(\LH)) \to H^1(\langle \delta \rangle, \LH)), \\
    \overline{B(\delta)} &= \coker(\delta-I:\gr_{1}^F(\LH) \to \gr_{2}^F(\LH)).
\end{align*}

As we will show in the next section, the Ceresa class $\nu(f)$ of a positive multitwist $f$ on a closed surface, with symplectic representation $\delta_f$, lives in $\overline{A(\delta_f)}$, provided $Y$ has maximal rank. In this subsection, we will show that $\overline{A(\delta)}$ is finite, from which we conclude that the Ceresa class is torsion. When $Y$ has maximal rank, $\overline{A(\delta)}$ naturally surjects onto $\overline{B(\delta)}$. It is much easier to compute the image of $\nu(f)$ in  $\overline{B(\delta_{f})}$, see Equation \eqref{eq:nuBdelta}. We use this in \S\ref{sec:examples} to determine non-triviality of  $\nu(f)$ in some examples.

The following three propositions, and their proofs, are similar to Propositions \ref{prop:AdeltaBdeltaFinite},  \ref{prop:gradedSurjectiveMaximalRank}, and  \ref{prop:AtoB}.

\begin{proposition}
\label{prop:AbarBbar}
    We have isomorphisms
\begin{equation*}
\overline{A(\delta)} \cong \frac{F_2L + H}{((\delta-I)L + H) \cap (F_2L + H)}, \hspace{10pt} \overline{B(\delta)} \cong \frac{F_2L+H}{(\delta-I)(F_{1}L) + F_{3}L + H}.
\end{equation*}
In particular, $\overline{A(\delta)}$ and $\overline{B(\delta)}$ are finite. 
\end{proposition}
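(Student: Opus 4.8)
The plan is to adapt the proof of Proposition~\ref{prop:AdeltaBdeltaFinite} to the quotient module $\LH$, the only genuinely new ingredient being the bookkeeping needed to rewrite the filtration quotients of $\LH$ in terms of subgroups of $L$. I would start from the standard identification $H^1(\langle \delta \rangle, M) \xrightarrow{\sim} M/(\delta-I)M$, $[\zeta]\mapsto \zeta(\delta)$, valid for any finitely generated $\langle\delta\rangle\cong\Z$-module $M$, exactly as used for $A_q(\delta)$. Applying it to $M = \LH$ and to $M = F_2(\LH)$, and observing that the defining map of $\overline{A(\delta)}$ is the one induced on $H^1$ by the inclusion of coefficient modules $F_2(\LH)\hookrightarrow \LH$, gives
\begin{equation*}
\overline{A(\delta)} \cong \frac{F_2(\LH)}{F_2(\LH)\cap (\delta-I)(\LH)}.
\end{equation*}
The point here is that, under the coinvariants identification, an inclusion of coefficients $N\hookrightarrow M$ induces $N/(\delta-I)N \to M/(\delta-I)M$, whose image is $N/(N\cap(\delta-I)M)$.

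The second step I would carry out is purely formal rewriting. Since $\delta$ preserves $\omega$ it preserves $H$, so $\delta$ acts on $\LH$ and $(\delta-I)(\LH) = ((\delta-I)L + H)/H$, while $F_2(\LH) = (F_2L+H)/H$ by definition. Substituting these into the display above, and computing the intersection of two subgroups of $\LH$ (each containing the zero coset) on representatives, yields the first claimed isomorphism. For $\overline{B(\delta)}$ I would unwind the cokernel directly, using $\gr_q^F(\LH) = (F_qL+H)/(F_{q+1}L+H)$: the image $(\delta-I)\gr_1^F(\LH)$ lifts to $(\delta-I)(F_1L+H) + F_3L + H$ modulo $F_3L+H$, and since $H\subseteq F_1L$ we have $(\delta-I)H \subseteq (\delta-I)F_1L$, so that term is absorbed and the cokernel becomes $(F_2L+H)/((\delta-I)F_1L + F_3L + H)$, the second claimed isomorphism.

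Finiteness should then be immediate from Lemma~\ref{lem:GradedSurjective}. As in the proof of Proposition~\ref{prop:AdeltaBdeltaFinite}, rational surjectivity of $(\delta-I)_\Q$ on $\gr_q^F\wedge^3H_\Q$ for $q=2,3$ gives $F_2L_\Q \subseteq (\delta-I)F_1L_\Q$. Tensoring the two quotient descriptions with $\Q$, this inclusion shows in each case that the numerator $(F_2L+H)_\Q$ lands in the denominator (for $\overline{A(\delta)}$ via $F_2L_\Q\subseteq (\delta-I)L_\Q$, and for $\overline{B(\delta)}$ via $F_2L_\Q\subseteq(\delta-I)F_1L_\Q$, with $H_\Q$ already sitting in both denominators), so both quotients are finitely generated and torsion, hence finite. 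The step I expect to require the most care — and would check most closely — is the translation in the middle paragraph: one must confirm that moving between subquotients of $\LH$ and subgroups of $L$ respects the filtration and introduces no spurious torsion, which is precisely what the facts recorded before the statement ($H\subseteq F_1L$, $F_2L+H = F_2L\oplus X$, and $F_3L\cap H = 0$) are there to guarantee, ensuring in particular that each $\gr_q^F(\LH)$ is free.
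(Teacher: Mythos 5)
Your proposal is correct and is essentially the paper's own argument: the paper gives no separate proof of Proposition~\ref{prop:AbarBbar}, stating only that it and its proof are ``similar to'' Proposition~\ref{prop:AdeltaBdeltaFinite}, and your write-up is exactly that adaptation (the coinvariants identification $H^1(\langle\delta\rangle,M)\cong M/(\delta-I)M$ and its naturality, the rewriting of subquotients of $L/H$ as quotients of subgroups of $L$ using that all relevant subgroups contain $H$, and finiteness from the rational surjectivity of Lemma~\ref{lem:GradedSurjective}, which gives $F_2L_\Q\subseteq(\delta-I)F_1L_\Q$). One cosmetic remark: to absorb $(\delta-I)H$ into the denominator for $\overline{B(\delta)}$ it is slightly cleaner to use $(\delta-I)H\subseteq H$ (since $\delta$ preserves $H$), which is valid for any isotropic $Y$ and avoids leaning on the recorded inclusion $H\subseteq F_1L$.
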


\begin{proposition}
\label{prop:gradedSurjectiveModH}
The map $(\delta-I)_{\Q}:\gr_{q-1}^F(\LH)_{\Q} \to \gr_{q}^F(\LH)_{\Q}$ is surjective when $q=2,3$. When $Y$ has maximal rank, this map is injective for $q=1,2$ and
\begin{equation*}
\overline{A(\delta)} \cong \frac{F_2L + H}{(\delta-I)F_1L + H}.
\end{equation*}
\end{proposition}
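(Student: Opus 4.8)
The plan is to deduce the statement from the corresponding facts about $L=\wedge^3H$ established in Proposition~\ref{prop:gradedSurjectiveMaximalRank}, by exploiting the filtered short exact sequence $0\to H\to L\to \LH\to 0$. Setting $F_qH:=H\cap F_qL$, the relations recorded above ($H\subset F_1L$, $F_2L+H=F_2L\oplus X$, $F_3L\cap H=0$) show that each $F_qH$ is saturated, so passing to rational graded pieces yields short exact sequences
\begin{equation*}
0\to \gr_q^F H_{\Q}\to \gr_q^F L_{\Q}\to \gr_q^F(\LH)_{\Q}\to 0
\end{equation*}
for every $q$, and $\delta-I$ acts as a morphism from the degree-$q$ sequence to the degree-$(q+1)$ one. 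A direct inspection of $F_\bullet H$ gives $\dim \gr_q^F H_{\Q}=0,g,g,0$ for $q=0,1,2,3$, with $\gr_1^FH_{\Q}\cong X$, $\gr_2^FH_{\Q}\cong Y$, and the induced map $\delta-I\colon \gr_1^FH_{\Q}\to\gr_2^FH_{\Q}$ identified with $Q$, which is an isomorphism in the maximal-rank case.

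Surjectivity for $q=2,3$ then follows with no rank hypothesis: the map $\delta-I\colon \gr_{q-1}^FL_{\Q}\to\gr_q^FL_{\Q}$ is surjective by Lemma~\ref{lem:GradedSurjective}, and $\gr_q^FL_{\Q}\twoheadrightarrow\gr_q^F(\LH)_{\Q}$, so the composite, which factors through $\gr_{q-1}^F(\LH)_{\Q}$, is surjective. For injectivity at $q=2$ I would invoke a dimension count, exactly as in Proposition~\ref{prop:gradedSurjectiveMaximalRank}: both $\gr_1^F(\LH)_{\Q}$ and $\gr_2^F(\LH)_{\Q}$ have dimension $g\binom{g}{2}-g$, so the surjection between them is an isomorphism.

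The one case that does not follow from counting dimensions, and the main obstacle, is injectivity of $\delta-I\colon \gr_0^F(\LH)_{\Q}\to\gr_1^F(\LH)_{\Q}$. Here $\gr_0^F(\LH)_{\Q}=\gr_0^FL_{\Q}\cong\wedge^3X_{\Q}$, and chasing the snake lemma through the two graded sequences identifies the kernel with the intersection, inside $\gr_1^FL_{\Q}\cong Y\otimes\wedge^2X_{\Q}$, of $\image(\delta-I|_{\gr_0^FL})$ with the image $\omega\wedge X$ of $\gr_1^FH_{\Q}$. By Lemma~\ref{lem:deltaISimpleWedge} the former is the image of the Koszul-type map $\partial_Q\colon \wedge^3X\to Y\otimes\wedge^2X$ built from $Q$, while (since $H\subset F_1L$ forces the $\wedge^2X$-component of $\omega$ to vanish, so $X$ is Lagrangian and $\omega\in X\wedge Y$) the latter is the image of $x\mapsto [\omega\wedge x]$. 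To see these meet only in $0$, I would construct a single contraction $c\colon Y\otimes\wedge^2X\to X$ from the symplectic pairing between $X$ and $Y$ and verify two facts: the symmetry of $Q$—forced by $\delta\in\Sp(H)$ together with $(\delta-I)^2=0$—gives $c\circ\partial_Q=0$, whereas nondegeneracy of $\omega$ makes $c$ act on $\omega\wedge X$ as $(g-1)\,\id$, nonzero for $g\geq 2$. An element of the intersection is then annihilated by $c$ from the $\partial_Q$ side and equals $(g-1)x$ from the other, forcing $x=0$.

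Finally, the displayed isomorphism for $\overline{A(\delta)}$ is formal. The $\LH$-analogue of Proposition~\ref{prop:intersectDeltaFq}, applied with the injectivity of $\delta-I\colon \gr_0^F(\LH)_{\Q}\to\gr_1^F(\LH)_{\Q}$ just established, yields $((\delta-I)L+H)\cap(F_2L+H)=(\delta-I)F_1L+H$; combining this with the description of $\overline{A(\delta)}$ from Proposition~\ref{prop:AbarBbar} gives $\overline{A(\delta)}\cong (F_2L+H)/((\delta-I)F_1L+H)$.
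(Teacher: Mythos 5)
Your proof is correct, and it is worth noting that the paper itself writes no proof of this proposition: it only declares the argument ``similar to'' Propositions \ref{prop:AdeltaBdeltaFinite}, \ref{prop:gradedSurjectiveMaximalRank}, and \ref{prop:AtoB}, i.e.\ to redoing the block-matrix analysis of Lemma \ref{lem:Aij} inside $\LH$. Your route genuinely differs at the one place where something new happens. Surjectivity for $q=2,3$ and injectivity for $q=2$ are, as you say, formal consequences of the $L$-case (the graded short exact sequences $0\to\gr_q^FH_\Q\to\gr_q^FL_\Q\to\gr_q^F(\LH)_\Q\to 0$ plus the dimension count $g\binom{g}{2}-g$ on both sides), and the final isomorphism is the same formal combination of Proposition \ref{prop:AbarBbar} with the $\LH$-analogue of Proposition \ref{prop:intersectDeltaFq} that the paper uses for $L$. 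The new content is injectivity at $q=1$: since $\gr_0^F(\LH)=\gr_0^FL$ but $\gr_1^F(\LH)$ is a proper quotient of $\gr_1^FL$, one must rule out that $(\delta-I)\gr_0^FL_\Q$ meets the image of $\gr_1^FH_\Q\cong X_\Q$, and this does not follow from nonsingularity of $A_{13}$ alone (your snake-lemma identification of the kernel with this intersection does use $\ker(\delta-I|_{\gr_0^FL_\Q})=0$, which is part of Proposition \ref{prop:gradedSurjectiveMaximalRank}, so say so). Your contraction argument is a clean, basis-free way to finish: I verified both identities, $c\circ\partial_Q=0$ from the symmetry of $Q$, and $c=\pm(g-1)\operatorname{id}$ on the image of $H$. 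The ``similar-proof'' route the paper gestures at would instead choose, rationally, bases of $X_\Q$ and $Y_\Q$ that are simultaneously symplectically dual and diagonalize $Q$; then the image of $H$ in $\gr_1^FL$ lies in $V_2$ while $(\delta-I)\gr_0^FL$ lies in $V_3$, and transversality is immediate. Your version avoids any diagonalization; the paper's recycles the explicit decomposition it already computed.

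One blemish, fixable and immaterial to the outcome: the parenthetical claim that $H\subset F_1L$ forces the $\wedge^2X$-component of $\omega$ to vanish, ``so $X$ is Lagrangian and $\omega\in X\wedge Y$,'' is not correct as an inference. When $Y$ is Lagrangian, the $\wedge^2X$-component of $\omega$ vanishes for \emph{any} complement $X$, but $X$ need not be isotropic; non-isotropy of $X$ shows up instead in a $\wedge^2Y$-component of $\omega$. For example, with $g=2$, $Y=\langle b_1,b_2\rangle$ and $X=\langle a_1+b_2,\,a_2\rangle$ one has $\omega=(a_1+b_2)\wedge b_1+a_2\wedge b_2+b_1\wedge b_2$, and $X$ is not Lagrangian. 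This does not damage your argument, because $\wedge^2Y\wedge H\subset F_2L$, so only the $X\wedge Y$-component of $\omega$ survives in $\gr_1^FL$, and that component equals $\sum_i\alpha_i\wedge\beta_i$ for dual bases of $X$ and $Y$ --- exactly what your computation of $c$ on $\omega\wedge X$ uses. Alternatively, since the filtration depends only on $Y$, you may simply take $X$ to be a rational Lagrangian complement from the outset.
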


\begin{proposition}
\label{prop:AbarToBbar}
If $Y$ has maximal rank, then
\begin{equation*}
\overline{A(\delta)} \cong \overline{B(\delta)} \times \prod_{i=1}^{g-2}(\Z/(q_i))^{{g-i\choose 2}}.
\end{equation*}
\end{proposition}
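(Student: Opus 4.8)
The plan is to run the argument of Proposition~\ref{prop:AtoB} essentially verbatim, but inside $L/H$ and keeping careful track of the extra copy of $H$. First I would use the identifications supplied by Propositions~\ref{prop:gradedSurjectiveModH} and \ref{prop:AbarBbar} in the maximal-rank case, namely
\[
\overline{A(\delta)} \cong \frac{F_2L+H}{(\delta-I)F_1L+H}, \qquad \overline{B(\delta)} \cong \frac{F_2L+H}{(\delta-I)F_1L+F_3L+H},
\]
so that $\overline{A(\delta)}\to\overline{B(\delta)}$ is the evident quotient map. By the second isomorphism theorem its kernel is $F_3L/(F_3L\cap((\delta-I)F_1L+H))$, so the whole proof reduces to two things: computing this intersection, and splitting the resulting short exact sequence.

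The crux, and what I expect to be the main obstacle, is the identity $F_3L\cap((\delta-I)F_1L+H)=(\delta-I)F_2L$; this is where the symplectic case genuinely differs from Proposition~\ref{prop:AtoB}, since the $H$-term must be absorbed. The inclusion $\supseteq$ is immediate from Lemma~\ref{lem:deltaISimpleWedge}. For $\subseteq$, given $w=(\delta-I)u+h\in F_3L$ with $u\in F_1L$ and $h\in H$, I would pass to $L/H$, where $\bar w=(\delta-I)\bar u$ lies in $(\delta-I)F_1(L/H)\cap F_3(L/H)$; the $L/H$-analogue of Proposition~\ref{prop:intersectDeltaFq} (whose hypothesis, injectivity of $\delta-I$ on $\gr_1^F(L/H)$, is exactly what Proposition~\ref{prop:gradedSurjectiveModH} provides in the maximal-rank case) then gives $\bar w=(\delta-I)\bar v$ for some $\bar v\in F_2(L/H)$. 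Lifting $\bar v$ to $v\in F_2L$, the element $w-(\delta-I)v$ lies in $H$, while it also lies in $F_3L$ by Lemma~\ref{lem:deltaISimpleWedge}; since $F_3L\cap H=0$ in the symplectic setup, we conclude $w=(\delta-I)v\in(\delta-I)F_2L$. With the identity established, Lemma~\ref{lem:Aij}(4) identifies the kernel $F_3L/(\delta-I)F_2L$ with $\prod_{i=1}^{g-2}(\Z/(q_i))^{\binom{g-i}{2}}$, exactly as at the close of Proposition~\ref{prop:AtoB}.

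It remains to split the short exact sequence
\begin{equation*}
0 \to F_3L/(\delta-I)F_2L \to \overline{A(\delta)} \to \overline{B(\delta)} \to 0,
\end{equation*}
and here I would mimic the retraction of Proposition~\ref{prop:AtoB}. Using the decomposition $F_2L+H=F_2L\oplus X=F_3L\oplus \gr_2^FL\oplus X$ from the symplectic setup, together with the basis-adapted splitting $F_2L=F_3L\oplus(V_4\oplus V_5)$ of the maximal-rank subsection, let $\pi\colon F_2L+H\to F_3L/(\delta-I)F_2L$ be projection onto the $F_3L$-summand followed by the quotient. By construction $\pi$ is the identity on the subgroup $F_3L/(\delta-I)F_2L$, so it suffices to check that $\pi$ annihilates $(\delta-I)F_1L+H$. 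It kills $H$ because $\omega\wedge X$ sits in the $X$-summand, while $\omega\wedge Y\subseteq X\wedge\wedge^2Y\subseteq V_4$ (as $Y$ is isotropic, $\omega$ has no $\wedge^2Y$-component) has trivial $F_3L$-component; and it kills $(\delta-I)F_1L$ by the same computation as in Proposition~\ref{prop:AtoB}, namely that injectivity of $\delta-I$ on $\gr_1^F$ forces the $F_3L$-component of $(\delta-I)u$ to lie already in $(\delta-I)F_2L$. The induced retraction splits the sequence and yields $\overline{A(\delta)}\cong\overline{B(\delta)}\times\prod_{i=1}^{g-2}(\Z/(q_i))^{\binom{g-i}{2}}$, as claimed.
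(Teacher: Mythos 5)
Your strategy is the intended one: the paper gives no separate proof of Proposition~\ref{prop:AbarToBbar}, saying only that it is ``similar to'' that of Proposition~\ref{prop:AtoB}, and the first half of your argument carries out the genuinely new part correctly. In particular, the identity $F_3L\cap\bigl((\delta-I)F_1L+H\bigr)=(\delta-I)F_2L$, proved by passing to $L/H$, applying the $L/H$-analogue of Proposition~\ref{prop:intersectDeltaFq} (whose injectivity hypothesis is supplied by Proposition~\ref{prop:gradedSurjectiveModH}), and then using $F_3L\cap H=0$, is exactly the right way to absorb the extra copy of $H$, and your identification of the kernel with $\prod_{i=1}^{g-2}(\Z/(q_i))^{\binom{g-i}{2}}$ via Lemma~\ref{lem:Aij} is correct.

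The gap is in the splitting, at the step where you claim $\pi$ kills $H$. You assert $\omega\wedge Y\subseteq X\wedge\wedge^2Y$ because ``$Y$ is isotropic, so $\omega$ has no $\wedge^2Y$-component''; this is backwards. Since $Y$ is Lagrangian one gets $\omega\in H\wedge Y$, i.e.\ $\omega$ has no $\wedge^2X$-component; its $\wedge^2Y$-component depends on the choice of complement $X$ and vanishes if (and in fact only if) $X$ is itself Lagrangian, which the paper's setup never assumes --- $X$ is an arbitrary complement of $Y$. Concretely, take $g=3$, $\delta\alpha_i=\alpha_i+2\beta_i$, $\delta\beta_i=\beta_i$, $Y=\Span\{\beta_1,\beta_2,\beta_3\}$, and $X=\Span\{\alpha_1+\beta_2,\,\alpha_2,\,\alpha_3\}$, with Smith-adapted bases $x_1=\alpha_1+\beta_2$, $x_2=\alpha_2$, $x_3=\alpha_3$, $y_i=\beta_i$, so $q_1=q_2=q_3=2$. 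Then $\omega=x_1\wedge y_1+x_2\wedge y_2+x_3\wedge y_3+y_1\wedge y_2$, so $\omega\wedge y_3$ has $F_3L$-component $y_1\wedge y_2\wedge y_3$, which does not lie in $(\delta-I)F_2L=2\,(\wedge^3Y)$; hence your $\pi$ does not annihilate $H$ and does not descend to $\overline{A(\delta)}$, even though the target $F_3L/(\delta-I)F_2L\cong\Z/2$ is nontrivial. The repair is short: since $Y$ is a saturated Lagrangian sublattice, you may choose $X$ Lagrangian from the outset (extend a basis of $Y$ to a symplectic basis of $H$ and let $X$ be spanned by the complementary vectors); this changes neither $\overline{A(\delta)}$, $\overline{B(\delta)}$, nor the invariant factors $q_i$, it forces $\omega\in X\wedge Y$, and with that choice your retraction argument --- and hence the whole proof --- goes through.
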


The next two propositions compare $A(\delta)$ and $B(\delta)$ from the previous subsection to their counterparts $\overline{A(\delta)}$ and $\overline{B(\delta)}$. 

\begin{proposition}
\label{prop:BtoBbar}
If $Y$ has maximal rank, then we have an exact sequence
\begin{equation*}
0 \to \coker(Q) \to B(\delta) \to \overline{B(\delta)} \to 0.
\end{equation*}
\end{proposition}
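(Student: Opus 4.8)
The plan is to realize the map $B(\delta)\to\overline{B(\delta)}$ as the right-hand cokernel map in a morphism of short exact sequences, and then read off the claim from the snake lemma, the only nonformal input being the injectivity statement of Proposition \ref{prop:gradedSurjectiveModH}, which I will use to kill the connecting homomorphism.

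First I would record two short exact sequences comparing the graded pieces of $L$ and of $\LH$. For each $q$ the natural surjection $\gr_q^F L \twoheadrightarrow \gr_q^F(\LH)=(F_qL+H)/(F_{q+1}L+H)$ has kernel equal to the image $\bar H_q$ of $H$ (embedded by $h\mapsto\omega\wedge h$) in $\gr_q^F L$: if $\xi\in F_qL$ also lies in $F_{q+1}L+H$, then writing $\xi=\eta+\omega\wedge h$ with $\eta\in F_{q+1}L$ forces $\omega\wedge h=\xi-\eta\in F_qL$, so $[\xi]=[\omega\wedge h]$. A direct computation with $\omega\wedge h$ (writing $h=x+y$ with $x\in X$, $y\in Y$, and using $\omega\wedge y\in F_2L$ together with $\omega\wedge x\in F_1L$ having nonzero class in $\gr_1^FL$) identifies $\bar H_1\cong X$ and $\bar H_2\cong Y$, both via $h\mapsto[\omega\wedge h]$; injectivity here uses $g\ge 2$. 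This yields exact rows
\begin{equation*}
0\to X\to \gr_1^F L\to \gr_1^F(\LH)\to 0,\qquad 0\to Y\to \gr_2^F L\to \gr_2^F(\LH)\to 0.
\end{equation*}

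Next I would check that $\delta-I$ defines a morphism from the first row to the second. The square involving $\gr^FL$ and $\gr^F(\LH)$ commutes because the quotient $L\to\LH$ is $\delta$-equivariant. The key identity for the left-hand square is $(\delta-I)(\omega\wedge h)=\omega\wedge(\delta-I)h$, valid since $\delta$ is symplectic and hence fixes $\omega$; taking $h=x\in X$ gives $(\delta-I)(\omega\wedge x)=\omega\wedge Qx$, so under the identifications $\bar H_1\cong X$ and $\bar H_2\cong Y$ the induced vertical map is exactly $Q$. By definition and by Propositions \ref{prop:AdeltaBdeltaFinite} and \ref{prop:AbarBbar}, the cokernels of the three vertical maps $\delta-I$ are $\coker(Q)$, $B(\delta)$, and $\overline{B(\delta)}$ respectively, so the snake lemma produces
\begin{equation*}
\ker\!\big(\delta-I\mid_{\gr_1^F(\LH)}\big)\xrightarrow{\ \partial\ }\coker(Q)\to B(\delta)\to\overline{B(\delta)}\to 0.
\end{equation*}

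Finally, by Proposition \ref{prop:gradedSurjectiveModH} the map $\delta-I\colon\gr_1^F(\LH)\to\gr_2^F(\LH)$ is rationally injective in the maximal rank case; since $\gr_1^F(\LH)$ is torsion-free, this forces $\ker(\delta-I\mid_{\gr_1^F(\LH)})=0$. Hence $\partial=0$, the map $\coker(Q)\to B(\delta)$ is injective, and we obtain $0\to\coker(Q)\to B(\delta)\to\overline{B(\delta)}\to 0$, as claimed. I expect the main obstacle to be the bookkeeping of the first step: proving that the kernel of $\gr_q^FL\to\gr_q^F(\LH)$ is precisely $\bar H_q$ and that $\bar H_1\cong X$, $\bar H_2\cong Y$ with connecting map $Q$. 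Once these identifications, and the vanishing $\ker(\delta-I\mid_{\gr_1^F(\LH)})=0$ from Proposition \ref{prop:gradedSurjectiveModH}, are in hand, everything else is formal.
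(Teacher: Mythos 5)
Your proof is correct and is essentially the paper's own argument: the paper likewise applies the snake lemma to a morphism between the two short exact sequences of graded pieces, with the left vertical map inducing $Q$ and with Proposition \ref{prop:gradedSurjectiveModH} (rational injectivity plus torsion-freeness of $\gr_1^F(\LH)$) used to make $\delta-I$ injective on $\gr_1^F(\LH)$ and thus kill the connecting homomorphism. Your bookkeeping is in fact slightly more precise than the paper's, which writes $H$ in the top-left corner of its diagram even though the kernel of $\gr_1^F L \to \gr_1^F(\LH)$ is the image of $X \cong H/Y$, exactly as you identify it.
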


\begin{proof}
Consider the following commutative diagram
\begin{equation*}
\begin{tikzcd}
& 0 \arrow[r] & H \arrow[r, "\wedge \omega"] \arrow[d, "\delta-I"]& F_1L \arrow[r] \arrow[d, "\delta-I"] & F_1(\LH) \arrow[r] \arrow[d, "\delta-I"] &0 \\
& 0 \arrow[r] & Y \arrow[r, "\wedge \omega"] & F_2L \arrow[r]  & F_2(\LH) \arrow[r]  &0 
\end{tikzcd}
\end{equation*}
whose rows are exact. The map $\delta-I:F_1(\LH) \to F_2(\LH)$ is injective because it becomes an isomorphism after tensoring with $\Q$ by Proposition \ref{prop:gradedSurjectiveModH}.  We now get the desired exact sequence by the snake lemma.
\end{proof}

\begin{proposition}
\label{prop:AtoAbar}
If $Y$ has maximal rank, then we have an exact sequence
\begin{equation*}
0 \to \coker(Q) \to A(\delta) \to \overline{A(\delta)} \to 0.
\end{equation*}
\end{proposition}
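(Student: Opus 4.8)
The plan is to realize $\overline{A(\delta)}$ as an explicit quotient of $A(\delta)$ and to identify the kernel with $\coker(Q)$, importing the injectivity we already have at the graded level in Proposition~\ref{prop:BtoBbar}. Since $Y$ has maximal rank, Propositions~\ref{prop:gradedSurjectiveMaximalRank} and~\ref{prop:gradedSurjectiveModH} give
\[
A(\delta)\cong\frac{F_2L}{(\delta-I)F_1L},\qquad \overline{A(\delta)}\cong\frac{F_2L+H}{(\delta-I)F_1L+H},
\]
so the inclusion $F_2L\hookrightarrow F_2L+H$ induces a map $\phi\colon A(\delta)\to\overline{A(\delta)}$. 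I would first note that $\phi$ is surjective: every class in $\overline{A(\delta)}$ has a representative in $F_2L$, because any contribution from $H$ already lies in the denominator $(\delta-I)F_1L+H$.

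Next I would compute $\ker\phi$. A class $[z]$ with $z\in F_2L$ lies in $\ker\phi$ exactly when $z\in(\delta-I)F_1L+H$; writing $z=u+h$ with $u\in(\delta-I)F_1L\subseteq F_2L$ forces $h\in H\cap F_2L$. By the facts recorded for the symplectic case, namely $\omega\wedge Y\subseteq F_2L$ and $(\omega\wedge X)\cap F_2L=0$, one has $H\cap F_2L=\omega\wedge Y$, so $\ker\phi$ is the image of $\omega\wedge Y$ in $A(\delta)$. The homomorphism $Y\to A(\delta)$, $y\mapsto[\omega\wedge y]$, kills $\image(Q)$, since $\omega\wedge Qx=\omega\wedge(\delta-I)x=(\delta-I)(\omega\wedge x)\in(\delta-I)F_1L$ (using $\delta\omega=\omega$). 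Hence it descends to a surjection $\bar\psi\colon\coker(Q)\twoheadrightarrow\ker\phi$.

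The main obstacle is the injectivity of $\bar\psi$: one must rule out that some $\omega\wedge y$ with $y\notin\image(Q)$ becomes a boundary in $(\delta-I)F_1L$. I would deduce this from Proposition~\ref{prop:BtoBbar}. The projection $A(\delta)\to B(\delta)$ of Proposition~\ref{prop:AtoB} carries $[\omega\wedge y]$ to the class of $\omega\wedge y$ in $B(\delta)$, so the composite $\coker(Q)\xrightarrow{\bar\psi}A(\delta)\to B(\delta)$ is precisely the injection $\coker(Q)\hookrightarrow B(\delta)$ of Proposition~\ref{prop:BtoBbar} (which sends $y$ to the class of $\omega\wedge y$). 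Injectivity of that composite forces injectivity of $\bar\psi$, giving the short exact sequence
\[
0\to\coker(Q)\xrightarrow{\bar\psi}A(\delta)\xrightarrow{\phi}\overline{A(\delta)}\to 0.
\]

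To keep the argument structurally parallel to Proposition~\ref{prop:BtoBbar}, an alternative packaging is to compare the exact sequences underlying Propositions~\ref{prop:AtoB} and~\ref{prop:AbarToBbar} in a commutative ladder with terms $K=F_3L/(\delta-I)F_2L$ and $\bar K=F_3(\LH)/(\delta-I)F_2(\LH)$ on the left, $A(\delta),\overline{A(\delta)}$ in the middle, and $B(\delta),\overline{B(\delta)}$ on the right, all verticals induced by $F_\bullet L\hookrightarrow F_\bullet L+H$. The key observation is that the left vertical $K\to\bar K$ is an isomorphism: since $F_3L\cap H=0$, the natural map $F_3L\to F_3(\LH)$ is an isomorphism, and it carries $(\delta-I)F_2L$ onto $(\delta-I)F_2(\LH)$. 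The snake lemma then identifies $\ker\phi$ and $\coker\phi$ with $\ker(B(\delta)\to\overline{B(\delta)})=\coker(Q)$ and $\coker(B(\delta)\to\overline{B(\delta)})=0$, which is again Proposition~\ref{prop:BtoBbar}. Either way, the whole statement reduces to the injectivity already established at the $B$-level together with the triviality of $F_3L\cap H$.
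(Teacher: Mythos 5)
Your proof is correct, and it rests on the same key input as the paper's proof: the exact sequence of Proposition \ref{prop:BtoBbar}. The paper argues purely diagrammatically: it sets $K=\ker(A(\delta)\to\overline{A(\delta)})$, stacks the tautological sequence $0\to K\to A(\delta)\to\overline{A(\delta)}\to 0$ over the sequence of Proposition \ref{prop:BtoBbar}, with vertical maps the projections to the $B$-level furnished by Propositions \ref{prop:AtoB} and \ref{prop:AbarToBbar}, and concludes $K\cong\coker(Q)$ from the snake lemma. Your ``alternative packaging'' is precisely this $3\times 3$ diagram with the snake lemma run along the transposed axis, and your primary argument is an element-wise version of the same reduction, so the two routes are close relatives; still, the differences are worth recording. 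What you add is genuinely useful: you make the maps explicit ($\ker\phi$ is the image of $\omega\wedge Y$ in $A(\delta)$, via $H\cap F_2L=\omega\wedge Y$, and the injection $\coker(Q)\to A(\delta)$ is $y\mapsto[\omega\wedge y]$), which the paper never does; and, more importantly, you isolate the point that the paper's one-line appeal to Propositions \ref{prop:AtoB}, \ref{prop:AbarToBbar} and the snake lemma leaves implicit, namely that the \emph{natural} map $F_3L/(\delta-I)F_2L\to F_3(\LH)/(\delta-I)F_2(\LH)$ between the kernels of $A(\delta)\to B(\delta)$ and $\overline{A(\delta)}\to\overline{B(\delta)}$ is an isomorphism. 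Knowing these two finite groups are abstractly isomorphic (equal orders) is not enough to close the snake argument, and your justification via $F_3L\cap H=0$ is exactly the missing step. The one mild dependency to flag is your parenthetical description of the injection of Proposition \ref{prop:BtoBbar} as $y\mapsto[\omega\wedge y]$: this is correct, but it requires unwinding the snake-lemma construction inside that proposition's proof rather than quoting its statement, which records only the existence of the exact sequence.
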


\begin{proof}
First, observe that $A(\delta)\to \overline{A(\delta)}$ induced by
\begin{equation*}
\frac{F_2L}{(\delta-I)F_1L} \to \frac{F_2L + H}{(\delta-I)F_1L + H}
\end{equation*}
is surjective. Let $K$ denote its kernel. Then we have the following commutative diagram 
\begin{equation*}
\begin{tikzcd}
& 0 \arrow[r] & K \arrow[r, ] \arrow[d]& A(\delta) \arrow[r] \arrow[d] & \overline{A(\delta)} \arrow[r] \arrow[d] &0 \\
& 0 \arrow[r] & \coker(Q) \arrow[r] & B(\delta) \arrow[r]  & \overline{B(\delta)} \arrow[r]  &0 
\end{tikzcd}
\end{equation*}
whose rows are exact. The vertical map on the left is an isomorphism by Propositions \ref{prop:AtoB}, \ref{prop:AbarToBbar}, and the snake lemma. 
\end{proof}

\begin{corollary}
\label{cor:sizeAbarBbar}
When $Y$ has maximal rank,
\begin{equation*}
|\overline{A(\delta)}| = 2^{{g\choose 3}}\det(Q)^{{g\choose 2}-1}\prod_{i=1}^{g-2}q_i^{{g-i\choose 2}} \hspace{10pt} \text{and} \hspace{10pt} |\overline{B(\delta)}| = 2^{{g\choose 3}}\det(Q)^{{g\choose 2}-1}. 
\end{equation*}
\end{corollary}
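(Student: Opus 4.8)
The plan is to read off both cardinalities directly from the short exact sequences of Propositions~\ref{prop:BtoBbar} and \ref{prop:AtoAbar}, combined with the orders already computed in Corollary~\ref{cor:sizeAdeltaBdelta}. The one small ingredient not yet recorded explicitly is the order of $\coker(Q)$, and everything else is multiplicativity of orders in short exact sequences of finite abelian groups.

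First I would note that in the maximal rank case $Q\colon X\to Y$ is a homomorphism of free $\Z$-modules, each of rank $g$, with invariant factors $q_1,\ldots,q_g$. Hence $\coker(Q)\cong \prod_{i=1}^g \Z/(q_i)$, so that
\begin{equation*}
|\coker(Q)| = \prod_{i=1}^g q_i = \det(Q),
\end{equation*}
the last equality holding because the determinant of a square integer matrix is, up to sign, the product of its invariant factors, and $\det(Q)>0$ since $Q$ is rationally surjective.

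Next, since the order of a finite abelian group is multiplicative in short exact sequences, Proposition~\ref{prop:BtoBbar} gives $|\overline{B(\delta)}| = |B(\delta)|/|\coker(Q)|$ and Proposition~\ref{prop:AtoAbar} gives $|\overline{A(\delta)}| = |A(\delta)|/|\coker(Q)|$. Substituting the values $|B(\delta)| = 2^{{g\choose 3}}\det(Q)^{{g\choose 2}}$ and $|A(\delta)| = 2^{{g\choose 3}}\det(Q)^{{g\choose 2}}\prod_{i=1}^{g-2}q_i^{{g-i\choose 2}}$ from Corollary~\ref{cor:sizeAdeltaBdelta} and dividing by $\det(Q)$ lowers the exponent of $\det(Q)$ by exactly one in each case, leaving the factor $\prod_{i=1}^{g-2}q_i^{{g-i\choose 2}}$ untouched in the $\overline{A(\delta)}$ computation. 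This produces the two claimed formulas.

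There is essentially no obstacle here: all of the genuine work is contained in the exact sequences and the order computations that precede the statement. The only point meriting a moment's care is the identity $|\coker(Q)| = \det(Q)$, which is what guarantees that the power of $\det(Q)$ drops by precisely one rather than by some other amount; once this is in hand, the corollary follows immediately.
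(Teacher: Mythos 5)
Your proposal is correct and matches the paper's (implicit) argument exactly: the corollary is stated immediately after Propositions~\ref{prop:BtoBbar} and \ref{prop:AtoAbar} precisely because it follows from those exact sequences together with Corollary~\ref{cor:sizeAdeltaBdelta} by multiplicativity of orders, with $|\coker(Q)| = \prod_{i=1}^g q_i = \det(Q)$. Your extra care in justifying $|\coker(Q)| = \det(Q)$ is the right (and only) point needing verification, and it is handled correctly.
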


\section{The Ceresa class of a multitwist}

\label{sec:ceresaClassMultitwist}

\subsection{Dehn Twists and Multitwists}
\label{subsec:dehnTwistMultitwist}
In this subsection, we recall some basic facts about Dehn twists. We refer the reader to \cite{FarbMargalit} for a more detailed treatment.

\begin{lemma}
\label{lem:propertiesDehnTwist}
Let $f \in \Gamma_g$ (resp. $\Gamma_g^1$) and $a$ the isotopy class of a simple closed curve. We have
\begin{enumerate}
    \item  $T_{f(a)} = f T_a f^{-1}$, in particular, $[T_a,f] = T_a T_{f(a)}^{-1}$, and
    \item $f$ commutes with $T_a$ if and only if $f(a) = a$; in particular $T_aT_b = T_bT_a$ if and only if the geometric intersection number $i(a,b)=0$.
\end{enumerate}
\end{lemma}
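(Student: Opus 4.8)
The plan is to derive both parts from the change-of-coordinates principle for Dehn twists, together with two standard facts recorded in \cite{FarbMargalit}: that the assignment $a \mapsto T_a$ is injective on isotopy classes of essential simple closed curves, and that whenever $i(a,b) \geq 1$ one has $i(T_b(a),a) = i(a,b)^2$. With these in hand, the lemma becomes a matter of bookkeeping.

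First I would prove part (1). By definition $T_a$ is a mapping class supported in an annular neighborhood $A$ of $a$, performing a left-handed twist there. Conjugating by a representative of $f$ carries this support to an annular neighborhood $f(A)$ of $f(a)$ and performs a twist along $f(a)$; because $f$ is orientation-preserving, the handedness is unchanged, so $f T_a f^{-1}$ is precisely the left-handed Dehn twist $T_{f(a)}$. The commutator identity then follows by a one-line computation: with the paper's convention $[T_a,f] = T_a f T_a^{-1} f^{-1}$, we have $f T_a^{-1} f^{-1} = (f T_a f^{-1})^{-1} = T_{f(a)}^{-1}$, whence $[T_a,f] = T_a T_{f(a)}^{-1}$.

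Next I would handle part (2). The forward implication is immediate from part (1): if $f(a)=a$, then $f T_a f^{-1} = T_{f(a)} = T_a$, so $f$ and $T_a$ commute. For the converse, assuming $f T_a f^{-1} = T_a$, part (1) gives $T_{f(a)} = T_a$, and injectivity of the Dehn twist map forces $f(a) = a$. For the ``in particular'' statement I would apply the main equivalence with $f = T_b$: the twists $T_a, T_b$ commute if and only if $T_b(a) = a$, so it remains to check $T_b(a) = a \iff i(a,b) = 0$. If $i(a,b)=0$, then $a$ and $b$ may be isotoped to be disjoint and $T_b$ is supported away from $a$, so $T_b(a)=a$; conversely, if $i(a,b)\geq 1$, then $i(T_b(a),a) = i(a,b)^2 > 0 = i(a,a)$, so $T_b(a)\neq a$.

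I expect no genuine obstacle, since the statement is a recollection of standard material. The only points requiring care are the orientation-preserving hypothesis in part (1), which is exactly what keeps a left-handed twist left-handed under conjugation, and the appeal in part (2) to the two cited facts---injectivity of $a\mapsto T_a$ on essential curves and the intersection-number formula---which between them carry the real content of the converse direction.
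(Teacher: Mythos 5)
Your proof is correct, and it follows exactly the route the paper implicitly takes: the paper's entire proof is a citation to \cite[Facts~3.7, 3.8]{FarbMargalit}, and your argument is precisely the standard derivation of those two facts (conjugation of the annular support for (1); injectivity of $a \mapsto T_a$ together with $i(T_b(a),a) = i(a,b)^2$ for (2)). You have simply supplied the details that the paper outsources to the textbook, so there is nothing substantive to compare.
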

\begin{proof}
See \cite[Facts~3.7, 3.8]{FarbMargalit}.
\end{proof}
As before, set $H = H_1(\Sigma_g,\Z) = H_1(\Sigma_g^1,\Z)$. We write $[a]$ for the homology class of an isotopy class $a$ of a simple closed curve, and $\hat{i}$ for the algebraic intersection number on $H$. 
The induced map $(T_{a})_* \in \Sp(H)$  only depends on the homology class $[a]$, and
\begin{equation}
\label{eq:symplecticRepDehnTwist}
    (T_a)_* ([b]) = [b] + \hat{i}([a],[b])[a],
\end{equation}
see \cite[Proposition 6.3]{FarbMargalit}.

Let $\Lambda$ be a collection of isotopy classes of pairwise non-intersecting  essential simple closed curves in $\Sigma_g^1$ (resp. $\Sigma_g$).  
Define 
\begin{equation*}
    \pB = \langle T_{\ell} \, :\, \ell \in \Lambda \rangle < \Gamma_g^1 \; \text{ or } \; \Gamma_g, \hspace{10pt} \text{and} \hspace{10pt}  Y = \langle [\ell] \, : \, \ell\in \Lambda \rangle < H.
\end{equation*}
The group $\pB$ is free and abelian by Lemma \ref{lem:propertiesDehnTwist}(2), and $Y$ is saturated in $H$ since any $d=\rank Y$ collection of simple closed curves $\lambda_1,\ldots,\lambda_d$ whose homology classes are linearly independent extends to an integral basis of $H$. Given $f\in \pB$, we write $\delta_{f} \in \Sp(H)$ for the image of $f$ under the symplectic representation (we could also call this $f_*$,  but use $\delta_f$ to better match the notation of \S\ref{sec-algebra-wedge-H}). An element of the form 
\begin{equation*}
    f = \prod_{\ell\in \Lambda} T_{\ell}^{c_{\ell}} \in \pB
\end{equation*}
with $c_{\ell}>0$ for all $\ell$ is called a \textit{positive multitwist} supported on $\Lambda$.


\begin{proposition}
\label{prop:YSpan}
For any positive multitwist $f$ supported on $\Lambda$, we have that $Y$ is the saturation of $\image (\delta_{f}-I)$. 
\end{proposition}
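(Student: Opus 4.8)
The plan is to compute $\delta_f - I$ explicitly and then read off its image. First I would invoke the formula \eqref{eq:symplecticRepDehnTwist} for the symplectic action of a Dehn twist. Since the curves in $\Lambda$ are pairwise non-intersecting, any two of the twists $T_\ell$ commute and all the algebraic intersection numbers $\hat{i}([\ell],[\ell'])$ vanish. Iterating \eqref{eq:symplecticRepDehnTwist} shows $(T_\ell^{c_\ell})_*(h) = h + c_\ell\,\hat{i}([\ell],h)\,[\ell]$ (the higher-order terms drop because $\hat{i}([\ell],[\ell])=0$), and multiplying out the product over $\Lambda$ kills the cross terms for the same reason, giving the clean formula $(\delta_f - I)(h) = \sum_{\ell \in \Lambda} c_\ell\,\hat{i}([\ell],h)\,[\ell]$ for every $h \in H$. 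This exhibits $\image(\delta_f - I) \subseteq Y$ at once. As $Y$ is saturated in $H$, the saturation of the image already lies inside $Y$, so the content of the proposition is that this saturation is all of $Y$, which amounts to showing that $\image(\delta_f - I)$ has full rank $\rank Y$, i.e.\ that $\image(\delta_f - I)_\Q = Y_\Q$.

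For this I would argue rationally by comparing annihilators. Suppose $\lambda \in H_\Q^*$ annihilates $\image(\delta_f - I)_\Q$. By the formula above this says $\sum_{\ell} c_\ell\, \lambda([\ell])\, \hat{i}([\ell],h) = 0$ for all $h \in H_\Q$; writing the left-hand side as $\hat{i}(v,h)$ with $v = \sum_\ell c_\ell\, \lambda([\ell])\,[\ell] \in Y_\Q$ and using the nondegeneracy of $\hat{i}$, I conclude $v = 0$. Applying $\lambda$ to $v = 0$ yields $\sum_\ell c_\ell\, \lambda([\ell])^2 = 0$. This is the crux: because $f$ is a \emph{positive} multitwist, every $c_\ell > 0$, so each summand is nonnegative and the sum can vanish only if $\lambda([\ell]) = 0$ for all $\ell$ --- that is, $\lambda$ annihilates $Y_\Q$. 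Since conversely every functional killing $Y_\Q$ kills the image, the annihilator of $\image(\delta_f - I)_\Q$ coincides with the annihilator of $Y_\Q$, forcing $\image(\delta_f - I)_\Q = Y_\Q$.

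Finally, combining $\image(\delta_f - I) \subseteq Y$ with $\image(\delta_f - I)_\Q = Y_\Q$ and the saturatedness of $Y$ gives that the saturation of $\image(\delta_f - I)$ is $Y_\Q \cap H = Y$, as desired. The one genuinely load-bearing step is the rank computation in the middle paragraph, and positivity of the coefficients is exactly what makes it work: without it, homologous curves entering with canceling signs could collapse the rank of the image below $\rank Y$, so the positivity hypothesis is essential and cannot be dropped.
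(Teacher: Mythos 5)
Your proof is correct. It coincides with the paper's up through the key formula $(\delta_f-I)(h) = \sum_{\ell\in\Lambda} c_\ell\, \hat{i}([\ell],h)[\ell]$, the inclusion $\image(\delta_f-I)\leq Y$, and the reduction (via saturatedness) to the rational statement $\image(\delta_f-I)_{\Q}=Y_{\Q}$; from there the two arguments package the positivity differently. The paper passes to the quotient $W_{\Q}=H_{\Q}/(Y_{\Q})^{\omega}$ by the symplectic complement, uses isotropy of $Y_{\Q}$ to get $\dim W_{\Q}=\dim Y_{\Q}$, shows the form $(\alpha,\alpha')=\hat{i}(\alpha,(\delta_f-I)\alpha')=\sum_\ell c_\ell\,\hat{i}([\ell],\alpha)\,\hat{i}([\ell],\alpha')$ is positive definite on $W_{\Q}$, and concludes by injectivity plus a dimension count. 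You instead compare annihilators in $H_{\Q}^*$: a functional $\lambda$ killing the image produces $v=\sum_\ell c_\ell\lambda([\ell])[\ell]$ with $\hat{i}(v,-)\equiv 0$, so $v=0$ by nondegeneracy, and then $\lambda(v)=\sum_\ell c_\ell\lambda([\ell])^2=0$ forces $\lambda([\ell])=0$ for every $\ell$, i.e.\ $\lambda$ kills $Y_{\Q}$; equal annihilators give equal subspaces. These are in fact dual arguments: under the identification $H_{\Q}\cong H_{\Q}^*$ given by $\hat{i}$, a functional $\lambda=\hat{i}(u,-)$ annihilates the image precisely when $u$ lies in the radical of the paper's bilinear form, and your sum of squares is the paper's $(u,u)$. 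So the load-bearing step --- positivity of the $c_\ell$ turning a vanishing sum of squares into termwise vanishing --- is the same, but your route dispenses with the symplectic complement, the isotropy of $Y_{\Q}$, and the dimension count, requiring only nondegeneracy of $\hat{i}$ and the double-annihilator identity for finite-dimensional spaces; it is a slightly more elementary and self-contained piece of linear algebra, while the paper's formulation makes visible the positive definite quadratic form (essentially $Q_\Gamma$ of \S\ref{subsec:TropicalJacobian}) that reappears in the tropical picture.
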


\begin{proof}
We temporarily denote by $Y'$ the saturation of $\image (\delta_{f}-I)$ in $H$.  Applying  Equation \eqref{eq:symplecticRepDehnTwist} to \textit{any} $f = \prod T_{\ell}^{c_{\ell}}$ yields
\begin{equation}
\label{eq:fMinusIOnH}
(\delta_{f}-I)(h) = \sum_{\ell \in \Lambda} c_{\ell} \, \hat{i}([\ell],h)[\ell],
\end{equation}
and therefore $(\delta_{f}-I)(H) \leq Y$. Applying this to a positive multitwist $f$, we have $Y'\leq Y$. Because of this and the fact that $Y$ and $Y'$ are saturated subgroups of $H$, the equality $Y_{\Q} = Y_{\Q}'$ will imply $Y=Y'$.  
Denote by $(Y_{\Q})^{\omega} \leq H_{\mathbb{Q}}$ the symplectic complement to $Y_{\Q}$, i.e.,
\begin{equation*}
    (Y_{\Q})^{\omega} = \{h\in H_{\Q} \, | \, \hat{i}( y,h )= 0 \text{ for all } y\in Y_{\Q}\}
\end{equation*}
and set $W_{\Q} = H_{\Q}/(Y_{\Q})^{\omega}$. Because $Y_{\Q}$ is an isotropic subspace of $H_{\Q}$, we have  $\dim W_{\mathbb{Q}} = \dim Y_{\Q}$.  Since $(Y_{\Q})^{\omega}$ lies in the kernel of $(\delta_{f}-I)_{\Q}$ by Equation \eqref{eq:fMinusIOnH}, the following 
\begin{equation*}
    (\alpha,\alpha') = \hat{i}( \alpha, (\delta_{f}-I)\alpha')
\end{equation*}
defines a bilinear form on $W_{\Q}$. We claim that this is actually an inner product. Indeed, we have 
\begin{equation*}
    \hat{i}( \alpha,(\delta_{f}-I)\alpha') = \sum_{\ell\in \Lambda} c_{\ell}\, \hat{i}([\ell], \alpha) \hat{i}([\ell], \alpha'),
\end{equation*}
and hence this bilinear form is symmetric and positive semidefinite.  If $\alpha$ is a nonzero element of $W_{\Q}$, then there is some  $\ell \in \Lambda$ such that $\hat{i}(  [\ell],\alpha) \neq 0$. By the above equation, we see that $(\alpha,\alpha) > 0$, which establishes the positive definiteness. We conclude that $(\delta_{f}-I)_{\Q}: W_{\Q} \to Y_{\Q}'$ is an injective linear map of vector spaces that have the same dimension, and hence is also surjective. This proves  $ Y_{\Q} = Y'_{\Q}$, and the proposition follows. 
\end{proof} 

The following proposition will be used for ``computing'' the Ceresa class of a Lagrangian collection of curves, in the sense of \S \ref{sec:LagrangianTorsion}. 





\begin{proposition}
\label{prop:splitDelta}
Take elements $f_1,\ldots, f_M \in \pB$, let $f$ denote their product, and let $\tau$ be a hyperelliptic quasi-involution. Then 
\begin{equation*}
  J([f,\tau]) = \sum_{i=1}^M (g_i)_* \cdot J([f_i,\tau]) 
\end{equation*}
for some $g_1,\ldots,g_M \in \pB$. 
\end{proposition}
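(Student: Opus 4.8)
The plan is to exploit the fact, established in Proposition~\ref{prop:muIndependentTau}, that $\mu_\tau$ is a crossed homomorphism, and to simply unwind that identity along the product $f = f_1\cdots f_M$. First I would record that $J([\gamma,\tau])$ makes sense for every $\gamma\in\pB$: since $\tau$ acts as $-I$ on $H$, which is central in $\GL(H)$, the commutator $[\gamma,\tau]$ maps to the identity under the symplectic representation and hence lies in the Torelli group $\cI_g^1$ (resp. $\cI_g$), where $J$ is defined. Thus $\gamma\mapsto J([\gamma,\tau])$ is precisely the cocycle $\mu_\tau$ of \eqref{eq:muTauJohnson}, and by Proposition~\ref{prop:muIndependentTau} it satisfies the crossed-homomorphism identity
\begin{equation*}
\mu_\tau(xy) = \mu_\tau(x) + x_*\cdot\mu_\tau(y),
\end{equation*}
where $x_*$ denotes the action of $x$ on $L$ (resp. $L/H$) through its symplectic representation.

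Next I would apply this identity inductively, peeling off one factor of $f$ at a time. Taking $x=f_1$ and $y=f_2\cdots f_M$ gives $\mu_\tau(f) = \mu_\tau(f_1) + (f_1)_*\cdot\mu_\tau(f_2\cdots f_M)$, and iterating yields
\begin{equation*}
J([f,\tau]) = \mu_\tau(f_1\cdots f_M) = \sum_{i=1}^M (f_1\cdots f_{i-1})_*\cdot\mu_\tau(f_i) = \sum_{i=1}^M (f_1\cdots f_{i-1})_*\cdot J([f_i,\tau]).
\end{equation*}
Setting $g_i = f_1\cdots f_{i-1}$, interpreted as the identity element when $i=1$, then completes the argument: each $g_i$ lies in $\pB$ because $\pB$ is a group, so $(g_i)_* = (f_1\cdots f_{i-1})_*$ is the required symplectic automorphism.

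There is no genuine obstacle here; the entire content is the crossed-homomorphism property of $\mu_\tau$, and the proof is a routine induction. The only points I would take care to state are that $[f_i,\tau]$ lies in the Torelli group so that each summand $J([f_i,\tau])$ is defined (immediate from $p(\tau)=-I$ being central), and that the twisting elements $g_i$ are the explicit partial products $f_1\cdots f_{i-1}$ and hence belong to $\pB$. It is precisely this explicitness of the $g_i$ that will make the proposition usable for computing the Ceresa class of a Lagrangian collection of curves in \S\ref{sec:LagrangianTorsion}.
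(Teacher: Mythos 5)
Your proposal is correct and follows essentially the same route as the paper: the paper also proves this by induction on $M$, peeling off $f_1$ via the crossed-homomorphism identity of Proposition~\ref{prop:muIndependentTau} and absorbing the resulting twists into $\pB$. Your version merely makes the inductive bookkeeping explicit by identifying $g_i = f_1\cdots f_{i-1}$, which is a harmless (and arguably clearer) refinement of the same argument.
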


\begin{proof}
We proceed by induction on $M$. When $M=1$ the Lemma is clear. Suppose that the Lemma is true for $M-1$. Then by Proposition \ref{prop:muIndependentTau} we have 
\begin{align*}
    J([f,\tau]) =J([f_1,\tau])+(f_1)_{*} \cdot J([f_2\cdots f_M,\tau])  =J([f_1,\tau]) + (f_1)_{*}  \sum_{i=2}^M  (g_{i})_{*}\cdot J([f_i,\tau]).
\end{align*}
Since each $f_i$ lies in $\pB$, we have placed $J([f,\tau])$ in the desired form.
\end{proof}

\subsection{Handlebodies and the Luft-Torelli group}
\label{sec:LuftTorelli}
Let $V$ be a handlebody with boundary $\Sigma_g$. Let $D\subset \Sigma_g$ be a small open disc so that $\Sigma_g^1 = \Sigma_{g} \setminus D$.  The \textit{handlebody group} $\pH_{g}^{1}(V)$  is the subgroup of $\Gamma_g^1$  consisting of mapping classes that are restrictions of homeomorphisms of $V$. Denote by $\pL_g^1(V)$  the kernel of the homomorphism 
\begin{equation*}
    \pH_{g}^1(V) \to \Aut \pi_1(V). 
\end{equation*}
The \textit{Luft-Torelli group} of $\Sigma_g^{1}$   is
\begin{equation*}
    \LT_g^1(V) = \pL_g^1(V) \cap \cI_g^1. 
\end{equation*}
A \textit{meridian} is a nontrivial isotopy class of a simple closed curve in $\Sigma_g^1$ that bounds a properly embedded disc in $V$. Note that $T_{\ell}$ lies in $\pL^1_g(V)$ if $\ell$ is a meridian. A \textit{contractible bounding pair} is a bounding pair $(\gamma,\gamma')$ such that $\gamma$ and $\gamma'$ are meridians, and a \textit{contractible bounding pair map} is the product of Dehn twists $T_{\gamma}T_{\gamma'}^{-1}$ where $(\gamma,\gamma')$ is a contractible bounding pair. The following is \cite[Theorem~9]{Pitsch}.
\begin{theorem}
\label{thm:LuftTorelli}
For $g\geq 3$, the Luft-Torelli group $\LT_g^1(V)$ is generated by contractible bounding pair maps.
\end{theorem}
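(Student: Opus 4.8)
Although the statement is quoted from Pitsch, here is the route I would take to prove it, combining Luft's description of $\pL_g^1(V)$ with an analysis of the symplectic representation. Let $\mathcal{C}\le \LT_g^1(V)$ be the subgroup generated by contractible bounding pair maps. Each generator $T_\gamma T_{\gamma'}^{-1}$ indeed lies in $\LT_g^1(V)$: the curves $\gamma,\gamma'$ are meridians, so the map lies in $\pL_g^1(V)$, and they are homologous, so it acts trivially on $H$. Since the handlebody group sends meridians to meridians and preserves both homology classes and disjointness, $\mathcal{C}$ is normal in $\pH_g^1(V)$, hence in $\pL_g^1(V)$; the goal is to show $\mathcal{C}=\LT_g^1(V)$. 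For the framing I would invoke Luft's theorem that $\pL_g^1(V)$ is generated by the Dehn twists $T_\ell$ about meridians $\ell$. Every meridian bounds a disc in $V$, so its class lies in the Lagrangian $L_V:=\ker(H\to H_1(V,\Z))$; by \eqref{eq:symplecticRepDehnTwist} the symplectic representation carries $\pL_g^1(V)$ into the abelian unipotent group $U$ of transvections supported on $L_V$, and $\LT_g^1(V)$ is precisely the kernel of $\pL_g^1(V)\to U$.

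The crux is the following geometric lemma: for any two homologous meridians $a,b$ the element $T_aT_b^{-1}$ lies in $\mathcal{C}$. I would prove it by induction on the geometric intersection number $i(a,b)$, the base case $i(a,b)=0$ being a contractible bounding pair map by definition. For the inductive step, since $[a]=[b]$ the $1$-cycle $a\cup b$ is null-homologous, and an innermost-disc/surgery argument on the meridian discs bounded by $a$ and $b$ in $V$ yields a meridian $c$, homologous to both $a$ and $b$, with $i(a,c),i(c,b)<i(a,b)$; then $T_aT_b^{-1}=(T_aT_c^{-1})(T_cT_b^{-1})$ and induction applies. One consequence is immediate from Lemma~\ref{lem:propertiesDehnTwist}(1): for meridians $\ell,\ell'$ we have $[T_\ell,T_{\ell'}]=T_\ell T_{T_{\ell'}(\ell)}^{-1}$, and because $[\ell],[\ell']\in L_V$ are orthogonal the curve $T_{\ell'}(\ell)$ is homologous to $\ell$, so the lemma gives $[\pL_g^1(V),\pL_g^1(V)]\subseteq\mathcal{C}$ and shows that all meridian twists commute modulo $\mathcal{C}$.

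It remains to match the relations of $U$. Modulo $\mathcal{C}$ the class of $T_\ell$ depends only on $[\ell]\in L_V$, so $\pL_g^1(V)/\mathcal{C}$ is abelian and surjects onto $U$; the kernel of this surjection is detected by the quadratic relations in $U\cong\langle v^2 : v\in L_V\rangle\subset\Sym^2 L_V$, for instance $[v+w]+[v-w]=2[v]+2[w]$ coming from $t_{v+w}t_{v-w}=t_v^2 t_w^2$. To finish one must realize each such relation as an explicit product of contractible bounding pair maps, which I would carry out using standard surface relations (the lantern relation together with handle slides, valid for $g\ge 3$); the separating-meridian case is handled the same way. Combined with the key lemma this yields $\mathcal{C}=\ker(\pL_g^1(V)\to U)=\LT_g^1(V)$.

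The main obstacle is the geometric input underlying the last two paragraphs: producing, for homologous meridians, an intermediate meridian that strictly lowers the intersection number, and then expressing the quadratic transvection relations by concrete products of contractible bounding pair maps. The hypothesis $g\ge 3$ enters precisely here, exactly as it does in the analogous generation results for the Torelli group.
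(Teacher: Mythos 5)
The paper does not prove this statement at all: it is imported verbatim as \cite[Theorem~9]{Pitsch}, so there is no internal proof to compare yours against, and anything you write has to stand on its own. Your outline has a sensible skeleton --- Luft's generation theorem for $\pL_g^1(V)$, the identification of $\LT_g^1(V)$ with the kernel of the map from $\pL_g^1(V)$ onto the abelian group $U$ of transvections supported on the Lagrangian $L_V$, and the normal subgroup $\mathcal{C}$ generated by contractible bounding pair maps --- but it is a reduction, not a proof: the two steps you yourself flag as ``the main obstacle'' \emph{are} the mathematical content of the theorem, and neither follows from anything you wrote.

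Concretely: (1) Your key lemma needs, for homologous meridians $a,b$ with $i(a,b)>0$, a meridian $c$ homologous to \emph{both} with $i(a,c),i(c,b)<i(a,b)$. Innermost-disc surgery of one meridian disc along another does produce meridians of smaller intersection number, but it destroys the homology class: surgering $a$ along an arc of $b$ yields curves $a',a''$ with $[a']+[a'']=[a]$, and neither piece need be homologous to $a$. What you actually need is a connectivity theorem for the complex of meridians lying in a \emph{fixed} homology class --- the handlebody analogue of \cite[Theorem~1.9]{Putman}, which is itself a substantial theorem requiring $g\geq 3$ --- and this is asserted, not proved. (2) Even granting the key lemma, injectivity of $\pL_g^1(V)/\mathcal{C}\to U$ requires (a) a proof that your parallelogram identities generate the full lattice of relations among the squares $v^2$ in $\Sym^2 L_V$ (they likely do not suffice alone; three-variable polarization relations also arise), and (b) an explicit realization of each such relation, \emph{and} of every separating meridian twist, as a product of contractible bounding pair maps. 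Saying ``lantern relation together with handle slides'' names the expected tools but does none of this work, and the separating case is not ``handled the same way'': a separating meridian twist is itself an element of $\LT_g^1(V)$ that must be exhibited as a product of contractible bounding pair maps, a statement of essentially the same depth as the theorem. So the proposal is a plausible strategy with two genuine, acknowledged gaps; to use this result in the paper one should simply cite Pitsch, or else fill in exactly these two steps.
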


Given a handlebody $V$, the kernel $Y_{V}$ of the map
\begin{equation*}
    H_{1}(\Sigma_g^1, \Z) \to H_1(V, \Z)
\end{equation*}
induced by the inclusion $\Sigma_g^1 \hookrightarrow V$ is a Lagrangian subspace of $H$. Define a filtration of $L$ similar to the one in Equation \eqref{eq:Ffiltration}:
\begin{equation*}
    F_q^{V}L = (\wedge^{q} Y_V) \wedge (\wedge^{3-q} H).
\end{equation*}

\begin{proposition}
\label{prop:LuftTorelliF2}
If $f$ is a contractible bounding pair map, then $J(f) \in F_2^V(L)$. In particular, if $g\geq 3$ and
if $f \in \LT_g^1(V)$, then $J(f) \in F_2^V(L)$.
\end{proposition}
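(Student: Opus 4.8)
The plan is to reduce to a single contractible bounding pair map and then argue geometrically inside $V$. Since the Torelli group acts trivially on $H$, the Johnson homomorphism $J\colon \cI_g^1 \to L$ is an honest homomorphism into the abelian group $L$, and $F_2^{V}L = (\wedge^2 Y_V)\wedge H$ is a subgroup of $L$. By Theorem~\ref{thm:LuftTorelli}, for $g\geq 3$ the group $\LT_g^1(V)$ is generated by contractible bounding pair maps, so the second assertion follows formally from the first: if $J(f)\in F_2^{V}L$ for each such generator $f$, then $J$ of any product of these generators and their inverses again lands in the subgroup $F_2^{V}L$. Hence the entire content is the first sentence, which I treat next.

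So let $f = T_\gamma T_{\gamma'}^{-1}$ be a contractible bounding pair map, with meridians $\gamma,\gamma'$ cobounding the subsurface $S\subseteq \Sigma_g^1$ not containing the puncture, and set $W = \image(H_1(S,\Z)\to H)$. Because $\gamma$ bounds a disc in $V$, its class dies in $H_1(V,\Z)$, so $[\gamma]\in Y_V$; likewise $[\gamma']\in Y_V$. By the Johnson formula \eqref{eq:johnsonBPM}, $J(f) = \omega_W \wedge [\gamma]$, where $\omega_W = \sum_i \alpha_i\wedge\beta_i$ for a basis $\{\alpha_i,\beta_i\}$ of $W$ projecting to a symplectic basis of the symplectic reduction $\overline W = W/\langle[\gamma]\rangle$ (wedging with $[\gamma]$ kills the resulting ambiguity in the lift, so $J(f)$ is well defined). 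I observe that $J(f)$ will lie in $F_2^{V}L$ as soon as each $\alpha_i$ can be chosen inside $Y_V$: then $\alpha_i\wedge[\gamma]\in \wedge^2 Y_V$, and each term $\alpha_i\wedge\beta_i\wedge[\gamma]$ lies in $(\wedge^2 Y_V)\wedge H$. Thus the problem reduces to showing that a Lagrangian half of $W$ lies in $Y_V$, i.e. that the image $\overline{W\cap Y_V}$ of $W\cap Y_V$ in $\overline W$ is a Lagrangian subspace.

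The key topological input is where contractibility enters. I would cap $S$ off along $\gamma,\gamma'$ using the two (disjoint) meridian discs they bound in $V$, obtaining a closed genus-$h$ surface $\hat S\subseteq V$ with $H_1(\hat S,\Z)\cong \overline W$ (the boundary class $[\gamma]$ is killed by capping). Since $H_2(V)=0$, the surface $\hat S$ separates $V$; let $V_{\text{in}}$ be the region it cuts off, so $\partial V_{\text{in}} = \hat S$. By ``half lives, half dies'' (Poincar\'e--Lefschetz duality for the pair $(V_{\text{in}},\hat S)$), the kernel of $H_1(\hat S,\Q)\to H_1(V_{\text{in}},\Q)$ is a Lagrangian of $H_1(\hat S,\Q)$, of dimension $h$. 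Every class bounding in $V_{\text{in}}$ also bounds in $V$, so this Lagrangian is contained in $\overline{W\cap Y_V}$; conversely $W\cap Y_V\subseteq Y_V$ is isotropic in $H$, so $\overline{W\cap Y_V}$ is isotropic in $\overline W$ and has dimension at most $h$. Therefore $\overline{W\cap Y_V}$ equals this Lagrangian, giving the half-dimensionality needed. Choosing the $\alpha_i$ to span it, lifted into $W\cap Y_V\subseteq Y_V$, then finishes the argument.

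The main obstacle is exactly this dimension count: it is the \emph{contractibility} hypothesis (both $\gamma$ and $\gamma'$ bound discs, not merely that $f$ is a bounding pair map) that lets me cap $S$ to a closed surface bounding a region $V_{\text{in}}$ inside $V$ and invoke half-lives-half-dies; for a general bounding pair map $W\cap Y_V$ need not be half-dimensional. I would also take brief care of two routine points: that the two meridian discs can be chosen disjoint so that $S$ together with them cobounds a genuine compact submanifold $V_{\text{in}}$, and that the map $H_1(S)\to H_1(V)$ computed through $\partial V$ agrees with the one computed through $\hat S$, so that $\overline{W\cap Y_V}$ really is $\ker\bigl(H_1(\hat S)\to H_1(V)\bigr)$.
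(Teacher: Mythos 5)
Your proposal is correct, but at the key step it takes a genuinely different route from the paper. The paper also reduces, via Theorem~\ref{thm:LuftTorelli}, to a single contractible bounding pair map; it then disposes of that case by a change-of-coordinates principle: one may assume $(\gamma,\gamma')$ is the standard pair of meridians in Figure~\ref{fig:basisJohnsonHom}, and in that figure's basis $J(T_\gamma T_{\gamma'}^{-1})=\sum_i \alpha_i\wedge\beta_i\wedge[\gamma]$ with the $\beta_i$ and $[\gamma]$ visibly meridian classes, hence in $Y_V$. You instead argue intrinsically, with no normalization of the pair: cap $S$ by the two disjoint meridian discs to get a closed surface $\hat S$, use $H_2(V)=0$ to cut off a compact region $V_{\mathrm{in}}\subseteq V$ with $\partial V_{\mathrm{in}}=\hat S$, and apply ``half lives, half dies'' to see that $\ker\bigl(H_1(\hat S;\Q)\to H_1(V_{\mathrm{in}};\Q)\bigr)$ is a Lagrangian contained in $\overline{W\cap Y_V}$, which is itself isotropic, forcing equality. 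What the paper's route buys is brevity and an explicit formula; what yours buys is independence from the implicit (and unproved in the paper) claim that every contractible bounding pair in $(\Sigma_g^1,V)$ can be carried to the standard configuration by a homeomorphism of the handlebody. One point you should tighten: your dimension count is rational, while the conclusion $J(f)\in F_2^V(L)$ is integral. You need $\overline{W\cap Y_V}$ to be a \emph{saturated} Lagrangian sublattice of the unimodular symplectic lattice $\overline W$, so that a $\Z$-basis of it extends to an integral symplectic basis $\{\alpha_i,\beta_i\}$ computing $\omega_W$. Saturation is immediate: $W\cap Y_V$ is the kernel of a map to the torsion-free group $H_1(V,\Z)$, hence saturated in $W$, and this persists in the quotient by $[\gamma]$ because $[\gamma]\in W\cap Y_V$; the extension of a primitive Lagrangian sublattice to a symplectic basis is then standard. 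With that detail supplied, your argument is complete.
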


\begin{proof}
By Theorem \ref{thm:LuftTorelli}, it suffices to prove the first statement. Let $(\gamma,\gamma')$ be a contractible bounding pair; we may assume that $\gamma$ and $\gamma'$ are as in Figure \ref{fig:basisJohnsonHom}. Using the basis in this figure, we compute
\begin{equation*}
    J(T_{\gamma}T_{\gamma'}^{-1}) = \sum_{i=1}^{d-1} \alpha_i \wedge \beta_i \wedge [\gamma].
\end{equation*}
Since $\beta_1,\ldots,\beta_{d-1},[\gamma]$ lie in $Y$, we see that $J(T_{\gamma}T_{\gamma'}^{-1})$ lies in $F_2^{V}L$, as required. 
\end{proof}

\begin{figure}
    \centering
    \includegraphics[width=0.8\textwidth]{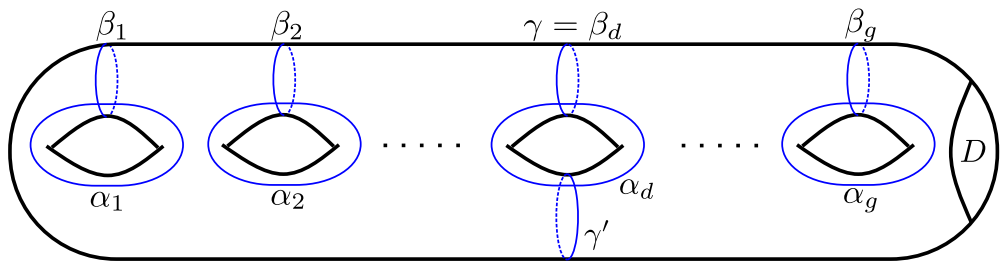}
    \caption{A basis of $\Sigma_g^1$ to compute $J(T_{\gamma}T_{\gamma'}^{-1})$; here the $\beta_1,\ldots,\beta_g$ are meridians of a handlebody }
    \label{fig:basisJohnsonHom}
\end{figure}

\subsection{The Lagrangian case}
\label{sec:LagrangianTorsion}
In this section and the next, we will prove our main result, that the Ceresa class is torsion. We begin by focusing on the case $\Sigma_g^1$; the closed surface case follows readily from this, as we explain in \S \ref{sec:ceresaClosedSurface}.

We say that a collection of nonintersecting simple closed curves $\Lambda$ is \textit{Lagrangian} if the rank of $Y$ is half the rank of $H$, the largest possible rank of $Y$. 
Given any Lagrangian $\Lambda$ on $\Sigma_g^1$ or $\Sigma_g$, there is a handlebody $V$ of $\Sigma_g$ such that each curve in $\Lambda$ is a meridian of $V$; see the proof of \cite[Lemma~5.7]{Hensel}. In this case, the subgroup $Y<H$ has three characterizations:
\begin{enumerate}
    \item $Y$ is the integral span of the curves $[\ell]$ for $\ell \in \Lambda$ (by definition),
    \item $Y$ is the saturation of $\image(\delta_{f}-I)$ for any positive multitwist $f$ supported on $\Lambda$ (by Proposition \ref{prop:YSpan}),
    \item $Y$ is the kernel $Y_V$ of the homomorphism $H_1(\Sigma_g,\Z) \to H_{1}(V,\Z)$. 
\end{enumerate}
Therefore, the filtrations $F_q^V L$ and $F_q L$ agree.

\begin{theorem}
\label{thm:LagrangianTorsion}
Suppose $\Lambda$ is a collection of pairwise disjoint simple closed curves in $\Sigma_g^1$.  Let $f = \prod_{\ell\in \Lambda} T_{\ell}^{c_{\ell}}$ be a positive multitwist, and let V be a handlebody in which each curve in $\Lambda$ is a meridian. Choose a hyperelliptic quasi-involution $\tau$ that lies in $\pH_g^1(V)$. Then $J([f,\tau])$ lies in $F_2^V(L)$.
 In particular, if $\Lambda$ is Lagrangian and $f$ is a positive multitwist supported on $\Lambda$, then  $\mu(f) \in A(\delta_{f})$ and  $\mu(f)$ is torsion.  
\end{theorem}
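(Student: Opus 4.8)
The plan is to show that the commutator $[f,\tau]$ lands in the Luft--Torelli group $\LT_g^1(V)$, and then to invoke Proposition~\ref{prop:LuftTorelliF2}. First I would verify the three memberships this requires. Since each $\ell \in \Lambda$ is a meridian of $V$, each $T_\ell$ lies in $\pL_g^1(V)$, and because $\pL_g^1(V)$ is a subgroup, the positive multitwist $f = \prod_{\ell} T_\ell^{c_\ell}$ lies in $\pL_g^1(V)$ as well. Next, $[f,\tau] \in \cI_g^1$: under the symplectic representation $\tau$ maps to $-I$, which is central in $\Sp(H)$, so the image of $[f,\tau]$ is $[\delta_f,-I] = I$. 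Finally, $[f,\tau] \in \pL_g^1(V)$: this group is the kernel of the homomorphism $\pH_g^1(V) \to \Aut \pi_1(V)$, hence normal in $\pH_g^1(V)$; since $f$ maps to the identity and $\tau \in \pH_g^1(V)$, the commutator $[f,\tau]$ also maps to the identity. Combining these, $[f,\tau] \in \pL_g^1(V) \cap \cI_g^1 = \LT_g^1(V)$, and Proposition~\ref{prop:LuftTorelliF2} then gives $J([f,\tau]) \in F_2^V L$, which is the first assertion.

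For the ``in particular'' part, suppose $\Lambda$ is Lagrangian, so that $Y = Y_V$ has maximal rank $g$. By the three characterizations of $Y$ recorded before the theorem statement, the filtrations $F_q^V L$ and $F_q L$ coincide, and therefore $J([f,\tau]) \in F_2 L$. By Proposition~\ref{prop:YSpan} together with the isotropy of $Y$, the element $\delta_f$ satisfies $(\delta_f - I)^2 = 0$ with $Y$ equal to the saturation of $\image(\delta_f - I)$, so we are in the framework of \S\ref{sec-algebra-wedge-H}. Under the standard isomorphism $H^1(\langle \delta_f \rangle, L) \cong L/(\delta_f - I)L$, the Ceresa class $\mu(f)$ corresponds to the class of the cocycle value $J([f,\tau])$ modulo $(\delta_f - I)L$. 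Since $J([f,\tau])$ lies in $F_2 L$, this class lies in the image of $F_2 L$, which is precisely $A(\delta_f)$ by Proposition~\ref{prop:AdeltaBdeltaFinite}; as the same proposition shows $A(\delta_f)$ is finite, $\mu(f)$ is torsion.

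The step I expect to be the main obstacle is the first one: recognizing that the relevant commutator lands in the Luft--Torelli group and correctly checking each membership --- in particular the normality argument placing $[f,\tau]$ in $\pL_g^1(V)$ and the centrality argument placing it in the Torelli group. Once this is established, the remainder is bookkeeping with the structural results of \S\ref{sec-algebra-wedge-H}, namely the identification of $A(\delta_f)$ as the image of $F_2 L$ and its finiteness. One point to watch is that Proposition~\ref{prop:LuftTorelliF2} relies on Theorem~\ref{thm:LuftTorelli}, which requires $g \geq 3$, so the argument is implicitly confined to that range; this is consistent with the genus hypotheses elsewhere in the paper.
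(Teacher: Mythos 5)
Your argument for $g \geq 3$ coincides with the paper's: the same three membership checks place $[f,\tau]$ in $\LT_g^1(V) = \pL_g^1(V) \cap \cI_g^1$, Proposition~\ref{prop:LuftTorelliF2} gives $J([f,\tau]) \in F_2^V(L)$, and the Lagrangian bookkeeping identifying $\mu(f)$ with an element of $A(\delta_f)$ via Proposition~\ref{prop:AdeltaBdeltaFinite} is also exactly what the paper does. The genuine gap is the case $g = 2$, which your proof omits and which is really part of the statement: the theorem carries no genus hypothesis, and on $\Sigma_2^1$ the conclusion is not vacuous ($L$ has rank $4$, and the Johnson homomorphism on $\cI_2^1$ is nonzero since bounding pair maps exist there). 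Your closing remark that the restriction to $g \geq 3$ ``is consistent with the genus hypotheses elsewhere in the paper'' is precisely where the gap hides: Theorem~\ref{thm:LuftTorelli} (Pitsch's generation of $\LT_g^1(V)$ by contractible bounding pair maps) holds only for $g \geq 3$, so the second sentence of Proposition~\ref{prop:LuftTorelliF2} --- the one your argument leans on --- is unavailable at $g = 2$, and nothing in your write-up replaces it.

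The paper fills this in with an explicit analysis that occupies half of its proof. For $g = 2$ it suffices to treat the three maximal arrangements of pairwise disjoint simple closed curves on $\Sigma_2^1$ (Figure~\ref{fig:genus2n1}), taking $V$ to be the inside handlebody and $\tau$ the rotation by $180^{\circ}$ about the horizontal axis. In each case $\tau(\ell_i)$ is disjoint from every $\ell_j$, so all the relevant Dehn twists commute and $[f,\tau] = \prod_i \bigl(T_{\ell_i}T_{\tau(\ell_i)}^{-1}\bigr)^{c_i}$; each factor is the identity, a product of two separating twists (each killed by $J$), or a contractible bounding pair map. One then needs only the first sentence of Proposition~\ref{prop:LuftTorelliF2} --- that $J$ of a contractible bounding pair map lies in $F_2^V(L)$ --- which requires no genus hypothesis. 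To make your proof complete, you must add this (or an equivalent) argument for $g = 2$.
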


\begin{proof}
Suppose $g\geq 3$.  The commutator $[f,\tau]$ lies in $\pL_{g}^1(V)$ since each $T_{\ell}$ lies in $\pL_g^1(V)$ and $\pL_g^1(V)$ is a normal subgroup of $\pH_g^1(V)$. The commutator also lies in $\cI_g^1$ since $\tau$ maps to the center of $\Sp(H)$. So $[f,\tau] \in \LT_g^1(V)$ and therefore $J([f,\tau]) \in F_2^V(L)$ by Proposition \ref{prop:LuftTorelliF2}.  

Next, suppose $g=2$. There are $15$  arrangements $\Lambda$ on $\Sigma_2^1$; it suffices to consider the 3 maximal arrangements as illustrated in Figure \ref{fig:genus2n1}. For each surface, regard the ``inside'' as the handlebody $V$ and $\tau$ is rotation by $180^{\circ}$ about the axis horizontal to the page. For the left or middle case, label the isotopy classes in $\Lambda$ in any order by $\ell_1, \ell_2, \ell_3, \ell_4$ and let $c_i=c_{\ell_i}$. The eight isotopy classes $\{\ell_i,\tau(\ell_i) \, : \, i=1,2,3,4\}$ pairwise have geometric intersection number $0$. Thus the corresponding collection of Dehn twists commute, so
\begin{equation*}
    [f,\tau] = \prod_{i=1}^4 (T_{\ell_i} T_{\tau(\ell_i)}^{-1})^{c_i}.
\end{equation*}
Each $T_{\ell_i} T_{\tau(\ell_i)}^{-1}$ is either the identity or a contractible bounding pair, whence  $J([f,\tau]) \in F_2^V(L)$ by Proposition \ref{prop:LuftTorelliF2}.  

Finally, consider the arrangement on the right in Figure \ref{fig:genus2n1} and label the isotopy classes left to right by $\ell_1,\ell_2,\ell_3,\ell_4$; the curves $\ell_2$ and $\ell_3$ are separating. Since $T_{\ell_1}T_{\tau(\ell_1)}^{-1} = T_{\ell_2}T_{\tau(\ell_2)}^{-1} = \id$ and $\ell_3, \tau(\ell_3)$ are separating curves, we have
\begin{equation*}
    J([f,\tau]) = c_{4} J(T_{\ell_4}T_{\tau(\ell_4)}^{-1})
\end{equation*}
which lies in $F_2(L)$ by Proposition \ref{prop:LuftTorelliF2} since $(\ell_4, \tau(\ell_{4}))$ is a contractible bounding pair.

The last statement of the theorem follows from the fact that $F_2^V(L) = F_2(L)$ in the Lagrangian case and the finiteness of $A(\delta_{f})$.   
\end{proof}

\begin{figure}[tbh]
	\centering
	\includegraphics[width=\textwidth]{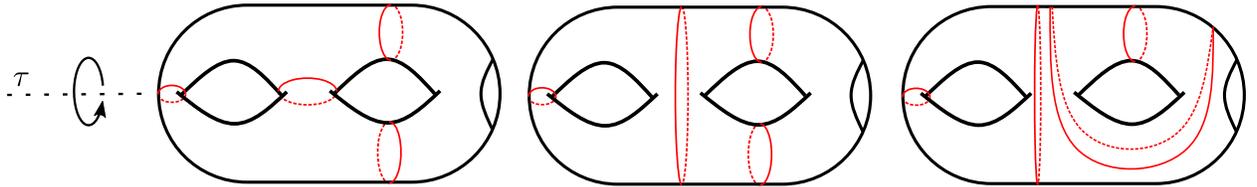}
	\caption{Maximal arrangements of non-intersecting curves on $\Sigma_2^1$} \label{fig:genus2n1}
\end{figure}

Suppose now that $\Lambda$ is Lagrangian and $f$ is a positive multitwist supported on $\Lambda$. Because $\mu(f) \in A(\delta_{f})$, we can consider the image of this class in $B(\delta_{f})$, which we denote by $w_{f}$. The reason to consider $w_{f}$ is that it admits a relatively simple formula. 

\begin{proposition}
\label{prop:muBdelta}
If $\Lambda$ be a Lagrangian arrangement of curves on $\Sigma_g$ and $f = \prod T_{\ell}^{c_{\ell}}$ is a positive multitwist supported on $\Lambda$,  then
\begin{equation*}
 w_{f}  =  \sum_{\ell\in \Lambda} c_{\ell} \cdot J([T_{\ell},\tau]).
\end{equation*}
\end{proposition}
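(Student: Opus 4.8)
The plan is to reduce the statement to a single triviality observation, after recording how $w_f$ is represented. Under the identifications $A(\delta_f)\cong F_2L/(\delta_f-I)F_1L$ and $B(\delta_f)\cong F_2L/((\delta_f-I)F_1L + F_3L)$ furnished by Propositions~\ref{prop:gradedSurjectiveMaximalRank} and \ref{prop:AdeltaBdeltaFinite}, the class $w_f$ is represented by $J([f,\tau])$, which does lie in $F_2L$ by Theorem~\ref{thm:LagrangianTorsion}. Hence it suffices to establish the congruence $J([f,\tau]) \equiv \sum_{\ell\in\Lambda} c_\ell\, J([T_\ell,\tau]) \pmod{F_3L}$, since $F_3L$ is among the relations defining $B(\delta_f)$.

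The key observation I would isolate is that the action of $\pB$ on $F_2L$ is trivial modulo $F_3L$. Because the curves of $\Lambda$ are pairwise non-intersecting, $Y=\langle [\ell] : \ell\in\Lambda\rangle$ is isotropic, so $\hat i([\ell'],[\ell])=0$ for all $\ell,\ell'\in\Lambda$; by Equation~\eqref{eq:symplecticRepDehnTwist} this means every $g\in\pB$ fixes $Y$ pointwise while $(\delta_g-I)H\subseteq Y$. Consequently, for a generator $y_1\wedge y_2\wedge h$ of $F_2L$ one computes $(\delta_g-I)(y_1\wedge y_2\wedge h)=y_1\wedge y_2\wedge(\delta_g-I)h\in \wedge^3Y = F_3L$, so that $(g)_*\zeta \equiv \zeta \pmod{F_3L}$ for every $\zeta\in F_2L$ and every $g\in\pB$.

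Granting this, the computation collapses. Enumerating $\Lambda=\{\ell_1,\dots,\ell_m\}$ and setting $f_i=T_{\ell_i}^{c_{\ell_i}}$ so that $f=\prod_i f_i$, Proposition~\ref{prop:splitDelta} produces $g_i\in\pB$ with $J([f,\tau])=\sum_i (g_i)_*\,J([f_i,\tau])$; since each $J([f_i,\tau])\in F_2L$, the triviality observation gives $J([f,\tau])\equiv\sum_i J([f_i,\tau]) \pmod{F_3L}$. For each factor, the crossed-homomorphism identity of Proposition~\ref{prop:muIndependentTau} yields $J([T_\ell^{c},\tau])=\sum_{j=0}^{c-1}(T_\ell^{\,j})_*\,J([T_\ell,\tau])$, and because $J([T_\ell,\tau])\in F_2L$ (again by Theorem~\ref{thm:LagrangianTorsion}) each summand is congruent to $J([T_\ell,\tau])$ modulo $F_3L$, whence $J([T_\ell^{c},\tau])\equiv c\,J([T_\ell,\tau])$. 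Combining the two congruences proves the claim after passing to $B(\delta_f)$.

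The only genuine content is the triviality-mod-$F_3L$ claim; the rest is bookkeeping with the crossed homomorphism. The two points I would verify carefully are that the conjugators $g_i$ of Proposition~\ref{prop:splitDelta} genuinely lie in $\pB$ (so that the observation applies to them) and that the membership $J([T_\ell,\tau])\in F_2L$ holds for a single twist — both of which are already available, the former from the statement of Proposition~\ref{prop:splitDelta} and the latter from Theorem~\ref{thm:LagrangianTorsion}.
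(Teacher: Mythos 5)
Your proof is correct and takes essentially the same route as the paper's: decompose $J([f,\tau])$ via Proposition \ref{prop:splitDelta} and then discard the conjugators $(g_\ell)_*$ using the fact that $\pB$ acts trivially on $B(\delta_{f})$. The only difference is that you actually verify that triviality (showing $\pB$ acts trivially on $F_2L$ modulo $F_3L$) and handle the powers $T_{\ell}^{c_{\ell}}$ explicitly via the crossed-homomorphism identity, two points the paper's proof asserts or leaves implicit.
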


\begin{proof}
By Proposition  \ref{prop:splitDelta}, 
\begin{equation*}
    J([f,\tau]) = \sum_{\ell \in \Lambda} c_{\ell} \cdot (g_{\ell})_* \cdot J([T_{\ell},\tau]) \hspace{20pt} \text{in } \hspace{5pt} B(\delta_{f})
\end{equation*}
where each $g_{\ell}\in \pB$. The proposition now follows from the fact that  $\pB$ acts trivially on $B(\delta_{f})$.
\end{proof}

\subsection{The non-Lagrangian case}

As we will see in Example \ref{example:thetaGraph}, when the collection of homology classes of the curves in $\Lambda$ do not span a Lagrangian subspace of $H$, then the Ceresa class of a positive multitwist $f$ need not live in $A(\delta_{f})$. Nevertheless, this class is still torsion.

\begin{theorem}
\label{thm:nonLagrangianTorsion}
Suppose $\Lambda$ is any collection of pairwise disjoint simple closed curves in $\Sigma_g^1$, and $f = \prod_{\ell\in \Lambda} T_{\ell}^{c_{\ell}}$ is a positive multitwist. Then $\mu(f)$ is torsion. 
\end{theorem}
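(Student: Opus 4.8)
The plan is to work through the identification $H^1(\langle\delta_f\rangle,L)\cong L/(\delta_f-I)L$ (the standard fact invoked in the proof of Proposition~\ref{prop:AdeltaBdeltaFinite}), under which $\mu(f)$ is the class of $J([f,\tau])$. Write $N=\delta_f-I$, acting on $H$ and, through $\wedge^3$, on $L$. Then $\mu(f)$ is torsion if and only if $J([f,\tau])$ lies in $N L_{\Q}$. By Proposition~\ref{prop:YSpan} the saturation of $\image(N|_H)$ is $Y=\langle[\ell]:\ell\in\Lambda\rangle$, and by Equation~\eqref{eq:fMinusIOnH} its kernel is $\ker(N|_H)=Y^{\perp}=\{h:\hat i([\ell],h)=0\text{ for all }\ell\in\Lambda\}$. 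So the goal is to exhibit $J([f,\tau])$ as an element of $N L_{\Q}$.

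First I would reduce to single Dehn twists. Factoring $f$ into its Dehn-twist factors $T_{\ell_i}$ and applying Proposition~\ref{prop:splitDelta} gives $J([f,\tau])=\sum_i\,(g_i)_*\,J([T_{\ell_i},\tau])$ for some $g_i\in\pB$. Because the curves in $\Lambda$ are pairwise disjoint, the twists $T_\ell$ commute by Lemma~\ref{lem:propertiesDehnTwist}, so each $(g_i)_*\in\Sp(H)$ commutes with $\delta_f$, hence with $N$, and therefore preserves the subspace $N L\subseteq L$. Thus it suffices to prove that $J([T_\ell,\tau])\in N L_{\Q}$ for each individual $\ell\in\Lambda$.

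The heart of the argument is a geometric choice of $\tau$. I would construct a hyperelliptic quasi-involution $\tau$ with the property that for every non-separating $\ell\in\Lambda$ the curve $\tau(\ell)$ is disjoint from $\ell$ and the subsurface $S_\ell$ not containing the puncture cobounded by $\ell\cup\tau(\ell)$ is disjoint from $\Lambda\setminus\{\ell\}$. Such a $\tau$ would be built by the surface-from-graph procedure of \S\ref{subsec:tropicalCurves} together with the block-by-block method of Lemma~\ref{lem:tauHyperelliptic}: on the pieces into which $\Lambda$ cuts $\Sigma_g$ one places local orientation-preserving maps acting as $-I$ on homology, arranged so that each $\ell$ is carried to a disjoint curve while its cobounding subsurface stays localized within a single block. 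With such a $\tau$, for separating $\ell$ one has $[\ell]=0$ and $J([T_\ell,\tau])=0$. For non-separating $\ell$, Lemma~\ref{lem:propertiesDehnTwist} gives $[T_\ell,\tau]=T_\ell T_{\tau(\ell)}^{-1}$, a bounding pair map, so by Equation~\eqref{eq:johnsonBPM} we have $J([T_\ell,\tau])=\omega_{S_\ell}\wedge[\ell]$, where $\omega_{S_\ell}=\sum_j\alpha_j\wedge\beta_j$ for a symplectic basis $\{\alpha_j,\beta_j\}$ of $H_1(S_\ell)$.

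To finish, observe that every class in $H_1(S_\ell)$ has zero algebraic intersection with $[\ell]$, since $\ell$ is a boundary component of $S_\ell$, and with every $[\ell_j]$ for $\ell_j\in\Lambda\setminus\{\ell\}$, since $S_\ell$ is disjoint from these curves; hence $H_1(S_\ell)\subseteq Y^{\perp}=\ker N$, so each $\alpha_j,\beta_j\in\ker N$, while $[\ell]\in Y=\image(N|_H)_{\Q}$. Applying Proposition~\ref{prop:F1KernelDeltaITrivial} rationally then shows each $\alpha_j\wedge\beta_j\wedge[\ell]\in N L_{\Q}$, so $J([T_\ell,\tau])\in N L_{\Q}$ and, summing over the factors, $J([f,\tau])\in N L_{\Q}$; therefore $\mu(f)$ is torsion. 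The main obstacle is precisely the geometric construction of $\tau$ in the previous paragraph: unlike the hyperelliptic situation of Lemma~\ref{lem:tauHyperelliptic}, here $\Lambda$ carries no ambient symmetry, so one must manufacture a quasi-involution acting as $-I$ on $H$ whose associated bounding-pair subsurfaces are each confined away from the remaining curves of $\Lambda$. Once this is in place, the rest follows formally from the reduction to single twists and Proposition~\ref{prop:F1KernelDeltaITrivial}.
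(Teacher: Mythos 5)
Your overall algebraic frame is fine: the identification $H^1(\langle\delta_f\rangle,L)\cong L/(\delta_f-I)L$, the reduction via Proposition~\ref{prop:splitDelta} (the $(g_i)_*$ do commute with $\delta_f$ and hence preserve $(\delta_f-I)L_\Q$), and the final application of Proposition~\ref{prop:F1KernelDeltaITrivial}. But the step you yourself flag as the ``main obstacle'' --- the construction of a hyperelliptic quasi-involution $\tau$ such that each non-separating $\ell\in\Lambda$ and $\tau(\ell)$ cobound a subsurface $S_\ell$ disjoint from $\Lambda\setminus\{\ell\}$ --- is not merely unproven; it is impossible in general, and the paper's own computations show this. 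Take $\Lambda$ to be the six curves on $\Sigma_3$ dual to $K_4$ (Example~\ref{ex:K4}), where every complementary component is a pair of pants. If your $\tau$ existed, then for each $i$ the connected subsurface $S_{\ell_i}$, being disjoint from the other five curves and having $\ell_i$ in its boundary, would lie in the genus-zero component of $\Sigma_3\setminus\bigcup_{j\neq i}\ell_j$ containing $\ell_i$ (a four-holed sphere); a compact connected genus-zero surface with exactly two boundary circles is an annulus, so $\ell_i$ would be isotopic to $\tau(\ell_i)$ and $[T_{\ell_i},\tau]=T_{\ell_i}T_{\tau(\ell_i)}^{-1}=1$ for every $i$. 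Then $[f,\tau]=1$, so $J([f,\tau])=0$ and $\mu(f)=0$ --- contradicting Example~\ref{ex:K4} and Remark~\ref{rmk:K4All1}, where this class is nonzero of order $16$ when all $c_i=1$. The same obstruction is visible homologically: when $\Lambda$ is Lagrangian with zero vertex weights, $\ker(\delta_f-I)_\Q=Y_\Q$ is isotropic, whereas the homology image of a bounding-pair subsurface of genus $h$ contains a rank-$2h$ symplectic subspace; so your requirement $H_1(S_\ell)\subseteq\ker(\delta_f-I)$ forces $h=0$, i.e.\ forces every bounding-pair map to be trivial. In effect, your $\tau$ exists only when the Ceresa class already vanishes (compare Lemma~\ref{lem:ceresaTrivialRep}), so the strategy cannot prove torsionness precisely in the cases where the theorem has content.

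This is exactly why the paper's proof takes a different shape: it never attempts to make the individual contributions $J([T_\ell,\tau])$ land in $(\delta_f-I)L_\Q$, and its bounding-pair subsurfaces are allowed to cross $\Lambda$ (for $K_4$ they must). Instead it fixes one $\tau$ adapted to a symplectic basis extending a spanning set of the $[\ell]$'s, shows that the \emph{whole} commutator $[f,\tau]$ lies in the Luft--Torelli group of every handlebody $V_{IJ}$ in a family indexed by partitions of $\{1,\ldots,g-d\}$, so that by Theorem~\ref{thm:LuftTorelli} and Proposition~\ref{prop:LuftTorelliF2} one gets $J([f,\tau])\in\pF=\bigcap_{I,J}F_2^{V_{IJ}}(L)$ (Lemma~\ref{lem:tauInVIJ}), and then verifies by direct computation that $\pF\subseteq(\delta_f-I)L_\Q$ (Lemma~\ref{lem:pFFinite}). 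Note that $\pF$ contains generators such as $\abb{i}{i}{k}$ (with $i\le g-d$, $k>g-d$) which are not of the form $\omega_{S}\wedge[\ell]$ with $S$ avoiding $\Lambda$; handling such terms, rather than terms localized to single twists, is where the real work of the non-Lagrangian case lies.
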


The proof will occupy this whole subsection. As usual, let $Y = \langle [\ell] \, :\, \ell\in \Lambda \rangle$, and  $d = \dim Y$.  Choose a subset of loops $\ell_1, \ldots, \ell_d$ in $\Lambda$ which are linearly independent (and thus rationally span $Y$.) Fix a collection of isotopy classes of simple closed curves $a_1,\ldots,a_g, b_1,\ldots,b_g$ on $\Sigma_g^1$ such that
\begin{enumerate}
    \item the classes $a_i$ and $b_j$ for $i,j=1,\ldots,g$ are pairwise nonintersecting except $i(a_i,b_i)=1$;
    \item the classes $a_i$ and $b_j$ for $i,j=1,\ldots,g-d$ do not intersect the classes in $\Lambda$;
    \item the homology classes $\alpha_i = [a_i]$ and $\beta_j = [b_j]$ form a symplectic basis for $H$;
    \item $b_{g+1-j} = \ell_j$ for $j=1,\ldots, d$.
\end{enumerate}
Let us explain why such a collection of curves exists. Let $\bG = (G,w)$ be the weighted dual graph of $\Lambda$. Recall that the sum of the weights $w(v)$ over all vertices $v$ is $g-d$. For each vertex $v$ with $w(v)>0$, choose a subsurface $S\cong \Sigma_{w(v)}^1$ of $\Sigma_v$ so that $\partial \Sigma_v$ lies in $\Sigma_v \setminus S$. Now define $w(v)$ of the $a_i$ and $b_i$ for $i\leq g-d$ by the basis in Figure \ref{fig:basisJohnsonHom}. We have already specified $b_{g-d+1},\ldots,b_g \in \Lambda$. Finally, we may choose the remaining $a_i$ to complete the symplectic basis by  \cite[Lemma~A.3]{PutmanCutAndPaste}.


\begin{figure}
    \centering
    \includegraphics[height=5cm]{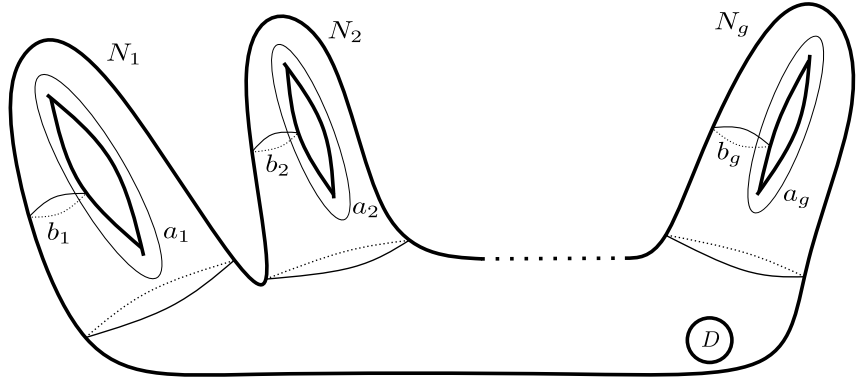}
    \caption{A handlebody such that each $b_i$ is a meridian}
    \label{fig:regularNbd}
\end{figure}

Choose closed regular neighborhoods $N_i$ of $a_i \cup b_i$; each $N_i$ is homeomorphic to a torus with one boundary component, and $\partial N_i$ is a separating curve. Choose $N_i$ small enough so that, for each $i\leq g-d$, the boundary $\partial N_i$ does not intersect the curves in $\Lambda$.
The mapping class group $\Gamma_1^1$ is isomorphic to $\SL_2(\Z)$ and the mapping class $\tau_i$ corresponding to $-I\in \SL_2(\Z)$ takes $a_i$ to $a_i$ and $b_i$ to $b_i$, reversing their orientations. We claim that if $M$ is a handlebody of $N_i$, then $\tau_i \in \pH_1^1(M)$. Since any two handlebody subgroups are conjugate to each other \cite[\S~3]{Hensel} and $-I$ lies in the center of $\SL_2(\Z)$, it suffices to show that $\tau_i$ lies in $\pH_1^1(M)$ for just one $M$. Take $\Sigma_1^{1}$ to be the surface in \ref{fig:basisJohnsonHom} (for $g=1$) and let $M$ be the handlebody on the inside. A representative of the mapping class $\tau_i$ is given by a rotation of $180^{\circ}$ horizontal to the page, and applying a suitable isotopy so that it is the identity on $D$. Extending by the identity defines $\tau_i$ on $\Sigma_{g}^{1}$, and the product  $\tau = \tau_1 \cdots \tau_g$ is a hyperelliptic quasi-involution on $\Sigma_{g}^{1}$.

For $i=1,\ldots,g-d$, let $A_i$ and $B_i$ be handlebodies for $N_i$ so that $a_i$ is a meridian in $A_i$ and $b_i$ is a meridian in $B_i$. The surface $S = \Sigma_g^1 \setminus \cup_{i=1}^{g-d} N_i^{\circ}$ is homeomorphic to $\Sigma_{d}^{g-d+1}$ (recall that $N_i^{\circ}$ denotes the relative interior of $N_i$).  Let $V$ be a handlebody for the surface obtained by capping off the boundary components of $S$ so that $\ell$ is a meridian of $V$ for each $\ell \in \Lambda$. For any 2-part partition $I,J$ of $\{1,\ldots,g-d\}$, let $V_{IJ}$ be the handlebody of $\Sigma_g^1$ obtained by attaching $A_i$ and $B_j$ to $V$ for $i\in I$ and $j\in J$.  By the previous paragraph, we have that $\tau \in \pH_g^1(V_{IJ})$. 



Let
\begin{equation*}
    \pF = \bigcap_{I,J} F_2^{V_{IJ}} (L)
\end{equation*}
where the intersection is taken over all 2-part partitions $I,J$ of $\{1,\ldots,g-d\}$.  We note that in the case already treated, where $\Lambda$ is Lagrangian, $g=d$, so $I$ and $J$ are both empty and there is only a single choice for $V_{IJ}$, namely the handlebody $V$ of the previous section.

\begin{lemma}
\label{lem:tauInVIJ}
The class
$J([f,\tau])$ lies in $\pF$. 
\end{lemma}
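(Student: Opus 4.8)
The plan is to reduce the lemma to a direct application of the first (non-Lagrangian) part of Theorem~\ref{thm:LagrangianTorsion}, invoked separately for each handlebody $V_{IJ}$. The crucial point is that the opening sentence of that theorem does \emph{not} require $\Lambda$ to be Lagrangian: it asserts that whenever $\Lambda$ is a collection of pairwise disjoint simple closed curves all of which are meridians of a handlebody $V$, and $\tau$ is a hyperelliptic quasi-involution lying in $\pH_g^1(V)$, then $J([f,\tau]) \in F_2^V(L)$. Consequently it suffices to check, for each fixed $2$-part partition $I,J$ of $\{1,\ldots,g-d\}$, that the two hypotheses of that theorem hold with $V$ replaced by $V_{IJ}$.

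The first hypothesis, that $\tau \in \pH_g^1(V_{IJ})$, has already been verified in the construction preceding the lemma, where $\tau = \tau_1\cdots \tau_g$ was arranged to lie in the handlebody group of every $V_{IJ}$. The second hypothesis is that each $\ell \in \Lambda$ is a meridian of $V_{IJ}$, and this is where the only (minor) care is needed. By construction $\ell$ is a meridian of $V$, and $V_{IJ}$ is obtained from $V$ by gluing on the handlebodies $A_i$ for $i \in I$ and $B_j$ for $j \in J$ along the separating tori $\partial N_i$. Since $V$ sits inside $V_{IJ}$ as a sub-handlebody, a properly embedded disc in $V$ with boundary $\ell$ remains a properly embedded disc in $V_{IJ}$; hence $\ell$ is a meridian of $V_{IJ}$ as well, for every $\ell \in \Lambda$.

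With both hypotheses in place, Theorem~\ref{thm:LagrangianTorsion} yields $J([f,\tau]) \in F_2^{V_{IJ}}(L)$ for each partition $I,J$. Intersecting over all $2$-part partitions then gives
\begin{equation*}
J([f,\tau]) \in \bigcap_{I,J} F_2^{V_{IJ}}(L) = \pF,
\end{equation*}
which is the assertion of the lemma. I do not expect any substantive obstacle here: the entire content is the observation that the general form of Theorem~\ref{thm:LagrangianTorsion} is simultaneously applicable to all the $V_{IJ}$, together with the bookkeeping that the curves of $\Lambda$ persist as meridians once the auxiliary handlebodies $A_i, B_j$ are attached. (The substance of the argument is deferred to later in the subsection, where one must show that membership in $\pF$ forces $\mu(f)$ to be torsion; that is where the genuine work of the non-Lagrangian case lies.)
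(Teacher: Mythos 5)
Your proof is correct and essentially the same as the paper's: the paper proves the lemma by re-running, for each $V_{IJ}$, exactly the argument that establishes the first assertion of Theorem~\ref{thm:LagrangianTorsion} (namely that $[f,\tau]\in \LT_g^1(V_{IJ})$ because $f\in\pL_g^1(V_{IJ})$, $\tau\in\pH_g^1(V_{IJ})$, $\pL_g^1(V_{IJ})$ is normal in $\pH_g^1(V_{IJ})$, and $\tau$ acts centrally on $H$, followed by Proposition~\ref{prop:LuftTorelliF2}), whereas you invoke that assertion as a black box applied to each $V_{IJ}$. Your verification that each $\ell\in\Lambda$ remains a meridian when the sub-handlebody $V$ is enlarged to $V_{IJ}$ is the same point the paper uses implicitly when it asserts that $f$ lies in each $\pL_g^1(V_{IJ})$.
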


\begin{proof}
By Proposition \ref{prop:LuftTorelliF2}, it suffices to show that $[f,\tau] \in \LT_g^1(V_{IJ})$ for all 2-part partitions $I,J$ of $\{1,\ldots,g-d\}$. 
The multitwist $f$ lies in each $\pL_g^1(V_{IJ})$, and $\tau$ lies in each $\pH_g^1(V_{IJ})$. Since $\pL_g^1(V_{IJ})$ is a normal subgroup of $\pH_g^1(V_{IJ})$ and the symplectic representation of $\tau$ is in the center of $\Sp(H)$, we have that $[f,\tau]\in \LT_g^1(V_{IJ})$, as required.
\end{proof}

\begin{lemma}
\label{lem:pFFinite}
The image of $\pF$ in  $L/(\delta_{f}-I)L$, which can be expressed as
\begin{equation}
\label{eq:pFInCohomology}
    \frac{\pF + (\delta_{f}-I)L}{(\delta_{f}-I)L} \cong \frac{\pF}{(\delta_{f}-I)L \cap \pF} \; ,
\end{equation}
is a finite group.
\end{lemma}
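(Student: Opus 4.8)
The plan is to reduce the statement to a purely rational containment, then pin down $\pF$ explicitly in the symplectic basis constructed above, and finally check the containment piece by piece using results already proved. Since $\pF$ is a subgroup of the free $\Z$-module $L$, the quotient in \eqref{eq:pFInCohomology}, namely $\pF/(\pF \cap (\delta_f-I)L)$, is a finitely generated abelian group; it is finite if and only if it has rank zero, i.e. if and only if $\pF \otimes \Q \subseteq (\delta_f-I)L \otimes \Q$. So the first step is to record that it suffices to prove the rational containment $\pF_{\Q} \subseteq (\delta_f-I)L_{\Q}$.

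The second step, which I expect to be the main obstacle, is to compute $\pF$ explicitly. Write $P = \{1,\ldots,g-d\}$ and $Q = \{g-d+1,\ldots,g\}$, so that by condition $(4)$ we have $Y = \Span\{\beta_k : k\in Q\}$. In the chosen symplectic basis each $Y_{V_{IJ}}$ is the coordinate Lagrangian $\langle \alpha_i : i\in I\rangle \oplus \langle \beta_j : j\in J\rangle \oplus Y$, so each $F_2^{V_{IJ}}(L) = (\wedge^2 Y_{V_{IJ}})\wedge H$ is spanned by those wedges of three basis vectors having at least two factors in the coordinate subspace $Y_{V_{IJ}}$. Since all the $F_2^{V_{IJ}}(L)$ are coordinate subspaces, their intersection $\pF$ is spanned by the monomials lying in every one of them. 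A short combinatorial analysis of the condition ``at least two factors in $Y_{V_{IJ}}$ for every partition $(I,J)$ of $P$'' shows that, because an adversarial choice of partition can expel any factor $\alpha_i$ or $\beta_i$ with $i \in P$ from $Y_{V_{IJ}}$ unless its partner is also present, a monomial survives in $\pF$ exactly when it either has two factors already in $Y$, or has the form $\alpha_i \wedge \beta_i \wedge \beta_k$ with $i\in P$ and $k \in Q$. Hence
\[
\pF = F_2 L + \Span\{\, \alpha_i \wedge \beta_i \wedge \beta_k \;:\; i\in P,\ k\in Q \,\}.
\]

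The final step is to verify that each summand lies in $(\delta_f-I)L_{\Q}$. For $F_2 L$ this is exactly the argument in the proof of Proposition~\ref{prop:AdeltaBdeltaFinite}: applying Lemma~\ref{lem:GradedSurjective} with $k=3$ gives surjectivity of $(\delta_f-I)_{\Q}$ on $\gr_q^F L_{\Q}$ for $q\geq 2$, whence $F_2 L_{\Q} \subseteq (\delta_f-I)F_1 L_{\Q} \subseteq (\delta_f-I)L_{\Q}$. For the new generators, note that by construction condition $(2)$ the curves $a_i,b_i$ with $i\in P$ are disjoint from every $\ell\in\Lambda$, so $\hat{i}([\ell],\alpha_i)=\hat{i}([\ell],\beta_i)=0$ and \eqref{eq:fMinusIOnH} gives $(\delta_f-I)\alpha_i = (\delta_f-I)\beta_i = 0$; thus $\alpha_i,\beta_i \in \ker(\delta_f-I)$. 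Meanwhile $\beta_k \in Y_{\Q} = \image(\delta_f-I)_{\Q}$ by Proposition~\ref{prop:YSpan}. Reordering and applying Proposition~\ref{prop:F1KernelDeltaITrivial} over $\Q$ with $y=\beta_k$ and $z_1=\alpha_i$, $z_2=\beta_i$ then shows $\alpha_i\wedge\beta_i\wedge\beta_k = \pm\,\beta_k\wedge\alpha_i\wedge\beta_i \in (\delta_f-I)L_{\Q}$. This is the subtle point: the generators $\alpha_i\wedge\beta_i\wedge\beta_k$ live in $\gr_1^F L$, where Lemma~\ref{lem:GradedSurjective} provides no surjectivity, so the graded argument used for $F_2 L$ does not apply and one genuinely needs the observation that all three factors lie in $\ker(\delta_f-I)$ while one of them also lies in the image. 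Combining the two cases yields $\pF_{\Q} \subseteq (\delta_f-I)L_{\Q}$, completing the proof.
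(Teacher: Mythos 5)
Your proposal is correct and takes essentially the same route as the paper's proof: both reduce to the rational containment $\pF_{\Q} \subseteq (\delta_{f}-I)L_{\Q}$, identify $\pF$ as the coordinate subspace spanned by the monomials of $F_2L$ together with the wedges $\alpha_i\wedge\beta_i\wedge\beta_k$ (with $i\leq g-d < k$), and then handle the former by graded surjectivity (Lemma \ref{lem:GradedSurjective} via Proposition \ref{prop:AdeltaBdeltaFinite}) and the latter by wedging a preimage of $\beta_k$ with the $(\delta_f-I)$-invariant vectors $\alpha_i,\beta_i$. The only cosmetic difference is that you cite Proposition \ref{prop:F1KernelDeltaITrivial} (applied over $\Q$) where the paper simply writes out that same computation inline.
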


\begin{proof}
To prove that the group in \eqref{eq:pFInCohomology} is finite, it suffices to show that 
\begin{equation*}
    (\delta_{f}-I)_{\Q}(L_{\Q}) \cap \pF = \pF.
\end{equation*}
The basis $\alpha_i,\beta_j$ of $H$ induces coordinates on $L\cong \Z^{\binom{2g}{3}}$, and each $F_2^{V_{IJ}}(L)$ is a coordinate subspace of $L$ in this basis. Therefore their intersection $\pF$ is also a coordinate subspace. It is generated by 
\begin{enumerate}
    \item $\abb{i}{j}{k}$ for $j,k > g-d$;
    \item $\bbb{i}{j}{k}$ for $j,k > g-d$;
    \item $\abb{i}{i}{k}$ for $i\leq g-d$ and $k > g-d$. 
\end{enumerate}
The simple wedges in (1) and (2) are already in $F_2L$, so they are in the image of $(\delta_{f}-I)_{\Q}$ by Proposition \ref{prop:AdeltaBdeltaFinite}. For (3), let $x\in H_{\Q}$ satisfy $(\delta_{f}-I)_{\Q}(x) = \beta_k$. Since $(\delta_{f}-I)(\alpha_i) = 0$ and $(\delta_{f}-I)(\beta_i) = 0$ for $i\leq g-d$, we have  $(\delta_{f}-I)_{\Q}(\alpha_i \wedge \beta_i \wedge x) = \abb{i}{i}{k}$. So $(\delta_{f}-I)_{\Q}(L_{\Q}) \cap \pF = \pF$, as required. 
\end{proof}

\begin{proof}[Proof of Theorem \ref{thm:nonLagrangianTorsion}]
The class $[f,\tau]$ lies in $\pF$ by Lemma \ref{lem:tauInVIJ} and so $J([f,\tau])$ lies in the group in \eqref{eq:pFInCohomology}. This group is finite by Lemma \ref{lem:pFFinite}, and therefore $\mu(f)$ is torsion. 
\end{proof}

\subsection{The closed surface case}
\label{sec:ceresaClosedSurface}

Let $\Lambda$ be a collection of pairwise nonintersecting nontrivial isotopy classes of simple closed curves on $\Sigma_g$, and let $f = \prod_{\ell\in \Lambda} T_{\ell}^{c_{\ell}}$ be a positive multitwist. The natural map $\Gamma_g^1 \to \Gamma_g$ is surjective, so we may view $f$ as a positive multitwist on $\Sigma_g^1$. Also, the natural map $H^1(\langle \delta_{f}\rangle , L) \to H^1(\langle \delta_{f}\rangle , L/H)$ takes $\mu(f)$ to $\nu(f)$, so the torsionness of $\nu(f)$ follows from the torsionness of $\mu(f)$. This completes the proof that the Ceresa class of a multitwist on a closed surface is torsion.

The map $H^1(\langle \delta_{f}\rangle, L) \to H^1(\langle \delta_{f}\rangle, L/H)$ also induces a surjection $A(\delta_{f}) \to \overline{A(\delta_{f})}$ as studied in Proposition \ref{prop:AtoAbar}, so when $\Lambda$ is Lagrangian, the class $\nu(f)$ lies in $\overline{A(\delta_{f})}$. The image of $w_{f}$ under $B(\delta_{f}) \to \overline{B(\delta_{f})}$ is the image of $\nu(f)$ under $\overline{A(\delta_{f})} \to \overline{B(\delta_{f})}$, which we denote by $v_{f}$. By Proposition \ref{prop:muBdelta}, we have 
\begin{equation}
\label{eq:nuBdelta}
    v_{f} = \sum_{\ell\in \Lambda} c_{\ell} \cdot J([T_{\ell},\tau]).
\end{equation}

\section{Examples}
\label{sec:examples}

\begin{figure}[tbh]
	\centering
	\includegraphics[height=30mm]{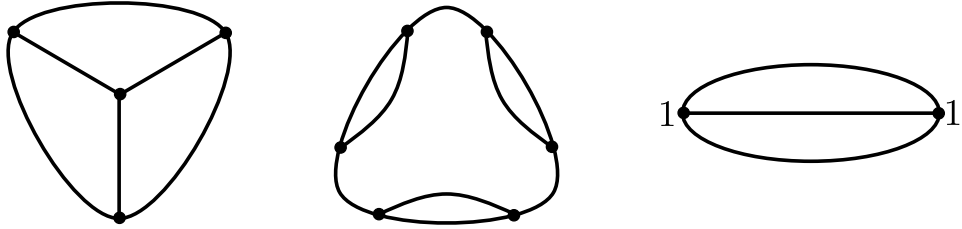}
	\caption{From left to right, these are the graphs $K_4$, $TL_3$. The vertices of the theta graph each have weight 1} \label{fig:graphs}
\end{figure}

\noindent In this section, we will compute the Ceresa class for the multitwist $T_{\Gamma}$ for tropical curves whose underlying vertex-weighted graph is displayed in Figure \ref{fig:graphs}. 
In the first two examples, we will consider tropical curves $\Gamma$ whose vertex-weights are all 0 (see the left and middle graphs in Figure \ref{fig:graphs}). Let us describe our strategy for determining Ceresa nontriviality in this case. 

Let $\Gamma$ be a genus $g\geq 3$ tropical curve with integral edge-lengths $c_1,\ldots,c_N$ and whose vertex weights are all 0.  Let $\ell_{1},\ldots,\ell_{N}$ be the collection of loops on $\Sigma_g$ corresponding to $\Gamma$ as in \S\ref{subsec:tropicalCurves}.  As discussed in \S\ref{sec:ceresaClosedSurface}, the class $\nu(T_{\Gamma})$ lies in $\overline{A(\delta_{\Gamma})}$, and its image in $\overline{B(\delta_{\Gamma})}$, denoted by $v_{\Gamma}$, is described by Equation \eqref{eq:nuBdelta}. So  
 to compute $v_{\Gamma}$, it suffices to compute each $J([T_{\ell_i},\tau])$  for some hyperelliptic involution $\tau$. While the choice of $\tau$ does not matter, it is essential that we use the same $\tau$ to compute each  $J([T_{\ell_i},\tau])$. Now, observe  that $[T_{\ell_i}, \tau] = T_{\ell_i}T_{\tau(\ell_i)}^{-1}$ and $\ell_i$ is homologous to $\tau(\ell_i)$. Three things may happen.
\begin{itemize}
    \item[-] If $\ell_i$ is a separating curve, then so is $\tau(\ell_i)$, and hence $J(T_{\ell_i}T_{\tau(\ell_i)}^{-1}) = 0$.
    \item[-] If $\ell_i$ is a non-separating curve that does not intersect $\tau(\ell_i)$, then we may use Formula  \eqref{eq:johnsonBPM} to compute $J([T_{\ell_i},\tau])$.
    \item[-] If $\ell_i$ is a non-separating curve that intersects $\tau(\ell_i)$, we may find a sequence of loops $\ell_i = \gamma_0,\gamma_1,\ldots,\gamma_k = \tau(\ell_i)$ so that $\gamma_j$ does not intersect $\gamma_{j+1}$, and compute each $J(T_{\gamma_j}T_{\gamma_{j+1}}^{-1})$ using Formula \eqref{eq:johnsonBPM}. Such a sequence must exist for $g\geq 3$ by \cite[Theorem~1.9]{Putman}. 
\end{itemize}
With an explicit formula for $v_{\Gamma}$ in hand, let us show how to determine if it represents the trivial element of  $\overline{B(\delta_{\Gamma})}$. Recall that $\overline{B(\delta_{\Gamma})}$ is the cokernel of the map 
\begin{equation*}
    \delta_{\Gamma}-I:\gr_{1}^F(\LH) \ra \gr_{2}^F(\LH)
\end{equation*}
Because the vertex weights of $\Gamma$ are all 0, the polarization $Q_{\Gamma}$, viewed as a map $H/Y \to Y$, is nonsingular, and  the map $\delta_{\Gamma}-I$ above is invertible after tensoring with $\Q$. An explicit inverse is 
\begin{equation}
\label{eq:deltaMinusIInverse}
(\delta_{\Gamma}-I)_{\Q}^{-1}(y\wedge y' \wedge h) = \tfrac{1}{2}\left(Q_{\Gamma}^{-1}y \wedge y' \wedge h + y\wedge Q_{\Gamma}^{-1}y'  \wedge h - Q_{\Gamma}^{-1}y\wedge Q_{\Gamma}^{-1}y' \wedge Q_{\Gamma} h  \right) 
\end{equation}  
for $y,y'\in Y$ and $h\in H$. Thus, the class $v_{\Gamma}$ is trivial if and only if $u_{\Gamma} = (\delta_{\Gamma}-I)_{\Q}^{-1}(v_{\Gamma})$ is integral, i.e., lies in $\gr_1^F(L/H)$. Thus, determining triviality of $v_{\Gamma}$ is almost as simple as computing the coordinates of $u_{\Gamma}$ in $\gr_2^F(L)_{\Q}$ using a basis of $H$ and seeing if they are integral, yet we still need to account for taking the quotient by $H$.  Recall from Equation \eqref{eq:omegaV} that $\omega = \alpha_1\wedge\beta_1 + \cdots + \alpha_g\wedge \beta_g$ for any symplectic basis $\alpha_1,\ldots,\alpha_g,\beta_1,\ldots,\beta_g$ of $H$. Therefore, any two representatives in $L$ of an element in $\LH$ differ only in coordinates of the form $\aab{i}{j}{i}$ or $\abb{i}{i}{j}$. In conclusion, we have the following way to determine nontriviality of the Ceresa class.

\begin{proposition}
\label{prop:ceresaNontrivial}
Suppose that the vertex weights of $\Gamma$ are all $0$. The Ceresa class $\nu(T_{\Gamma})$ is nontrivial if  $u_{\Gamma}$ has a coordinate of the form $\alpha_i\wedge\alpha_j\wedge \beta_k$, where $i,j,k$ are distinct, whose coefficient is not integral.
\end{proposition}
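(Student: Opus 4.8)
The plan is to argue by contraposition, exploiting the chain of implications already assembled in the text. Since $v_{\Gamma}$ is the image of $\nu(T_{\Gamma})$ under the surjection $\overline{A(\delta_{\Gamma})} \to \overline{B(\delta_{\Gamma})}$ of \S\ref{sec:ceresaClosedSurface}, it suffices to show that a non-integral distinct-index coordinate of $u_{\Gamma}$ forces $v_{\Gamma} \neq 0$ in $\overline{B(\delta_{\Gamma})}$. By the discussion preceding the proposition, $v_{\Gamma} = 0$ if and only if $u_{\Gamma} = (\delta_{\Gamma}-I)_{\Q}^{-1}(v_{\Gamma})$ lies in the integral lattice $\gr_1^F(\LH)$. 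So the whole statement reduces to the assertion: if some coordinate $\aab{i}{j}{k}$ of $u_{\Gamma}$ with $i,j,k$ distinct is non-integral, then $u_{\Gamma} \notin \gr_1^F(\LH)$.

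The heart of the argument is to pin down which coordinate functionals on $\gr_1^F L = F_1L/F_2L$ descend to well-defined, integer-valued functionals on $\gr_1^F(\LH) = F_1L/(F_2L + H)$. First I would record the monomial basis of $\gr_1^F L$: since $Y = \Span\{\beta_1,\ldots,\beta_g\}$ and $F_qL = (\wedge^qY)\wedge(\wedge^{3-q}H)$, the wedges $\aab{i}{j}{k}$ with $i<j$ and arbitrary $k$ (those basis monomials with exactly one factor from $Y$) form a $\Z$-basis of $\gr_1^F L$. Next I would compute the image of $H$ in $\gr_1^F L$ under $h \mapsto \omega \wedge h$ with $\omega = \sum_m \alpha_m \wedge \beta_m$: for $h = \beta_m$ every term of $\omega \wedge \beta_m$ carries two $\beta$-factors, so $\omega \wedge \beta_m \in F_2L$ and its image vanishes, while for $h = \alpha_m$ one gets $\omega \wedge \alpha_m \equiv -\sum_{i\neq m} \aab{i}{m}{i} \bmod F_2L$. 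Hence the image of $H$ in $\gr_1^F L$ is spanned by vectors supported only on coordinates $\aab{i}{j}{k}$ with $k\in\{i,j\}$.

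Because $\gr_1^F(\LH) = \gr_1^F L/(\text{image of } H)$, it follows that every coordinate functional $\aab{i}{j}{k}$ with $i,j,k$ distinct annihilates the image of $H$, so it descends to $\gr_1^F(\LH)$ and takes integer values on the integral lattice. This is exactly the remark preceding the proposition, that two lifts to $L$ of a class in $\LH$ differ only in coordinates of type $\aab{i}{j}{i}$ and $\abb{i}{i}{j}$; quotienting by $F_2L$ leaves the single-$\beta$ coordinates untouched. Consequently, if the $\aab{i}{j}{k}$-coordinate of $u_{\Gamma}$ (distinct indices) is non-integral, then no representative of $u_{\Gamma}$ in $L_{\Q}$ is integral, so $u_{\Gamma}\notin \gr_1^F(\LH)$, whence $v_{\Gamma}\neq 0$ and $\nu(T_{\Gamma})\neq 0$.

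The only genuinely delicate point is the index bookkeeping in the middle paragraph: one must check carefully both that the $H$-ambiguity is confined to coordinates with a repeated index and that passing from $F_1L$ to $\gr_1^F L = F_1L/F_2L$ does not disturb the single-$\beta$ coordinates. Once this accounting is confirmed, the conclusion is immediate from the equivalences already established, and I anticipate no further obstacle.
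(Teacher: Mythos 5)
Your proposal is correct and follows essentially the same route as the paper: the paper's (implicit) proof consists precisely of the reduction to ``$v_{\Gamma}=0$ iff $u_{\Gamma}$ is integral'' via the explicit inverse of $(\delta_{\Gamma}-I)_{\Q}$, together with the observation that the ambiguity from quotienting by $H$ (i.e.\ by $\omega\wedge H$) is supported only on coordinates with a repeated index, so a non-integral $\alpha_i\wedge\alpha_j\wedge\beta_k$-coordinate with $i,j,k$ distinct certifies $u_{\Gamma}\notin \gr_1^F(\LH)$. Your middle paragraph simply makes explicit, at the level of $\gr_1^F$, the computation the paper states as a remark (that $\omega\wedge\beta_m\in F_2L$ and $\omega\wedge\alpha_m$ is supported on repeated-index coordinates), which is a sound and faithful filling-in of the same argument.
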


We will now illustrate this analysis in some examples.


\begin{example}
\label{ex:K4}
Suppose $\Gamma$ is a tropical curve whose underlying vertex-weighted  graph is $K_4$ (the left graph in  Figure \ref{fig:graphs}). Then $\Gamma$ is Ceresa nontrivial. 
\end{example}

By Proposition \ref{prop:ceresaRealEdgelengths}, it suffices to show that $\nu(T_{\Gamma}) \neq 0$ whenever $\Gamma$ has integral edge lengths.   Let $\ell_1,\ldots,\ell_6$ be configuration of essential closed curves illustrated in Figure  \ref{fig:K4surface}. Because the vertex weights of $\Gamma$ are all 0, this arrangement of curves is Lagrangian, and hence $\nu(T_{\Gamma}) \in \overline{A(\delta_{\Gamma})}$. Let $v_{\Gamma}$ denote the projection  of $\nu(T_{\Gamma})$ to $\overline{B(\delta_{\Gamma})}$. 
Let $\tau$ be the hyperelliptic involution given by a rotation of $180^{\circ}$ through the axis horizontal to the page. Observe that  $[T_{\ell_i}, \tau] = 1$ for $i=1,3,4,6$. The remaining two commutators $[T_{\ell_2},\tau] = T_{\ell_2} T_{\tau(\ell_2)}^{-1}$ and $[T_{\ell_5},\tau] = T_{\ell_5} T_{\tau(\ell_5)}^{-1}$ are bounding pair maps, so we may compute their images under $J$ using Equation \eqref{eq:johnsonBPM}. With respect to the basis of $H=H_1(\Sigma_3,\Z)$ in Figure \ref{fig:basisJohnsonHom}, 
these are
\begin{align*}
&J([T_{\ell_2},\tau]) =  \abb{1}{1}{2} \\
&J([T_{\ell_5},\tau]) =  -\abb{2}{1}{2} - \abb{2}{2}{3} + \abb{2}{1}{3}.
\end{align*}
By Equation \ref{eq:nuBdelta}, the class $v_{\Gamma}$ is
\begin{equation}
\label{eqn:muK4}
v_{\Gamma} = c_2 \cdot \abb{1}{1}{2} + c_5\cdot (-\abb{2}{1}{2} - \abb{2}{2}{3} + \abb{2}{1}{3}).
\end{equation}
Next we compute  $u_{\Gamma} = (\delta_{\Gamma}-I)_{\Q}^{-1}(v_{\Gamma})$
using Equation \ref{eq:deltaMinusIInverse}:
\begin{align*}
    \det(Q_{\Gamma})&\, u_{\Gamma} = \\
    &-(c_1c_2c_3 + c_1c_2c_4 + c_1c_2c_5 + c_2c_3c_5 + c_2c_4c_5 + c_2c_3c_6 + c_2c_4c_6 + c_2c_5c_6 \\
    &\hspace{10pt} + c_3c_5c_6) \, \cdot \aab{1}{2}{1} \\
    & -c_2c_4(c_1 + c_5 + c_6) \, \cdot \aab{1}{3}{1} + c_1c_5c_6\, \cdot \aab{2}{3}{1} \\
    &+ (c_2c_3c_5 + c_2c_4c_5 + c_3c_4c_5 + c_2c_3c_6 + c_2c_4c_6 + c_2c_5c_6 + c_3c_5c_6)\, \cdot \aab{1}{2}{2} \\
    &+c_2c_4c_6\, \cdot \aab{1}{3}{2} +c_1c_5(c_2 + c_4 + c_6)\, \cdot \aab{2}{3}{2} \\
    &- c_3c_4c_5\, \cdot \aab{1}{2}{3} + c_2c_4c_5\, \cdot \aab{1}{3}{1} - c_1c_4c_5\,\cdot \aab{2}{3}{3},
\end{align*}
where
\begin{equation}
\label{eq:QK4}
    Q_{\Gamma} = \begin{pmatrix}
    c_1 + c_5 + c_6 & -c_6 & -c_5 \\
    -c_6 & c_2+c_4+c_6 & -c_4 \\
    -c_5 & -c_4 & c_3+c_4+c_5
    \end{pmatrix}.
\end{equation}
By Equation \eqref{eq:SymanzikPolynomial}, $\det(Q_{\Gamma})$ is the first Symanzik polynomial of $G$. Observe that the absolute value of the coordinates of $\det(Q_{\Gamma})u_{\Gamma}$ consist of a sum of monomials of the form $c_T$ for spanning trees $T$, each appearing with coefficient  $1$. Thus 
for any positive value of the $c_i$'s, each coordinate of $u_{\Gamma}$ has absolute value strictly between $0$ and $1$. By Proposition \ref{prop:ceresaNontrivial}, the Ceresa class $\nu(T_{\Gamma})$ is nontrivial. 

\begin{figure}[tbh]
	\centering
	\includegraphics[height=25mm]{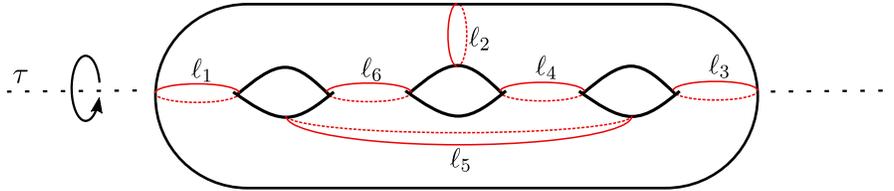}
	\caption{An arrangement of simple closed curves on $\Sigma_3$ dual to $K_4$} \label{fig:K4surface}
\end{figure}

\begin{remark} \label{rem:zharkov}
This is an opportune moment to remark on the relation between the definition of tropical Ceresa class in the present paper and the definition given by Zharkov in \cite{zharkov}.  Consider the element $w_\Gamma = (\delta_\Gamma-1) v_\Gamma$, which lies in $\gr_3^F(L/H)$.  If $v_\Gamma$ lies in $(\delta_\Gamma-1) \gr_1^F(L/H)$, then certainly $w_\Gamma$ lies in $(\delta_\Gamma-1)^2 \gr_1^F(L/H)$.  So the Ceresa class maps to 
\begin{equation*}
\gr_3^F(L/H) / (\delta_\Gamma-1)^2 \gr_1^F(L/H).
\end{equation*}

\noindent Using the expression for $v_\Gamma$ in Equation \eqref{eqn:muK4}, we see that

\begin{equation*}
w_\Gamma = -2c_2c_5 \cdot \bbb{1}{2}{3}. 
\end{equation*}

The group  $(\delta_\Gamma-I)^2\gr_1^F(L/H)$ is generated by $(\delta_\Gamma-I)^2(\aab{i}{j}{k})$ for all $i,j,k \in \{1,2,3\}$ with $i<j$. Because
\begin{equation*} 
(\delta_\Gamma-I)^2(\aab{i}{j}{k}) = 2\,Q\alpha_i \wedge Q\alpha_j \wedge \beta_k = 2\left|\begin{array}{cc}q_{si} & q_{sj}\\ q_{ti} & q_{tj} \end{array}\right| \bbb{1}{2}{3}
\end{equation*}
where $(s, t, k)$ is an even permutation of $(1,2,3)$, 
we see that $(\delta_\Gamma-I)^2\gr^F_1 (L/H)$ is generated by 2 times the $2\times 2$ minors of the symmetric matrix $Q_{\Gamma}$ from Equation \eqref{eq:QK4}. Thus, this subgroup is generated by
\begin{equation*}
\begin{array}{cc}
2(c_1c_4-c_2c_5), & 2(c_1c_4-c_3c_6),     \\
2(c_2c_5+c_4c_5+c_4c_6+c_5c_6), & 2(c_2c_5+c_1c_2+c_1c_6+c_2c_6), \\
2(c_2c_5+c_1c_3+c_1c_5+c_3c_5), & 2(c_2c_5+c_2c_3+c_2c_4+c_3c_4).
\end{array}
\end{equation*}
So the Ceresa class of $T_\Gamma$ is nontrivial whenever $-2c_2c_5$ does not lie in the subgroup of $\Z$ generated by the six integers above.  This is precisely the condition Zharkov computes in \cite[\S~3.2]{zharkov} for the algebraic nontriviality of his Ceresa cycle for a tropical curve with underlying graph $K_4$.  It remains to be understood whether this relation between our tropical Ceresa class and Zharkov's holds in general.

\end{remark}

\begin{remark}
We observe that the element $u_{\Gamma} = (\delta_{\Gamma}-I)_{\Q}^{-1}(v_{\Gamma})$ is a point in a $6$-dimensional torus (identified by our choice of basis here with $(\R/\Z)^6$) which is zero if and only if the Ceresa class vanishes.  What's more, $u_\Gamma$ does not change when the edge lengths are scaled simultaneously, since both $\delta_{\Gamma}-1$ and $v_\Gamma$ are homogeneous of degree $1$ in the edge lengths.  The space $M(K_4)$ of tropical curves with underlying graph $K_4$ is a positive orthant in $\R^6$, or more precisely the quotient of this orthant by the action of the automorphism group $S_4$ (see \cite{chanlectures} for a full description) and the class $u_\Gamma$ can be thought of as a map from the projectivization of this orthant to the $6$-torus.  The content of Example~\ref{ex:K4} is then that the image of this map does not include $0$.  It would be interesting to understand whether this map can be naturally extended to the whole tropical moduli space of genus $3$ curves.
\end{remark}

\begin{remark}
\label{rmk:K4All1}
Consider the case $c_1 = \cdots = c_6 = 1$. The invariant factors of $Q_{\Gamma}$ are $q_1=1$, $q_2=4$, and $q_3=4$, and hence the projection $\overline{A(\delta_{\Gamma})} \to \overline{B(\delta_{\Gamma})}$ is an isomorphism. We compute $v_{\Gamma}$ and $u_{\Gamma}$ to be
\begin{align*}
    v_{\Gamma} =& \abb{1}{1}{2} -\abb{2}{1}{2} - \abb{2}{2}{3} + \abb{2}{1}{3},\\
    u_{\Gamma} =& \frac{1}{16}(-9\, \aab{1}{2}{1} -3 \,  \aab{1}{3}{1} + \, \aab{2}{3}{1} + 7\, \cdot \aab{1}{2}{2} +\, \aab{1}{3}{2} \\
    &+ 3\,  \aab{2}{3}{2} - \, \aab{1}{2}{3} + \, \aab{1}{3}{1} - \, \aab{2}{3}{3}).
\end{align*}
From this, we see that $v_{\Gamma}$ has order 16 in $\overline{B(\delta_{\Gamma})}$, and therefore  the Ceresa class $\nu(T_{\Gamma})$ also has order 16 in $\overline{A(\delta_{\Gamma})}$.  
\end{remark}

There were two key features in this example: each $[T_{\ell_i},\tau]$ was either trivial or a bounding pair map, and each coordinate of $(\delta_{\Gamma}-I)_{\Q}^{-1}(\nu(T_{\Gamma}))$ had absolute value strictly between 0 and 1. In the next example, neither of these properties will hold.

\begin{example}
\label{ex:TL3}
Suppose $\Gamma$ is a tropical curve whose underlying vertex-weighted  graph is $TL_3$ (the middle graph in  Figure \ref{fig:graphs}). Then $\Gamma$ is Ceresa nontrivial. 
\end{example}
As in the previous example, it suffices to show that $\nu(T_{\Gamma})$ is nontrivial whenever $\Gamma$ has integral edge lengths. Let $v_{\Gamma}$ denote the image of $\nu(T_{\Gamma})$ in $\overline{B(\delta_{\Gamma})}$.  Let $\ell_1,\ldots,\ell_9$ be the configuration of essential closed curves in Figure \ref{fig:L3surface}, and choose  $\tau$ to be rotation by $180^{\circ}$ through the axis horizontal to the page.  
To compute each $J([T_{\ell_i},\tau])$, we will use the symplectic basis of $H=H_1(\Sigma_4,\Z)$ displayed on the right in Figure \ref{fig:L3surface}; this yields a much nicer expression for $u_{\Gamma} = (\delta_{\Gamma}-I)_{\Q}^{-1}(v_{\Gamma})$.     Clearly, $[T_{\ell_i},\tau] = 1$ for $i=1,2,3,4$.  Next, $[T_{\ell_i},\tau] = T_{\ell_i}T_{\tau(\ell_i)}^{-1}$ are bounding pair maps when $i=5,6,7,9$, hence $J([T_{\ell_i},\tau])$ may be computed using Formula \eqref{eq:johnsonBPM}. The only remaining loop is $\ell_8$, which intersects $\tau(\ell_8)$. However, $T_{\ell_8}T_{\tau(\ell_8)}^{-1} = (T_{\ell_8}T_{\ell_9}^{-1})(T_{\ell_9}T_{\tau(\ell_8)}^{-1})$ expresses $T_{\ell_8}T_{\tau(\ell_8)}^{-1}$ as a product of bounding pair maps, which we may use to compute $J([T_{\ell_8},\tau])$. Then
\begin{align*}
v_{\Gamma}= &-c_6 \cdot \abb{2}{1}{2} + -c_5\cdot \abb{1}{1}{2} \\
&+ (-c_5-c_7+c_8-c_9)( \abb{1}{1}{3} + \abb{1}{1}{4}) \\
&+(c_6-c_7+c_8+c_9)(\abb{2}{2}{3} + \abb{2}{2}{4}).
\end{align*}
Next we compute  $u_{\Gamma}$
using Equation \ref{eq:deltaMinusIInverse}:
{\small
\begin{align*}
&\det(Q_{\Gamma})\, u_{\Gamma}  = \\
&c_5(c_1c_3c_4 + c_1c_3c_6 + c_1c_4c_6 + c_3c_4c_6 + 2c_1c_3c_8 + 2c_1c_4c_8 + 2c_3c_6c_8 + 2c_4c_6c_8) \cdot \aab{1}{2}{1} \\
&c_4(c_1 + c_6)(c_2c_5 - c_2c_8 - c_5c_8 + c_2c_9 + c_5c_9 + c_2c_7 + c_5c_7) \cdot \aab{1}{3}{1} \\
&c_3(c_1 + c_6)(c_2c_5 - c_2c_8 - c_5c_8 + c_2c_9 + c_5c_9 + c_2c_7 + c_5c_7) \cdot \aab{1}{4}{1} \\
&c_6(c_2c_3c_4 + c_2c_3c_5 + c_2c_4c_5 + c_3c_4c_5 + 2c_2c_3c_7 + 2c_2c_4c_7 + 2c_3c_5c_7 + 2c_4c_5c_7) \cdot \aab{1}{2}{2} \\
&-c_4(c_2 + c_5)(c_1c_6 + c_1c_8 + c_6c_8 + c_1c_9 + c_6c_9 - c_1c_7 - c_6c_7) \cdot \aab{2}{3}{2} \\
&-c_3(c_2 + c_5)(c_1c_6 + c_1c_8 + c_6c_8 + c_1c_9 + c_6c_9 - c_1c_7 - c_6c_7) \cdot \aab{2}{4}{2} \\
&c_5c_6(c_3c_4 + c_3c_8 + c_4c_8 - c_3c_9 - c_4c_9 + c_3c_7 + c_4c_7) \cdot \aab{1}{2}{3} \\
&c_4c_6(c_2c_5 - c_2c_8 - c_5c_8 + c_2c_9 + c_5c_9 + c_2c_7 + c_5c_7) \cdot \aab{1}{3}{3} \\
&c_3c_6(c_2c_5 - c_2c_8 - c_5c_8 + c_2c_9 + c_5c_9 + c_2c_7 + c_5c_7) \cdot \aab{1}{4}{3} \\
&-c_4c_5(c_1c_6 + c_1c_8 + c_6c_8 + c_1c_9 + c_6c_9 - c_1c_7 - c_6c_7) \cdot \aab{2}{3}{3} \\
&-c_3c_5(c_1c_6 + c_1c_8 + c_6c_8 + c_1c_9 + c_6c_9 - c_1c_7 - c_6c_7) \cdot \aab{2}{4}{3} \\
&c_5c_6(c_3c_4 + c_3c_8 + c_4c_8 - c_3c_9 - c_4c_9 + c_3c_7 + c_4c_7) \cdot \aab{1}{2}{4} \\
&c_4c_6(c_2c_5 - c_2c_8 - c_5c_8 + c_2c_9 + c_5c_9 + c_2c_7 + c_5c_7) \cdot \aab{1}{3}{4} \\
&c_3c_6(c_2c_5 - c_2c_8 - c_5c_8 + c_2c_9 + c_5c_9 + c_2c_7 + c_5c_7) \cdot \aab{1}{4}{4} \\
&-c_4c_5(c_1c_6 + c_1c_8 + c_6c_8 + c_1c_9 + c_6c_9 - c_1c_7 - c_6c_7) \cdot \aab{2}{3}{4} \\
&-c_3c_5(c_1c_6 + c_1c_8 + c_6c_8 + c_1c_9 + c_6c_9 - c_1c_7 - c_6c_7) \cdot \aab{2}{4}{4},
\end{align*}
}
where 
\begin{equation*}
    Q_{\Gamma} = \begin{pmatrix}
    c_1 +  c_6 & 0 & c_6 & c_6 \\
    0 & c_2+c_5 & c_5 & c_5 \\
    c_6 & c_5 & c_3+c_5+c_6+c_7+c_8+c_9 & c_5+c_6+c_7+c_8+c_9 \\
    c_6 & c_5 & c_5+c_6+c_7+c_8+c_9 & c_4+c_5+c_6+c_7+c_8+c_9
    \end{pmatrix}.
\end{equation*}
By Equation \eqref{eq:SymanzikPolynomial}, $\det(Q_{\Gamma})$ is the first Symanzik polynomial of $G$. The coordinates of $\det(Q_{\Gamma})u_{\Gamma}$ of the form $\aab{i}{j}{k}$, for $i,j,k$ distinct, are a sum of monomials $c_T$ for spanning trees $T$, each appearing with coefficient  $\pm 1$. 
Thus for any positive value of the $c_i$'s, each coordinate of these coordinates is strictly between $-1$ and $1$. If they are all equal to $0$, then
\begin{align*}
(c_1 + c_6)c_7 = c_1c_6 + c_1c_8 + c_6c_8 + c_1c_9 + c_6c_9, \\
(c_2 + c_5)c_8 = c_2c_5 + c_2c_9 + c_5c_9 + c_2c_7 + c_5c_7, \\
(c_3 + c_4)c_9 = c_3c_4 + c_3c_8 + c_4c_8 + c_3c_7 + c_4c_7. 
\end{align*}
Solving for $c_7$ in the first equation and substituting this expression in the second equation yields
\begin{equation*}
c_1c_2c_5 + c_1c_2c_6 + c_1c_5c_6 + c_2c_5c_6 + 2c_1c_2c_9 + 2c_1c_5c_9 + 2c_2c_6c_9 + 2c_5c_6c_9 = 0
\end{equation*}
which cannot happen if every $c_i$ is positive. Therefore, the Ceresa class $\nu(T_{\Gamma})$ is nontrivial by Proposition \ref{prop:ceresaNontrivial}.

\begin{figure}
\centering
\begin{minipage}{.5\textwidth}
  \centering
  \includegraphics[width=0.9\linewidth]{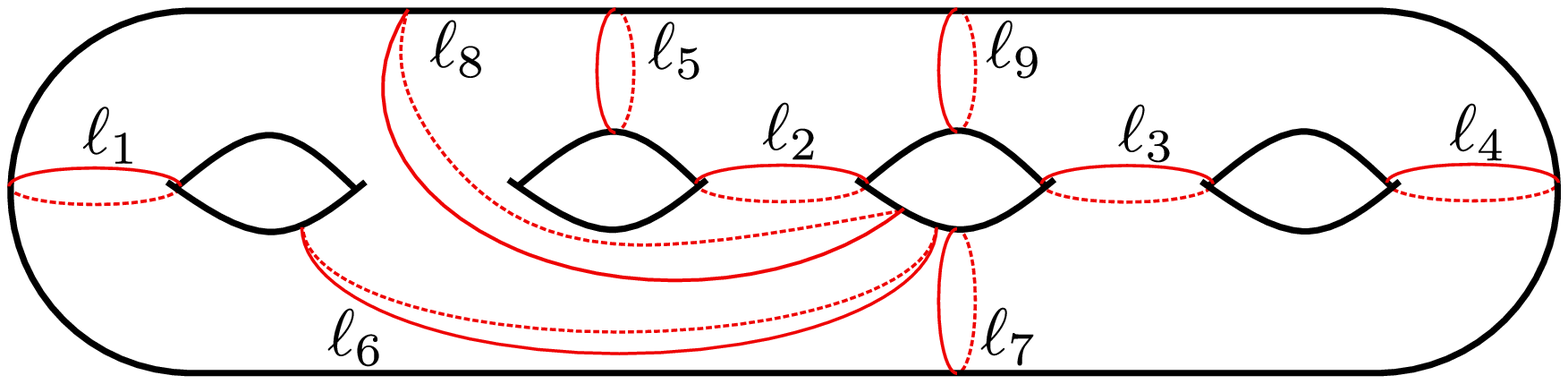}
\end{minipage}%
\begin{minipage}{.5\textwidth}
  \centering
  \includegraphics[width=0.9\linewidth]{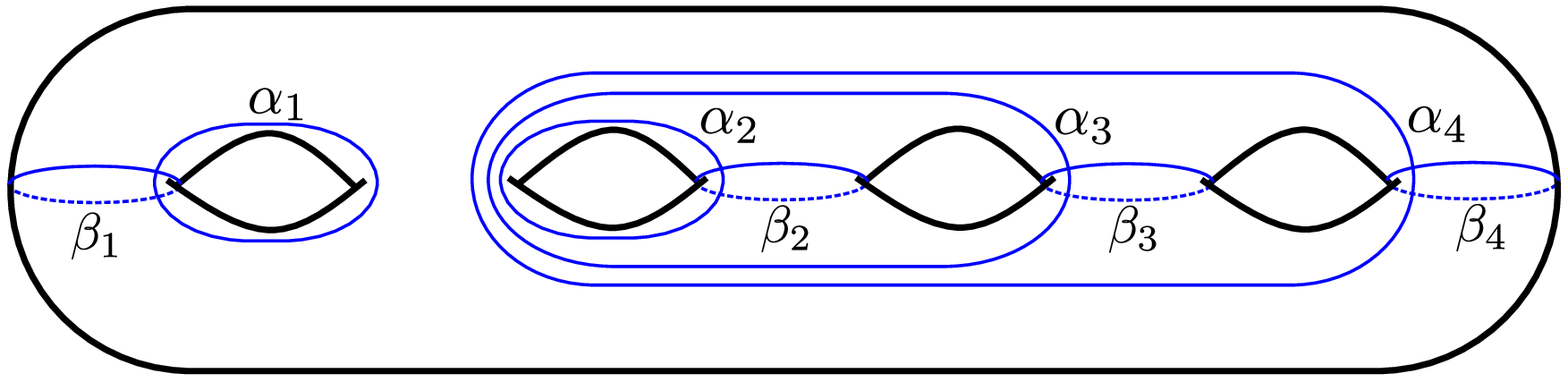}
\end{minipage}
\caption{Left: an arrangement of simple closed curves on $\Sigma_4$ dual to $TL_3$. Right: the basis we use to compute $\nu(T_{\Gamma})$, oriented so that $\hat{i}(\alpha_i,\beta_i) = 1$} \label{fig:L3surface}
\end{figure}

In the previous two examples, each collection of curves is Lagrangian, and hence $\nu(T_{\Gamma})$ lies in $\overline{A(\delta_{\Gamma})}$. However, the Ceresa class $\nu(T_{\Gamma})$ may not live in $\overline{A(\delta_{\Gamma})}$, as we shall see in the following example. 

\begin{example}
\label{example:thetaGraph}
Let $\Gamma$ be a tropical curve whose underlying graph is a theta graph. Suppose the two vertices each have weight 1, and each edge has length 1, see the right graph in Figure \ref{fig:graphs}.
Then $\nu(T_{\Gamma})\notin \overline{A(\delta_{\Gamma})}$, but $3 \, \nu(T_{\Gamma}) \in \overline{A(\delta_{\Gamma})}$. In particular $\Gamma$ is Ceresa nontrivial.
\end{example}

Let $\ell_1,\ell_2,\ell_3$ be the  configuration of essential closed curves illustrated in Figure \ref{fig:ExampleNotInF2}.
Consider the following  basis of $H_1(\Sigma_4,\Z)$, written in terms of the basis in Figure \ref{fig:basisJohnsonHom}: 
\begin{equation*}
\alpha_2' = -\alpha_2,\; \alpha_3' = 2\alpha_2 + \alpha_3,\; \beta_2' = \beta_3-2\beta_2,\; \beta_3' = \beta_2
\end{equation*}
and  $\alpha_i' = \alpha_i$, $\beta_i' = \beta_i$ for $i=1,4$. On this basis, $(\delta_{\Gamma}-I)(\alpha_2') = \beta_2'$, 
$(\delta_{\Gamma}-I)(\alpha_3') = 3\beta_3'$, and $(\delta_{\Gamma}-I)(\alpha_i')$, $(\delta_{\Gamma}-I)(\beta_i')=0$ otherwise.  So  $Y' = \Span_{\Z}\{\beta_2', 3\beta_3' \}$, and $Y = \Span_{\Z}\{\beta_2', \beta_3' \}$. Next, we compute

\begin{align*}
J([T_{\Gamma},\tau]) &= J([T_{\ell_1},\tau]) + T_{\ell_1} \cdot  J([T_{\ell_2},\tau]) + T_{\ell_1}T_{\ell_2}\cdot J([T_{\ell_3},\tau])\\
&=\alpha_1'\wedge \beta_1' \wedge  \beta_2' + \alpha_1'\wedge \beta_1' \wedge \beta_3' +  \alpha_2' \wedge \beta_2'\wedge \beta_3'.
\end{align*}
Clearly, $\alpha_2' \wedge \beta_2'\wedge \beta_3' \in F_2L$,  and $\alpha_1'\wedge \beta_1' \wedge \beta_2' \in (\delta_{\Gamma}-I)(L)$ by Proposition \ref{prop:F1KernelDeltaITrivial}. However, the simple wedge $\alpha_1'\wedge \beta_1' \wedge \beta_3' $ is not contained in $F_2L + (\delta_{\Gamma}-I)L+H$, so $\nu(T_{\Gamma})\notin \overline{A(\delta_{\Gamma})}$. Nevertheless, the class $3\,\nu(T_{\Gamma})$ lies in $\overline{A(\delta_{\Gamma})}$ because $Y'\leq 3\, Y$.

\begin{figure}[tbh]
	\centering
	\includegraphics[height=30mm]{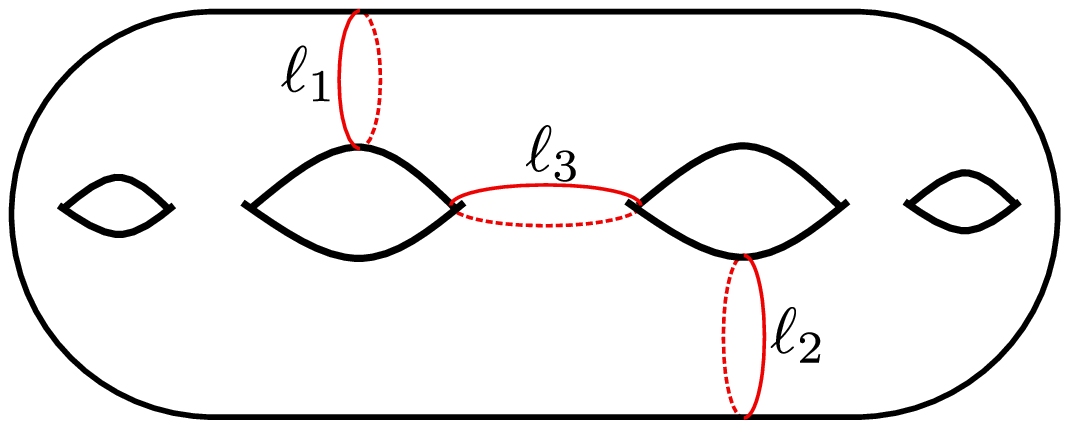} 
	\caption{An arrangement of simple closed curves on $\Sigma_4$ dual to a theta graph with a weight on each vertex} \label{fig:ExampleNotInF2}
\end{figure}

\begin{remark}
The $\ell$-adic Ceresa class of a hyperelliptic algebraic  curve is trivial, which we may interpret as a property of hyperelliptic Jacobians via the Torelli theorem. Nevertheless, the tropical analog of this does not hold because the property of being hyperelliptic cannot be determined by the Jacobian alone. A tropical curve whose Jacobian is isomorphic to the Jacobian of a hyperelliptic tropical curve, as polarized tropical abelian varieties, is said to be of \textit{hyperelliptic type}. By \cite[Theorem~1.1]{Corey}, the tropical curve from Example \ref{example:thetaGraph} is of hyperelliptic type, yet it is Ceresa nontrivial. 
Thus, the Ceresa class for a tropical curve is not an invariant of its Jacobian and can be used to distinguish hyperelliptic tropical curve from tropical curves of hyperelliptic type. One can ask whether the tropical Ceresa class is trivial exactly for hyperelliptic tropical curves. Translating this question into topological terms is a bit subtle, because the multitwist $T_\Gamma$ associated to a hyperelliptic tropical curve $\Gamma$ is not necessarily hyperelliptic; that is, there may not be a hyperelliptic involution in the mapping class group which commutes with $T_\Gamma$.  This is related to the issue that there exist hyperelliptic tropical curves which are not tropicalizations of any degenerating algebraic hyperelliptic curve.  Consider, for instance, the curve in \ref{fig:3balloon}.

\begin{figure}[tbh!]
    \centering
    \includegraphics[width=3cm]{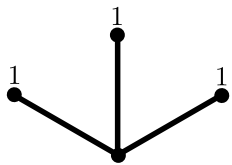}
    \caption{A hyperelliptic tropical curve that is not the tropicalization of a hyperelliptic curve}
    \label{fig:3balloon}
\end{figure}

This is a tropical hyperelliptic curve which is known not to be the tropicalization of a hyperelliptic curve.  The genus-3 mapping class $T_{\Gamma}$ corresponding to this curve is a product of three commuting \textit{separating} Dehn twists; this mapping class is not hyperelliptic, but $J([T_{\Gamma},\tau])$ vanishes for {\em all} $\tau$ since $T_{\Gamma}$ itself lies in the kernel of the Johnson homomorphism.  Indeed, it is easy to describe a hyperelliptic quasi-involution which commutes with $T_{\Gamma}$ not only up to the Johnson kernel, but on the nose: the product of the three Dehn {\em half}-twists about the separating curves.

Nonetheless, the explicit criterion given in \cite[Theorem 4.13]{AminiBakerBrugalleRabinoff} shows that a hyperelliptic tropical curve yields a hyperelliptic mapping class under mild conditions; for instance, it is enough that the underlying graph of $\Gamma$ be $2$-vertex-connected.  So one might ask:  if $\gamma$ is a positive multitwist whose Ceresa class vanishes and whose corresponding graph is $2$-vertex-connected, is $\gamma$ hyperelliptic?
\end{remark}

\subsection*{Acknowledgments}  The first author is partially supported by NSF-RTG grant 1502553 and  ``Symbolic Tools in Mathematics and their Application'' (TRR 195, project-ID 286237555).  The second author is partially supported by NSF-DMS grant 1700884.  The third author is partially supported from the Simons Collaboration on Arithmetic Geometry, Number Theory and Computation.  We are grateful for helpful comments and suggestions from Matt Baker, Benson Farb, Daniel Litt, Andrew Putman, and Bjorn Poonen.

\subsection*{Competing interests}   The authors declare none.

\bibliographystyle{amsplain}
\bibliography{CELbib}

\end{document}